\documentclass
[
    fontsize = 11 pt,
    american,
    captions = tableheading,
    numbers = noenddot,
    abstracton,
    paper = a4paper,
    DIV=13,
    left=1in,
    right=1in,
    top=1in,
    bottom=1in,
]
{scrartcl}

\usepackage[T1]{fontenc}
\usepackage[utf8]{inputenc}
\usepackage
[
    babel = true, 
]
{microtype}           
\usepackage{csquotes} 

\usepackage[dvipsnames]{xcolor} 

\definecolor{stroke1}{HTML}{2574A9} 

\usepackage{amsmath}
\usepackage{amssymb}
\usepackage{amsthm}
\usepackage{thmtools}
\usepackage{mathtools}
\usepackage{thm-restate}
\usepackage{dsfont}        
\usepackage{braceMnSymbol} 

\usepackage
[
    ttscale = 0.85, 
]
{libertine} 
\usepackage
[
    libertine,    
    slantedGreek, 
    vvarbb,       
    libaltvw,     
]
{newtxmath} 
\usepackage{url} 
\usepackage{bm}  

\usepackage{graphicx} 
\usepackage
[
    subrefformat = simple, 
    labelformat = simple,  
]
{subcaption}         

\usepackage{array}     
\usepackage{booktabs}  
\usepackage{longtable} 
\usepackage{pdflscape} 

\usepackage[shortlabels]{enumitem} 

\usepackage
[
    ruled,         
    vlined,        
    linesnumbered, 
]
{algorithm2e} 

\usepackage{xspace}   
\usepackage
[
    shortcuts, 
]
{extdash}             
\usepackage{setspace} 

\usepackage{xparse}    
\usepackage{footnote}  
\usepackage{afterpage} 

\usepackage
[
    sortcites,              
    style = alphabetic,     
    defernumbers,           
    safeinputenc,           
    backref = true,         
    backrefstyle = three,   
    hyperref = true,        
    maxbibnames = 99,       
    mincitenames = 1,       
    maxcitenames = 3,       
    minalphanames = 3,       
]
{biblatex} 

\DefineBibliographyStrings{english}%
{%
    backrefpage  = {\lowercase{s}ee page}, 
    backrefpages = {\lowercase{s}ee pages} 
}

\usepackage{multirow}
\usepackage{caption}
\usepackage
[
    bookmarks = true,                 
    bookmarksopen = false,            
    bookmarksnumbered = true,         
    pdfstartpage = 1,                 
    colorlinks = true,                
    allcolors = stroke1,              
]
{hyperref} 

\usepackage
[
    noabbrev,   
    nameinlink, 
]
{cleveref} 

\date{}

\makeatletter
    \def\IfEmptyTF#1%
    {%
        \if\relax\detokenize{#1}\relax%
            \expandafter\@firstoftwo%
        \else%
            \expandafter\@secondoftwo%
        \fi%
    }
\makeatother

\NewDocumentCommand{\mathOrText}{m}
{%
    \ensuremath{#1}\xspace%
}

\let\originalleft\left
\let\originalright\right
\renewcommand{\left}{\mathopen{}\mathclose\bgroup\originalleft}
\renewcommand{\right}{\aftergroup\egroup\originalright}

\makeatletter
    \DeclareRobustCommand{\bfseries}%
    {%
        \not@math@alphabet\bfseries\mathbf%
        \fontseries\bfdefault\selectfont%
        \boldmath%
    }

\xspaceaddexceptions{]\}}

\allowdisplaybreaks

\crefname{ineq}{inequality}{inequalities}
\creflabelformat{ineq}{#2{\upshape(#1)}#3}

\crefname{term}{term}{terms}
\creflabelformat{term}{#2{\upshape(#1)}#3}

\crefname{cond}{condition}{conditions}
\creflabelformat{cond}{#2{\upshape(#1)}#3}

\crefname{assume}{assumption}{assumptions}
\creflabelformat{assume}{#2{\upshape(#1)}#3}

\let\oldfootnote\footnote

\newlength{\spaceBeforeFootnote} 
\newlength{\spaceAfterFootnote}  

\RenewDocumentCommand{\footnote}{o o o m}%
{%
    \IfNoValueTF{#1}%
    {%
        \oldfootnote{#4}%
    }%
    {%
        \setlength{\spaceBeforeFootnote}{\IfEmptyTF{#1}{0}{#1} em}%
        \IfNoValueTF{#2}%
        {%
            \hspace*{\spaceBeforeFootnote}\oldfootnote{#4}%
        }%
        {%
            \setlength{\spaceAfterFootnote}{\IfEmptyTF{#2}{0}{#2} em}%
            \hspace*{\spaceBeforeFootnote}\IfNoValueTF{#3}{\oldfootnote{#4}}{\oldfootnote[#3]{#4}}\hspace*{\spaceAfterFootnote}%
        }%
    }%
}

\makesavenoteenv{figure}
\makesavenoteenv{table}
\makesavenoteenv{tabular}

\declaretheoremstyle
[
   	spaceabove = \topsep,
   	spacebelow = \topsep,
   	headfont = \bfseries,
   	headformat = \textcolor{stroke1}{$\blacktriangleright$} \NAME~\NUMBER \NOTE,
   	notefont = \bfseries,
   	notebraces = {(}{)},
   	bodyfont = \normalfont,
   	postheadspace = 0.5 em,
   	qed = \textcolor{stroke1}{\bfseries$\blacktriangleleft$},
]
{myTheoremStyle}

\declaretheorem
[
   	style = myTheoremStyle,
   	name = Lemma,
    sharenumber = conjecture,
]
{lemma}
\declaretheorem
[
   	style = myTheoremStyle,
   	name = Corollary,
    sharenumber = conjecture,
]
{corollary}
\declaretheorem
[
   	style = myTheoremStyle,
   	name = Theorem,
    sharenumber = conjecture,
]
{theorem}
\declaretheorem
[
   	style = myTheoremStyle,
   	name = Definition,
    sharenumber = conjecture,
]
{definition}

\NewDocumentCommand{\functionTemplate}{m m m m o}%
{%
    \IfNoValueTF{#5}%
    {%
        \mathOrText{#1\left#2{#4}\right#3}%
    }%
    {%
        \mathOrText{#1#5#2{#4}#5#3}%
    }%
}

\newcommand*{\leftBracketType}{(}
\newcommand*{\rightBracketType}{)}

\NewDocumentCommand{\createFunction}{m m o o}%
{%
    \renewcommand*{\leftBracketType}{\IfNoValueTF{#3}{(}{#3}}%
    \renewcommand*{\rightBracketType}{\IfNoValueTF{#4}{)}{#4}}%
    \NewDocumentCommand{#1}{o o}%
    {%
        \IfNoValueTF{##1}%
        {%
            \mathOrText{#2}%
        }%
        {%
            \functionTemplate{#2}{\leftBracketType}{\rightBracketType}{##1}[##2]%
        }%
    }%
}

\DeclareDocumentCommand{\probabilisticFunctionTemplate}{m m O{} o}
{%
    \functionTemplate{#1}%
    {\lbrack}%
    {\rbrack}%
    {#2\IfEmptyTF{#3}{}{\ \IfNoValueTF{#4}{\left}{#4}\vert\ \vphantom{#2}#3\IfNoValueTF{#4}{\right.}{}}}%
    [#4]%
}

\newcommand*{\N}{\mathOrText{\mathds{N}}}

\newcommand*{\R}{\mathOrText{\mathds{R}}}

\newcommand*{\indicatorFunctionSymbol}{\mathds{1}}

\RenewDocumentCommand{\Pr}{m O{} o}%
{%
    \probabilisticFunctionTemplate{\mathrm{Pr}}{#1}[#2][#3]%
}

\NewDocumentCommand{\E}{m O{} o}%
{%
    \probabilisticFunctionTemplate{\mathrm{E}}{#1}[#2][#3]%
}

\NewDocumentCommand{\Var}{m O{} o}%
{%
    \probabilisticFunctionTemplate{\mathrm{Var}}{#1}[#2][#3]%
}

\DeclareDocumentCommand{\bigO}{m o}%
{%
    \functionTemplate{\mathrm{O}}{(}{)}{#1}[#2]%
}

\DeclareDocumentCommand{\smallO}{m o}%
{%
    \functionTemplate{\mathrm{o}}{(}{)}{#1}[#2]%
}

\DeclareDocumentCommand{\bigTheta}{m o}%
{%
    \functionTemplate{\upTheta}{(}{)}{#1}[#2]%
}

\DeclareDocumentCommand{\bigOmega}{m o}%
{%
    \functionTemplate{\upOmega}{(}{)}{#1}[#2]%
}

\DeclareDocumentCommand{\smallOmega}{m o}%
{%
    \functionTemplate{\upomega}{(}{)}{#1}[#2]%
}

\DeclareDocumentCommand{\eulerE}{o}%
{%
    \mathOrText{\mathrm{e}\IfNoValueTF{#1}{}{^{#1}}}%
}

\DeclareDocumentCommand{\poly}{m o}%
{%
    \functionTemplate{\mathrm{poly}}{(}{)}{#1}[#2]%
}

\createFunction{\id}{\mathrm{id}}

\NewDocumentCommand{\ind}{m o o}%
{%
    \IfNoValueTF{#2}%
    {%
        \mathOrText{\indicatorFunctionSymbol_{#1}}%
    }%
    {%
        \functionTemplate{\indicatorFunctionSymbol_{#1}}{(}{)}{#2}[#3]%
    }%
}

\DeclareDocumentCommand{\dom}{m o}%
{%
    \functionTemplate{\mathrm{dom}}{(}{)}{#1}[#2]%
}

\DeclareDocumentCommand{\rng}{m o}%
{%
    \functionTemplate{\mathrm{rng}}{(}{)}{#1}[#2]%
}

\DeclareDocumentCommand{\d}{o}%
{%
    \mathrm{d}\IfNoValueTF{#1}{}{^{#1}}%
}

\DeclareDocumentCommand{\set}{m m o}%
{%
    \mathOrText{\IfNoValueTF{#3}{\left}{#3}\{#1\ \IfNoValueTF{#3}{\left}{#3}\vert\ \vphantom{#1}#2\IfNoValueTF{#3}{\right.}{}\IfNoValueTF{#3}{\right}{#3}\}}%
}

\newcommand*{\indicator}[1]{\mathOrText{\mathbf{1}{\{#1\}}}}

\DeclareDocumentCommand{\randomProcess}{m o}
{%
    \mathOrText{X^{(#1)}\IfNoValueTF{#2}{}{_{#2}}}%
}

\DeclareDocumentCommand{\transformedProcess}{o}
{%
    \mathOrText{Y\IfNoValueTF{#1}{}{_{#1}}}%
}

\DeclareDocumentCommand{\filtration}{o}
{%
    \mathOrText{\mathcal{F}\IfNoValueTF{#1}{}{_{#1}}}%
}

\newcommand*{\numberOfVertices}{\mathOrText{n}}

\newcommand*{\contactProcess}{\mathOrText{C}}

\newcommand*{\timeContinuous}[1]{\mathOrText{\tau_{#1}}}

\DeclareDocumentCommand{\lyapunovHelper}{o}
{%
    \mathOrText{f\IfNoValueTF{#1}{}{(#1)}}%
}

\DeclareDocumentCommand{\lyapunovFunction}{m o}
{%
    \mathOrText{F\IfNoValueTF{#2}{}{(#2)}}%
}

\DeclareDocumentCommand{\potentialFunctionIMinusR}{o}
{%
    \mathOrText{H\IfNoValueTF{#1}{}{(#1)}}%
}

\newcommand*{\IRIR}{\mathOrText{\text{IRIR}}}
\newcommand*{\Ia}[1]{\mathOrText{I_{1,#1}}}
\newcommand*{\Ib}[1]{\mathOrText{I_{2,#1}}}
\newcommand*{\Ra}[1]{\mathOrText{R_{1,#1}}}
\newcommand*{\Rb}[1]{\mathOrText{R_{2,#1}}}
\newcommand*{\da}[1]{\mathOrText{\delta_{1,#1}}}
\newcommand*{\db}[1]{\mathOrText{\delta_{2,#1}}}
\newcommand*{\la}{\mathOrText{\lambda_1}}
\newcommand*{\lb}{\mathOrText{\lambda_2}}
\newcommand*{\lae}{\mathOrText{\lambda_1^*}}
\newcommand*{\lbe}{\mathOrText{\lambda_2^*}}
\newcommand*{\ca}{\mathOrText{c_1}}
\newcommand*{\cb}{\mathOrText{c_2}}
\newcommand*{\ra}{\mathOrText{\varrho_1}}
\newcommand*{\rb}{\mathOrText{\varrho_2}}
\newcommand*{\Iae}{\mathOrText{I_1^*}}
\newcommand*{\Ibe}{\mathOrText{I_2^*}}
\newcommand*{\Rae}{\mathOrText{R_1^*}}
\newcommand*{\Rbe}{\mathOrText{R_2^*}}
\newcommand*{\xa}{\mathOrText{x_1}}
\newcommand*{\xb}{\mathOrText{x_2}}
\newcommand*{\ya}{\mathOrText{y_1}}
\newcommand*{\yb}{\mathOrText{y_2}}
\newcommand*{\Lyapunov}{\mathOrText{F}}
\newcommand*{\Lyapunovb}{\mathOrText{K}}
\newcommand*{\Lyapunovc}{\mathOrText{L}}
\newcommand*{\density}{\mathOrText{p}}

\newcommand*{\erdosRenyi}{Erd\H{o}s--Rényi\xspace}

\usepackage[utf8]{inputenc}

\title{\LARGE A Tight Epidemic Threshold for Competing Stochastic Infection Processes with Mutually Exclusive Immunity}
\author{%
    Nicolas Klodt$^{*}$ \and Martin~S. Krejca$^{\dagger}$
}

\publishers
 {
  	\footnotesize $^{*}$ Hasso Plattner Institute, University of Potsdam, Potsdam, Germany\\
  	nicolas.klodt@hpi.de \\

  	$^{\dagger}$ LIX, CNRS, Ecole Polytechnique, Institut Polytechnique de Paris, Palaiseau, France \\
  	martin.krejca@polytechnique.edu
 }

\usepackage{graphicx}
\usepackage{mathtools}
\usepackage{amsmath}
\usepackage{amssymb}
\usepackage{enumitem}
\usepackage{amsthm}
\usepackage{hyperref}
\usepackage{svg}

\newcommand\numberthis{\addtocounter{equation}{1}\tag{\theequation}}

\begin{document}

\maketitle

\vspace*{-2\baselineskip}
\begin{abstract}
    Stochastic infection processes are continuous-time Markov chains on graphs that assign each vertex one of multiple states, such as \emph{susceptible}, \emph{infected}, or \emph{recovered}.
Depending on the model, vertices changes their state based on random transition rates and the states of their neighbors, resulting in a variety of complex dynamics.
The body of rigorous literature is rich for processes that consider a single infection, that is, processes where each state is unique.
In contrast, the setting with at least two infections, where the same state exists for different types, allows for far more transition combinations, leaving several interesting models entirely unexplored.
One such model is the one where vertices are immune to only one of the infections and lose this immunity upon infection by another one.

We address this shortcoming in the literature by defining the IRIR process, in which two SIR processes run on the same graph and each vertex is immune only to its most recent infection.
We study the \emph{survival time} of the \IRIR process, that is, the time until no infected vertex remains, with mathematical rigor.
Our main result is a tight threshold, known as \emph{epidemic threshold}, where the survival time rapidly changes from at most quasi-linear in the graph size~$n$ to at least super-polynomial in~$n$.
This result is applicable to perfectly mixed graphs, which are graphs where the density of edges between each non-empty subset of vertices is a given value $\density \in (0, 1]$.
Formally, denoting for infection type $i \in \{1, 2\}$ its infection rate by~$\lambda_i$ and its recovery rate by~$\varrho_i$, we show that the survival time is with high probability $2 \exp(\bigOmega{n / \ln(n)})$ if $\frac{\ra}{\la \density} + \frac{\rb}{\lb \density} < (1-c)n$, and that it is \bigO{n \ln(n)} if $\frac{\ra}{\la \density} + \frac{\rb}{\lb \density} > (1+c)n$, for a suitable constant $c \in \R_{> 0}$.
Our super-polynomial lower bound extends to jumbled graphs, which allow for some more flexibility in the density.
In particular, this includes with high probability \erdosRenyi graphs with an average degree of $k\in \smallOmega{\ln^2(n)}$.

Our proof for the lower bound is based on a \emph{potential} that transforms the configurations of the \IRIR process to a supermartingale with drift in a large region, implying the lower bound.
We detail how to systematically derive such a potential, based on a Lyapunov function of the transition equilibrium of the process.
We believe that this derivation is applicable to various other stochastic infection processes as well and thus constitutes a major part of our contribution.
\end{abstract}

\section{Introduction}
A large and important area in theoretical computer science considers stochastic processes on graphs driven by interactions among the different states of vertices.
Over the last decades, the analysis of such processes has led to various strong guarantees.
Examples include rumor spreading~\cite[e.g.,][]{KarpSSV00,DoerrFF11}, load balancing~\cite[e.g.,][]{FriedrichS09,CaiS17}, voter models~\cite[e.g.,][]{CooperEOR13,BerenbrinkGKMT16}, percolation~\cite[e.g.,][]{BaloghP07,FalgasRSarkar23}, and evolutionary dynamics~\cite[e.g.,][]{DiazGMRSS14,Galanis0GLR17}.
Stochastic infection processes~\cite{pastor2015epidemic} comprise another important domain that falls into this category and that has been studied with similar rigor.

Stochastic infection processes are motivated by epidemiology and assume that each vertex of a given graph is in one of several states, such as susceptible to an infection~(S), infected~(I), or recovered~(R) and thus immune to an infection.
The dynamics are governed by random rates that determine when vertices transition in continuous time from one state to another, based on the states of their neighbors.
Following a common naming convention, the order of possible transitions is typically reflected by the order of the abbreviation letters in the name of the process, with a repeated letter indicating a loop.
Well-known processes are the SIR process, which results in a type of percolation process, the SIS process, also known as contact process, and the SIRS process.
We note that processes with loops and thus repeated configurations lead to vastly different dynamics from processes without loops.
Thus, in the remainder, we refer to stochastic infection processes only if they allow for repeated configurations, as other models are also covered by other models, such as SIR and percolation.

Since stochastic infection processes usually have exactly one absorbing state, which is quickly reached once no infected vertices remain, an important quantity of a process is the random time that it takes to reach such an absorbing state.
This time is the \emph{survival time} of the process, and it has been subject to extensive research.
Many results work under idealized assumptions, known as mean-field theory, and study approximations of the exact processes.
Nonetheless, popular models have also been studied in full rigor.
For example, the SIS process has been analyzed both for infinite graphs~\cite[e.g.,][]{Harris74,Liggett96InfiniteBinaryTrees,NamNS22SISinfinite} and finite graphs~\cite[e.g.,][]{berger2005spread,ganesh2005effect,BorgsCGS10Antidote}.
More recently, the SIRS process has received more attention and was also studied in full rigor, on finite graphs,~\cite[e.g.,][]{FrGoKlKrPa2024,LamNY25,GoebelKKP25}.

Most of these results showcase an \emph{epidemic threshold}, that is, a value of the model's parameters---typically the infection rate---at which the survival time suddenly changes from at most logarithmic to at least super-polynomial, with high probability.
These epidemic thresholds are usually closely related to the structure of the host graph of the process, and the various results paint a rich picture of graph properties that result in a low or high survival time.
However, this richness of rigorous results only exists for processes that consider a single infection.
For two or more infections, the landscape of fully rigorous results is scarcer, as we detail in \Cref{sec:relatedWork}.
One substantial reason for this scarcity is the drastic increase in complexity when studying multiple infections, as there are many different choices with respect to what interactions are sensible.
In particular, the interesting scenario of multiple infections with mutually exclusive immunity has not been studied with mathematical rigor at all so far, to the best of our knowledge.
Such mutual exclusion shows up in certain contexts, for example, in the (helpful) Welchia virus that patches a system and thus changes its vulnerability, or as seen by crop rotation, where planting the same crop repeatedly increases the infection risk, making it likely to change the crop and allowing the soil to recover from the former crop but increasing the infection risk of the latter~\cite{ZhouEtAl23,IslamOLKMW24}.

\vspace*{-1 ex}
\paragraph{Our Contribution}
We introduce a new infection process for two competing infections with mutually exclusive immunity, called \emph{\IRIR}, and we locate with mathematical rigor its epidemic threshold on perfectly mixed graphs, which resemble weighted cliques.
To this end, we meticulously derive a suitable potential that transforms the complex configuration space into a supermartingale with drift, which we believe is a value contribution of this work and applicable to other processes as well.
We also provide estimates for the dynamics of a generalization of the \IRIR process to more than two infections.

\textbf{The \IRIR model.}
In the IRIR process, two SIR processes\footnote{In an SIR process, vertices transition from susceptible to an infection to infected to recovered and thus immune to the initial infection. Once recovered, a vertex remains in this state indefinitely.} are present on the same graph simultaneously.
We assume that each vertex can only be infected by one infection at a time and can also only be immune to one infection at a time, namely that one from which it recovered.
We note that when an immune vertex is infected (by the other infection), this overwrites the immunity of the vertex entirely.
Hence, the susceptible state of one infection is matched to the recovered state of the other infection. This property of overwriting the immunity resembles more an SIRS process\footnote{In an SIRS process, recovered vertices become susceptible again after a certain random time, allowing them to be reinfected.} than an SIR process, as the reinfections allow the infection to survive for a longer time.
Since it is impossible in our model to enter a state in which a vertex is susceptible to both infections, we drop this state for simplicity and assume that every vertex starts either infected or immune to one of the infections, hence the abbreviation \IRIR.

More detailed, the IRIR process is a continuous-time Markov chain on a graph that partitions the vertices into two infected and two recovered states---one of each state for either infection. Recovered vertices are immune to their own infection while being susceptible to the other one. For infection $i \in \{1,2\}$, infected vertices transition into the recovered state following an exponential clock with rate\footnote{We note that~$\varrho$ is traditionally used for R-to-S transitions in SIRS processes, and the recovery rate (from~I to~R) is normalized to~$1$. Since we have no R-to-S transitions in \IRIR but two recovery rates, we use~$\varrho$ for this transition instead.}~$\varrho_i$, and they infect adjacent vertices susceptible to~$i$, that is, recovered vertices of the other type, at rate~$\lambda_i$.

\textbf{The epidemic threshold.}
We analyze the \IRIR process on perfectly mixed graphs with $n \in \N_{\geq 1}$ vertices, where between any two disjoint subsets of vertices there are exactly as many edges as predicted given a density $\density \in (0,1]$. We show for constant recovery rates and infection rates in $\bigTheta{(n\density)^{-1}}$ that the survival time of an \IRIR process is with high probability super-polynomial if $\frac{\ra}{\la \density} + \frac{\rb}{\lb \density} < (1-c)n$ for some constant $c \in \R_{>0}$ (\Cref{thm:clique}). We complement this result with an upper bound of $\bigO{n \ln(n)}$ time if $\frac{\ra}{\la \density} + \frac{\rb}{\lb \density} > (1+c)n$ (\Cref{thm:fastDieOut}). Combined, these two results establish a threshold behavior that is very similar to the behavior of the SIS process, where the expected survival time changes from logarithmic to super-polynomial around $\frac{\varrho}{\lambda \density} = n$~\cite{ganesh2005effect}.

We extend the super-polynomial lower bound to jumbled graphs (\Cref{cor:jumbledContinuous}), which are a relaxation of perfectly mixed graphs. In particular, we consider $(p,p n/k)$-jumbled graphs for $k\in \smallOmega{\ln(n)}$, which bound the difference between the actual number of edges between vertex subsets and their expectation. With high probability, \erdosRenyi graphs with an average vertex degree of $\smallOmega{\ln^2(n)}$ are jumbled graphs in this sense, making our lower bound immediately applicable (\Cref{cor:ErdosSurvival}).

\textbf{Finding a suitable potential.}
Our proof of the super-polynomial lower bound is involved.
At its core lies a concentration result for supermartingales with drift (\Cref{pre:negativeDrift}), stemming from drift analysis~\cite{Lengler20BookChapter}.
To this end, we transform the trajectory of the \IRIR process---consisting of partitions of the vertices---into a real-valued random process via a \emph{potential} that allows to apply drift analysis.

We note that while this approach is not uncommon, the process of finding a suitable potential is challenging.
In this work, we discuss in detail how we systematically obtain such a suitable potential, which is, to the best of our knowledge, a novel and substantial contribution.
We believe that our strategy is applicable to other processes and thus serves as a good starting point for investigating other processes.

\textbf{Dynamics for more than two infections.}
Last, we analyze the dynamics of a generalization of the \IRIR process to more than two infections.
To this end, we calculate the transition equilibrium values for the larger system and argue which of the infections are likely to survive in such an equilibrium.

\vspace*{-1 ex}
\paragraph{Outline}
In \Cref{sec:relatedWork}, we discuss related work on stochastic infection processes with at least two infections.
We then define the \IRIR process fully formally in \Cref{sec:preliminaries} and present our notation as well as our mathematical tools.
Afterward, we present and discuss our main results in \Cref{sec:mainResult}.
In particular, we detail how we derive and apply our potential in \Cref{sec:model,sec:drift}.
In \Cref{sec:multipleInf}, we discuss our results for more than two infections.
Last, we provide an outlook in \Cref{sec:outlook}.

\section{Related Work}
\label{sec:relatedWork}
We discuss the novelty of our work with respect to existing articles.
Briefly put, the \IRIR process has not been studied before, and similar models were mostly only studied via mean-field, that is, approximation, approaches, not in full mathematical rigor.
Similarly, while our general proof strategy is not uncommon, we are not aware of precise instructions for deriving suitable potentials systematically.

\textbf{Novelty of the \IRIR process.}
To the best of our knowledge, the \IRIR process has not been studied before. However, its properties have similarities to processes studied in prior work. Hence, in the following, we outline several properties of our model and examples of works that studied similar properties. We start by presenting work on processes with multiple infections that compete or cooperate or do both. We then provide an example of a paper that also considers a process where vertices only memorize the last infection. Last, we relate our process to predator--prey systems, which have similar cyclic dynamics.

Processes with multiple infections received a lot of attention in general. \Textcite{kucharski2016capturing} give an overview on the research on multi-strain models in which multiple strains of the same infection spread through a network, where being infected or recovered from one strain gives (partial) immunity to other strains. The authors motivate the different models using real-world examples and discuss the advantages and disadvantages of certain modeling decisions. However, in terms of mathematical rigor, most papers in this area use mean-field approaches in order to calculate the equilibrium states and threshold values for when new equilibria arise.

The dynamics of the IRIR process result in infections that are cooperative in the sense of one infection infecting vertices being purely positive for the other infection, as the susceptible pools are disjoint and grow by recovering from the other infection. In that sense, there has been previous work on cooperative infections that help each other in different ways. For example, \textcite{chen2013outbreaks} consider two SIR-type infections that infect vertices on the same graph, and being infected by one infection increases the probability to be infected by the other one. The authors show phase transitions via mean-field theory.

Another work that studies cooperation is by \textcite{pinotti2020interplay}. The authors analyze a system of three infections where one of them cooperates with the other two, while the other two are in competition with each other. The focus is on the effect of having both cooperation and competition in the same model. The analysis uses mean-field calculations and simulations in order to calculate a phase diagram. The system dynamics are related to our extension of the \IRIR process to more than two infections.
The difference is that our \IRIR generalization results naturally in an interplay of cooperation and competition, whereas the model by \textcite{pinotti2020interplay} enforces each individually.

The \IRIR process can be seen as two SIR infections where vertices only remember the last infection and are thus immune to it. \textcite{guerin2025stochastic} study a similar infection model and analyze the impact of having memory of the last infection. However, in their model, each infected individual has a \emph{trait} parameter, which changes with each infection, and the age of the infection impacts its effectivity.

Because infections in the \IRIR process only infect vertices that recovered from the other infection, vertices cycle alternatingly through the two infected and recovered states. This is similar to cyclic predator--prey systems.
The main difference is that predator--prey models feature a high degree of asymmetry, with the predators hunting the prey, and both populations are subject to death-and-life-cycles.
Predator--prey models have been studied extensively, for example, by \textcite{he2012relationship} and \textcite{durrett1998spatial} as well as the references therein. \textcite{he2012relationship} showed that cyclic three-species predator--prey systems are closely related to the two-species Lotka--Volterra model, which has also been studied extensively. Most papers on both models describe the dynamics over differential equation systems and analyze the existence and stability of equilibria in the mean-field model. In this regard, the analysis of these processes is very close to how SI-type processes are commonly analyzed. Moreover, there are some fully rigorous results, mostly on underlying latices, which investigate the differences of the exact process to the mean-field approximation~\cite[e.g.,][]{tauber2012population}. In particular in the cyclic predator--prey model, the stochastic process can behave quite differently to the mean-field approximation, as for example discussed by \textcite{szolnoki2014cyclic}. The more complex the models are, the more impact local interactions can have on the global behavior of the dynamics. This highlights the importance of rigorous analysis of such processes that goes further than using mean-field assumptions.

\textbf{Novelty of our analysis.}
Using mean-field theory for the analysis of SIS-type infections is very common.
Similarly, Lyapunov functions are regularly used for showing global stability of equilibrium states. Among such works, closest to our analysis is the work by \textcite{FrGoKlKrPa2024}, who extended SIRS mean-field calculations by \textcite{korobeinikov2002lyapunov} to fully rigorous results for expander graphs. \textcite{FrGoKlKrPa2024} claim that their adjustments made to the potential should also work for similar models. For the \IRIR process, this is not the case as the increased number of parameters is not captured by the approach by \textcite{FrGoKlKrPa2024}. Hence, in this article, we provide in \Cref{sec:model} step-by-step instructions for how to build the Lyapunov function, explaining how the adjustments change the function's properties, why they are needed for the drift analysis, and how several parameters are incorporated.
Such detailed and systematic instructions are novel, to the best of our knowledge.

\section{Preliminaries}
\label{sec:preliminaries}
We first introduce general notation and terminology.
Then, we formally define the IRIR process, which is at the core of this article.
Afterward, we define the graph classes that we analyze, namely, \emph{perfectly mixed} and \emph{jumbled} graphs.
Last, we briefly discuss \emph{drift analysis}, which is the main mathematical tool we use.

\vspace*{-1 ex}
\paragraph{General notation.}
We let~$\N$ denote the set of all natural numbers, including~$0$, and we let~$\R$ denote the set of all reals.
For all $a, b \in \N$, let $[a .. b] \coloneqq [a, b] \cap \N$.

A graph is a finite, undirected graph without self-loops.
We reserve the variable $n \in \N_{\geq 1}$ for the number of vertices of a graph, and we consider all big-O notation to be asymptotic in this~$n$.
In particular, if we refer to a variable as a \emph{constant}, it is a value independent of~$n$.

A \emph{Poisson process of rate $\lambda \in \R_{> 0}$} is a one-dimensional Poisson point process of rate~$\lambda$ that outputs a random subset~$S$ of non-negative real numbers.
This subset~$S$ is almost surely countably infinite, and the difference between each two consecutive numbers in~$S$ follows an exponential distribution with parameter~$\lambda$, independent of all other random decisions.
We call Poisson point processes \emph{clocks} and say that the clock \emph{triggers} at the times in the returned subset~$S$.

\vspace*{-1 ex}
\paragraph{The IRIR process.}
The IRIR process is a continuous-time Markov chain over a graph that models two competing infections.
Each vertex is either infected by exactly one of the two infections or is recovered from exactly one of the two infections and thus immune to the infection from which it recovered.

Formally, given a graph $G = (V, E)$ as well as two \emph{infection rates} $\la, \lb \in \R_{>0}$, two \emph{recovery rates} $\ra, \rb \in \R_{>0}$,\footnote{~We note that, traditionally, the recovery rate is normalized to~$1$ in stochastic infection processes. However, since we have two recovery rates that may differ, we use explicit variables for both rates.} and an initial partition of~$V$ into the sets $I_{1,0}', I_{2,0}', R_{1,0}'$, and $R_{2,0}'$, an IRIR process $C=(C_t)_{t\in \R_{\geq 0}}$ is a continuous-time Markov chain over partitions of~$V$.

At each point in time $t \in \R_{\geq 0}$, the configuration~$C_t$ represents a partition of~$V$ into four sets:
the \emph{infected} sets $\Ia{t}', \Ib{t}'$ of infection~$1$ and~$2$, respectively, and the \emph{recovered} sets $\Ra{t}', \Rb{t}'$ of infection~$1$ and~$2$, respectively.
That is $C_t=(\Ia{t}',\Ib{t}',\Ra{t}',\Rb{t}')$.
Vertices that are recovered from one infection are susceptible to infection by the other and only the other infection.

Vertices change states following events triggered by clocks.
For each edge $e \in E$, we have two \emph{infection} clocks $M_{1,e}$ and $M_{2,e}$ with rates $\la$ and $\lb$, respectively.
For each vertex $v \in V$, we have two \emph{recovery} clocks $N_{1,e}$ and $N_{2,e}$ with rates $\ra$ and $\rb$, respectively.
Let~$Z$ be the union of all clocks, and let~$P$ be the stochastic process in which all clocks in~$Z$ run simultaneously and independently, starting at time~$0$.
Note that, almost surely, no two clocks trigger at the same point in time.
Last, we index the increasing sequence of all trigger times in~$P$ by $\{\gamma_i\}_{i\in\N}$, with $\gamma_0=0$.

Recall that configuration~$C_0$ is part of the input of~$C$.
The other configurations are defined inductively and only change when clocks trigger.
That is, for all $i \in \N$, the process~$C$ is constant on $[\gamma_i, \gamma_{i+1})$.
The transitions are defined for all $i \in \N$ and $s \in [\gamma_i, \gamma_{i + 1})$ as follows:
\begin{itemize}
      \item \textbf{Susceptible to infected.}
            Let $j \in \{1,2\}$, $e = \{u, v\} \in E$ with $\gamma_{i + 1} \in M_e$ and $u \in I_{j,s}'$, as well as $v \in R_{3-j,s}'$.
            Note that~$j$ and $3-j$ are the two different infections, hence,~$v$ is susceptible to infection~$j$.
            Then for all $t \in [\gamma_{i + 1}, \gamma_{i + 2})$, we have $u, v \in I_{j,t}'$.
            We say that \emph{$u$ infects~$v$} (at time~$\gamma_{i + 1}$).
      \item \textbf{Infected to recovered.}
            Let $j \in \{1,2\}$ as well as $v \in V$ with $\gamma_{i + 1} \in N_v$ and $v \in I_{j,s}'$.
            Then for all $t \in [\gamma_{i + 1}, \gamma_{i + 2})$, we have $v \in \R_{j,t}'$.
            We say that \emph{$v$ recovers} (at time~$\gamma_{i + 1}$).
\end{itemize}
The transitions above comprise all that occur.
However, some triggers do not result in a change of state, for example, an infection trigger between two recovered vertices.
Hence, in the remaining paper, we only consider times at which state changes occur.
That is, we consider $\{\gamma_0\} \cup \{\gamma_i \mid i \in \N_{\geq 1} \land \contactProcess_{\gamma_i} \neq \contactProcess_{\gamma_{i - 1}}\}$, which we index by the increasing sequence $\{\timeContinuous{i}\}_{i \in \N}$.
For all $i \in \N$, we call $\timeContinuous{i}$ the $i$-th \emph{step} of~$C$.

An important quantity in our analysis is the number of vertices in each state.
To this end, let $\Ia{t} \coloneqq |\Ia{t}'|$, $\Ib{t} \coloneqq |\Ib{t}'|$, $\Ra{t} \coloneqq |\Ra{t}'|$, and $\Rb{t} \coloneqq |\Rb{t}'|$.
Our analysis focuses on the \emph{survival time} $T\coloneqq \inf\{t \in \R_{\geq 0}\mid \Ia{t}= \Ib{t} = 0\}$ of $C$, and we say that the infection \emph{dies out} at~$T$.

\vspace*{-1 ex}
\paragraph{Perfectly mixed and jumbled graphs.}
Given a weighted graph $G = (V, E)$ and a \emph{density} $\density \in (0, 1]$, we say that~$G$ is \emph{perfectly mixed} if and only if for all non-empty disjoint subsets of vertices $U,W \subset V$, the sum of edge weights of edges between $U$ and $W$ is exactly $\density \cdot |U|\cdot |W|$.
Note that since this includes all singleton sets for~$U$ and~$W$, this implies that each pair of vertices is connected by an edge with weight~$\density$.
Hence, a perfectly mixed graph is a weighted clique, in which all edges have weight $\density$.

When running an IRIR process~$C$ on a weighted graph, we multiply the infection rates $\la, \lb$ by the weight of the edge to get the rate of the corresponding clock.
Note that on perfectly mixed graphs, all infection clocks have rates $\lae \coloneqq \la \density$ and $\lbe \coloneqq \lb \density$, respectively, which we call the \emph{effective infection rates}.
Furthermore, note that an infection on a perfectly mixed graph with density $\density$ and infection rates $\la$ and~$\lb$ is equivalent to an infection on a clique with infection rates $\lae$ and $\lbe$.

Moreover, we consider \emph{jumbled graphs}~\cite{thomason1987pseudo}, which are graphs that are close to being perfectly mixed.
Formally, let $G=(V,E)$ be a graph, $p \in (0,1)$, and $\alpha \in \R_{\geq 1}$. For a graph~$H$, let $e(H)$ denote its number of edges and~$|H|$ its number of vertices.
We call $G$ \emph{$(p,\alpha)$-jumbled} if and only if for each induced subgraph~$H$ of~$G$, we have $\bigl|e(H)-p\cdot\binom{|H|}{2}\bigr| \leq \alpha |H|$. In a sense, jumbled graphs are a de-randomized version of \erdosRenyi graphs $G(n,p)$, which are almost surely $(p',\bigO{\sqrt{np}})$-jumbled (\Cref{lem:ErdosJumbled}).

\vspace*{-1 ex}
\paragraph{Drift analysis.}
Drift analysis~\cite{Lengler20BookChapter} is a collection of mathematical tools for bounding the expected stopping times of real-valued randomized processes.
This terminology dates back to an article by \textcite{HeY03}, based on a paper by \textcite{Hajek82}.

At its core, drift analysis is a rephrasing of well-known results for super- and submartingales, such as the optional-stopping theorem.
The analysis is applicable to any real-valued process whose expected change is bounded in a certain way, with this bound usually being referred to as the \emph{drift} of the process.

In this article, we mainly make use of \emph{negative} drift.
That is, we consider a random process~$X$ that strictly decreases in expectation as long as it did not it reach a sufficiently large value, where we stop~$X$.
The following drift theorem, based on \textcite[Theoreom~$3$]{Koetzing16}, provides an upper bound on the probability of~$X$ being stopped before an exponential time passes.
In its essence, the theorem is an application of the Azuma--Hoeffding inequality to sub-Gaussian supermartingales.

\begin{restatable}[Negative Drift, {\protect\cite[Corollary~3.24]{krejca2019theoretical}}]{theorem}{negativeDrift}
      \label{pre:negativeDrift}
      Let $(X_t)_{t\in \N}$ be an integrable random process over~$\R$ adapted to a filtration $(\filtration_t)_{t \in \N}$.
      Furthermore, let $X_0\leq 0$, $b \in \R_{>0}$, and $T = \inf\{t \in \N \mid X_t \geq b\}$.
      Suppose that there are values $a \in \R_{\leq 0}$, $c \in (0,b)$, and $\varepsilon \in \R_{<0}$ such that for all $t \in \N$, we have
      \begin{enumerate}
            \item\label[cond]{item:negativeDrift:drift}
                  $\E{X_{t+1}-X_t}[\filtration_t] \cdot \indicator{X_t\geq a \land t < T} \leq \varepsilon \cdot \indicator{X_t\geq a \land t < T}$,
            \item\label[cond]{item:negativeDrift:boundedStepSize}
                  $|X_t-X_{t+1}|\cdot \indicator{X_t\geq a} < c \cdot \indicator{X_t\geq a \land t < T} + \indicator{X_t < a \lor t \geq T}$, and
            \item\label[cond]{item:negativeDrift:noLargeJumps}
                  $X_{t+1}\cdot \indicator{X_t< a \land t < T}\leq 0$.
      \end{enumerate}
      Then for all $t\in \N$, we have $\Pr{T\leq t} \leq t^2 \cdot \exp\bigl(-\frac{b|\varepsilon|}{2c^2}\bigr)$.
\end{restatable}

An important property of \Cref{pre:negativeDrift}, using the theorem's notation, is that we only require drift for the regime $[a, b)$ and not over the entire interval $(-\infty, b)$, as the bounded step size prevents the process from stopping earlier anyway once it takes values less than~$a$, due to \cref{item:negativeDrift:noLargeJumps}.

\section{Bounds on the Survival Time of the IRIR Process}
\label{sec:mainResult}
Our main result is \Cref{cor:jumbledContinuous}, which shows that the \IRIR process on a jumbled graph~$G$ of density $\density \in \R_{>0}$ survives with high probability for a time that is at least super-polynomial in the number $n \in \N_{\geq 1}$ of the vertices of~$G$ if the recovery rates are constant, the infection rates are in the order~$\frac{1}{\density n}$, and the process starts with a linear fraction of infected vertices of each infection.

\begin{restatable}{corollary}{jumbledContinuous}
    \label{cor:jumbledContinuous}
    Let $n \in \N_{\geq 1}$, $k \in \R_{>0}$ with $k \in \smallOmega{\ln(n)}$, $p \in (0,1]$ (possibly dependent on~$n$), and let~$C$ be an \IRIR process on a $(p,p n/k)$-jumbled graph~$G$ with~$n$ vertices.
    Let~$C$ have constant recovery rates $\ra,\rb \in \R_{>0}$ and effective infection rates $\lae=\ca/n$ and $\lbe=\cb/n$, for constants $\ca,\cb \in \R_{>0}$.
    Furthermore, let $c\in \R_{>0}$ be a constant such that $\frac{\ra}{\lae} + \frac{\rb}{\lbe} < (1-c)n$, and let $\varepsilon_h \in \R_{>0}$ be a sufficiently small constant such that $\Ia{0}\geq \varepsilon_h n$ and $\Ib{0}\geq \varepsilon_h n$.
    Last, let~$T$ be the survival time of~$C$.
    Then there exists a constant $c_T \in \R_{>0}$ such that for sufficiently large~$n$, we have $\Pr{T\leq \exp\bigl(\frac{c_T n}{8 \ln(n)}\bigr)}[][\big] \leq 2 \exp\bigl(-\frac{c_T n}{2\ln(n)}\bigr)$.
\end{restatable}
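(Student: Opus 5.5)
Our plan is to reduce the jumbled case to the perfectly mixed case. We build a potential from the mean-field equilibrium of the \IRIR dynamics and then apply \Cref{pre:negativeDrift} to the embedded jump chain $(C_{\timeContinuous{i}})_{i\in\N}$. The jumbledness enters only as an additive error in the transition rates, and this error is $\smallO{1}$ relative to the drift.

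\textbf{Equilibrium and potential.} Write $x_i=I_i/n$ and $y_i=R_i/n$ with $x_1+x_2+y_1+y_2=1$. On a perfectly mixed graph, an infection of type~$1$ occurs at total rate $\lae \Ia{t}\Rb{t}$, and a recovery of type~$1$ at rate $\ra\Ia{t}$ (analogously for type~$2$). The mean-field equilibrium balances each infection flow against its recovery flow:
\begin{itemize}
    \item $\lae \Rbe=\ra$ and $\lbe\Rae=\rb$, so $\Rbe=\ra/\lae$ and $\Rae=\rb/\lbe$;
    \item $\ra\Iae=\rb\Ibe$, with $\Iae+\Ibe=n-\Rae-\Rbe>cn$ by the assumption $\frac{\ra}{\lae}+\frac{\rb}{\lbe}<(1-c)n$.
\end{itemize}
Hence $\Iae$ and $\Ibe$ are linear in~$n$.

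Following the Korobeinikov-type construction, we take a Lyapunov function
\[
\Lyapunov=\sum_i w_i\Bigl(I_i-\Iae-\Iae\ln\tfrac{I_i}{\Iae}\Bigr)+\sum_i w_i'\Bigl(R_i-\Rae_i-\Rae_i\ln\tfrac{R_i}{\Rae_i}\Bigr),
\]
where $\Rae_1=\Rae$ and $\Rae_2=\Rbe$. The weights $w_i,w_i'$ are chosen so that the mean-field time derivative is $\leq 0$, and strictly negative, bounded away from~$0$, outside a ball of radius $\delta n$ around the equilibrium. Near the boundary $I_i\to 0$, the logarithm makes $\Lyapunov$ blow up, but only logarithmically. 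We therefore use a modified potential: a suitable increasing concave transform $X_i=\phi(\Lyapunov(C_{\timeContinuous{i}}))-\text{const}$. Normalizing by the total transition rate, which is $\Theta(n)$ as long as $I_1,I_2=\Theta(n)$, yields the following properties:
\begin{itemize}
    \item drift $\varepsilon=-\Omega(1/n)$ per step in the region $a\le X<b$;
    \item step size $c=\bigO{\ln n / n}$-ish after scaling;
    \item $b=\Theta(1)$, corresponding to some $I_i$ falling below $\varepsilon_h n$, or a fixed level set of $\Lyapunov$ containing the start.
\end{itemize}
Accounting for the logarithmic loss in step size near small $I_i$ is what produces the $n/\ln n$ exponent.

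\textbf{Jumbled correction.} On a $(p,pn/k)$-jumbled graph, the number of edges between $I_1'$ and $R_2'$ deviates from $p\Ia{}\Rb{}$ by at most $\bigO{pn\cdot n/k}$. After multiplying by $\la=\ca/(pn)$, the type-$1$ infection rate therefore differs from the mean-field value by $\bigO{n/k}=\smallO{n/\ln n}\cdot\ln n/\ln n$, which is $\smallO{n}$. Since the drift of $\Lyapunov$ in continuous time is $-\Omega(n)$ outside the equilibrium ball, these perturbations do not destroy the drift for large~$n$. The condition $k\in\smallOmega{\ln n}$ is exactly what keeps the error below the drift scale that survives the $\ln n$ normalization.

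\textbf{Concluding.} \Cref{pre:negativeDrift} gives $\Pr{\text{stop within } t \text{ steps}}\le t^2\exp(-b|\varepsilon|/(2c^2))=t^2\exp(-c_Tn/\ln n)$. We choose $t=\exp(c_Tn/(4\ln n))$ steps. A Poisson/Chernoff bound shows that in time $\exp(c_Tn/(8\ln n))$ there are at most $t$ steps, except with probability $\exp(-\Omega(n))$, since the total rate is $\bigO{n}$. Together these give the stated bound $2\exp(-c_Tn/(2\ln n))$. As long as the process is not stopped, $I_1,I_2\ge \varepsilon_h n>0$, so $T$ exceeds this time.

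\textbf{Main obstacle.} The main obstacle is constructing the potential and verifying a uniform negative drift with bounded steps. The plain Lyapunov function has vanishing drift near equilibrium and unbounded steps near the boundary. The $[a,b)$ window of \Cref{pre:negativeDrift} is used to exclude the equilibrium neighborhood: there the drift may fail, but the process sits well below~$b$. The concave transform tames the boundary.
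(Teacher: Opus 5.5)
There is a genuine gap in your drift step. You claim that the weights of the Korobeinikov-type function $F$ can be chosen so that its mean-field derivative is bounded away from $0$ outside a $\delta n$-ball around the equilibrium. This fails for \emph{every} choice of weights. Whenever $R_1=R_1^*$ and $R_2=R_2^*$, you have $\frac{dI_1}{dt}=I_1(\lambda_1^*R_2^*-\varrho_1)=0$ and likewise $\frac{dI_2}{dt}=0$, while both $R$-terms carry the factor $1-R_i^*/R_i=0$. Hence $\frac{dF}{dt}=0$ on the whole segment $\{R_1=R_1^*,\ R_2=R_2^*,\ I_1+I_2=I^*\}$; with the paper's weights, $\frac{dF}{dt}=-\frac1n\bigl(\frac{I_1}{R_1}\delta_1^2+\frac{I_2}{R_2}\delta_2^2+(\delta_1+\delta_2)^2\bigr)$. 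Along this segment, $F$ grows continuously from $0$ at the equilibrium to $\infty$ as $I_1\to 0$. So the segment crosses every window $[a,b)$ in potential space, including the one where \Cref{pre:negativeDrift} needs drift. At the states on it, the per-step drift of $F$ is only the $O(1/n)$ discretization term, which by convexity of $f$ is even non-negative. The window therefore cannot excise this set, and your concave transform $\phi$, which targets the boundary $I_i\to 0$, does not address it. The missing idea is the paper's tilt of the $R$-targets to $(1+\ln I_2^*/\ln I_2)R_1^*$ and $(1+\ln I_1^*/\ln I_1)R_2^*$ (\Cref{def:lyapunovPotential3}). On this segment with $I_1\le\varepsilon_h n$, $R_1$ tends to decrease and $R_2$ to increase. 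The tilt lowers the potential for exactly these movements, which yields $\frac{dL}{dt}\le -c_d\, n/\ln n$ throughout the window (\Cref{lem:drift}).

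This tilt, not a step-size loss, is where the $n/\ln n$ exponent comes from. In the window $I_1,I_2\ge\varepsilon_l n$, so steps of $L$ are $O(1)$ (\Cref{lem:constantStep}), while the per-step drift is only $-\Omega(1/\ln n)$. Your own parameters ($|\varepsilon|=\Omega(1/n)$, $c=O(\ln n/n)$, $b=\Theta(1)$) would give exponent $n/\ln^2 n$. Your jumbled bookkeeping also contradicts your drift claim: a drift of $-\Omega(n)$ would need only $k\in\omega(1)$. The paper requires $k\in\omega(\ln n)$ precisely because the $O(1/k)$ per-step error (\Cref{lem:cliqueJumbled}) must be dominated by a $-\Omega(1/\ln n)$ drift.

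A second, smaller gap concerns the start. The hypotheses constrain only $I_1(0)$ and $I_2(0)$, so $R_1(0)$ or $R_2(0)$ may be $0$ or a tiny fraction of $n$. That makes $F_0$ infinite or larger than any fixed multiple of $n$, placing the start above $b$. The paper's shift to $2y_i\,f(2R_i^*,R_i+R_i^*)$ keeps the $R$-terms $O(n)$ for all $R_i$, so that $L_0\le c_l n$ (\Cref{lem:potentialOneSmall}).

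Your conversion from steps to time, dominating the number of steps by a Poisson process of rate $O(n)$, is fine. It is slightly cleaner than the paper's conditioning argument with \Cref{lem:sumExponential}.
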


We note that an \erdosRenyi graph $G(n,p)$ is almost surely $(p',\bigO{\sqrt{np}})$-jumbled (\Cref{lem:ErdosJumbled}) when $p\leq 0.99$.
Hence, \Cref{cor:jumbledContinuous} carries over for $p=k/n\leq 0.99$ and $k \in \smallOmega{\ln^2(n)}$ (\Cref{cor:ErdosSurvival}).

Since jumbled graphs subsume cliques, \Cref{cor:jumbledContinuous} is very much in line with results for the SIS process~\cite{ganesh2005effect} on cliques (with one infection), where the process also exhibits with high probability a super-polynomial survival time once the infection rate is at least in the order of~$\frac{1}{n}$.
The main difference to the SIS constraint is that we require upper and lower bounds for the infection rates in \IRIR as both infections depend on each other and an imbalance among the infections may lead to one infection dying out fast which also results in the other one dying out fast.

We show that the constraint $\frac{\ra}{\lae} + \frac{\rb}{\lbe} < (1-c)n$ is tight up to constant factors by showing that the expected survival time of the \IRIR process is at most \bigO{n \ln(n)} if $\frac{\ra}{\lae} + \frac{\rb}{\lbe} > (1+c)n$.
Due to concentration inequalities such as Markov's inequality, a polynomial upper bound also holds with high probability.

\begin{restatable}{theorem}{fastDieOut}
    \label{thm:fastDieOut}
    Let $C$ be an \IRIR process on a perfectly mixed graph~$G$ with $n \in \N_{\geq 1}$ vertices and density $\density \in (0, 1]$.
    Let~$C$ have constant recovery rates $\ra,\rb \in \R_{>0}$ and effective infection rates $\lae=\ca/n$ and $\lbe=\cb/n$, for constants $\ca,\cb \in \R_{>0}$.
    Furthermore, let $c\in \R_{>0}$ be a constant such that $\frac{\ra}{\lae} + \frac{\rb}{\lbe} > (1+c)n$.
    Last, let~$T$ be the survival time of~$C$.
    Then $\E{T} \in \bigO{n \ln(n)}$.
\end{restatable}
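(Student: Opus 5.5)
The plan is to split the survival time into two phases. In the first phase, both infections are present. The second phase starts when one infection has died out and lasts until the other dies out. I would bound the first phase with an additive drift argument on a logarithmic potential, and the second with a crude event-counting argument.

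Throughout, I work on the clique with effective rates $\lae=\ca/n$ and $\lbe=\cb/n$.

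\textbf{Phase one.} Let $T_1\coloneqq\inf\{t\mid \Ia{t}=0\lor\Ib{t}=0\}$. While $\Ia{t},\Ib{t}\geq 1$, define
\[
Z_t\coloneqq \frac{n}{\lae}\cdot\frac{\ln(\Ia{t})}{n}+\frac{n}{\lbe}\cdot\frac{\ln(\Ib{t})}{n}=\frac{\ln(\Ia{t})}{\ca}\,n\cdot\frac1n+\frac{\ln(\Ib{t})}{\cb}\,n\cdot\frac1n,
\]
that is, $Z_t=\ln(\Ia{t})/\lae+\ln(\Ib{t})/\lbe$, up to the harmless normalization by~$n$. The key observation concerns the drift of $\ln(\Ia{t})$. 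Infections of type~$1$ occur at rate $\lae\Ia{t}\Rb{t}$ and each changes $\ln\Ia{t}$ by $\ln(1+1/\Ia{t})\leq 1/\Ia{t}$. Recoveries of type~$1$ occur at rate $\ra\Ia{t}$ and each changes $\ln\Ia{t}$ by $\ln(1-1/\Ia{t})\leq -1/\Ia{t}$, where we interpret $\ln 0=-\infty$, which only helps. Hence the infinitesimal drift of $\ln\Ia{t}$ is at most $\lae\Rb{t}-\ra$. Symmetrically, the drift of $\ln\Ib{t}$ is at most $\lbe\Ra{t}-\rb$. Consequently,
\[
\text{drift}(Z_t)\leq \Ra{t}+\Rb{t}-\Bigl(\frac{\ra}{\lae}+\frac{\rb}{\lbe}\Bigr)< n-(1+c)n=-cn .
\]
After dividing~$Z$ by~$n$, this gives a process starting at most at $(1/\ca+1/\cb)\ln n$ that stays nonnegative while both infections are alive and has drift at most $-c$ per unit time. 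By the continuous-time additive drift theorem, or equivalently optional stopping applied to $Z_t/n+ct$, we get $\E{T_1}\leq (1/\ca+1/\cb)\ln(n)/c\in\bigO{\ln n}$. The point of the logarithm is that concavity makes the per-event error go in the right direction, so no case distinction on small~$\Ia{t}$ is needed.

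\textbf{Phase two.} Without loss of generality, say $\Ia{T_1}=0$. Then no new vertex ever enters $\Ia{}$ or $\Ra{}$ again. Each type-$2$ infection consumes a vertex of $\Ra{}$, so there are at most $n$ further infections and at most $2n$ further state changes in total before $\Ib{}=0$. While $\Ib{t}\geq 1$, recoveries happen at total rate at least $\rb$. The time between consecutive state changes is therefore stochastically dominated by an $\mathrm{Exp}(\rb)$ variable, which gives an expected remaining time of at most $2n/\rb$. The case $\Ib{T_1}=0$ is symmetric with $\ra$. Summing the two phases yields $\E{T}\in\bigO{n}\subseteq\bigO{n\ln n}$.

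\textbf{Main obstacle.} I expect the main difficulty to be the rigorous justification of the drift step in continuous time. First, $Z$ is undefined once one infection hits~$0$. This is handled by stopping at~$T_1$ and noting that $Z\geq 0$ beforehand. Second, one needs a clean continuous-time additive drift statement, or a discretization over the jump chain. Failing a ready-made theorem, I would pass to the embedded jump chain: bound the expected decrease per step by $c$ divided by the total event rate, which is $\bigO{n}$, and then bound the expected holding times from above. This is presumably where a $\ln n$ factor could be lost, matching the stated $\bigO{n\ln n}$ bound.
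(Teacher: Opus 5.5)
Your proposal is correct, and for phase one it takes a genuinely different and sharper route than the paper. The paper uses essentially your potential, in the form $\frac{1}{\lae}\bigl(\ln(\Ia{\tau_i})+1\bigr)+\frac{1}{\lbe}\bigl(\ln(\Ib{\tau_i})+1\bigr)$ with $\ln 0\coloneqq -1$. It then applies discrete additive drift to the jump chain. It bounds the per-step drift $-cn/r_i$ by $-c/c_r$ using $r_i\le c_r n$, which gives $\bigO{n\ln n}$ expected steps. It then charges every step the worst-case expected holding time $1/\min(\ra,\rb)$. Phase two is handled exactly as you do.

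The uniform holding-time bound is where the paper loses. The expected holding time at step $i$ is exactly $1/r_i$, which cancels the $1/r_i$ in the drift. Your continuous-time argument uses this cancellation; the equivalent jump-chain form is the supermartingale $Z_{\tau_i}+cn\,\tau_i$. It gives $\E{T_1}\in\bigO{\ln n}$ and hence $\E{T}\in\bigO{n}$, strictly better than the stated bound. So the fallback in your last paragraph is unnecessary. It is precisely the paper's route, and it loses a factor $n$ in phase one, not merely $\ln n$.

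The one step to repair is the convention $\ln 0=-\infty$. With it, the stopped process equals $-\infty$ at $T_1$. Then neither optional stopping nor an additive drift theorem, which needs the process bounded below, gives anything. Use $\ln 0\coloneqq -1$ instead, as the paper does.

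This still gives $\ln(0)-\ln(1)\le -1$ for the final recovery, so your per-event bound survives. Now $Z_{t\wedge T_1}/n\ge -1/\min(\ca,\cb)$, since exactly one infection is extinct at $T_1$. Dynkin's formula on the finite state space then yields $c\,\E{t\wedge T_1}\le Z_0/n+1/\min(\ca,\cb)$. Monotone convergence gives $\E{T_1}\le\bigl((1/\ca+1/\cb)\ln n+1/\min(\ca,\cb)\bigr)/c$.
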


In combination, \Cref{cor:jumbledContinuous} and \Cref{thm:fastDieOut} show that~$n$ acts as an epidemic threshold for $\frac{\ra}{\lae} + \frac{\rb}{\lbe}$, that is, a value where the survival time rapidly changes from at most polynomial to at least super-polynomial by just changing the effective infection rates by a constant factor.
This is comparable to~$n$ being the epidemic threshold for $\frac{\varrho}{\lambda^*}$ in the SIS process~\cite{ganesh2005effect}.

\vspace*{-1 ex}
\paragraph{Proof overview.}
Due to space constraints, our proofs are in the appendix.
Here, we outline the main idea for proving \Cref{cor:jumbledContinuous}, mentioning that \Cref{thm:fastDieOut} follows from similar but easier arguments.
In the following, we provide a high-level idea before we explain specific important steps in greater detail.

Our aim is to apply \Cref{pre:negativeDrift} to the \IRIR process~$C$.
This requires us to transform~$C$ into a real-valued process that has a sufficient drift (\cref{item:negativeDrift:drift}).
We call a function that performs such a transformation a \emph{potential function}.
Finding such a potential is the main challenge of the analysis.

Our final potential is based on a continuous Lyapunov function, which in turn incorporates information about the equilibrium of the \IRIR process.
This equilibrium is an idealized configuration where all transition probabilities are equal, which may not be attainable with discrete values.

In an idealized \IRIR process with continuous values, the equilibrium is a fixed point, meaning that the process cannot escape it.
For the actual, discrete, \IRIR process, the vicinity of this configuration still leads to configurations from which the \IRIR process cannot escape quickly with high probability.
This property is well modeled by negative drift.

Given that the equilibrium is based on a continuous view of the \IRIR process and that the discrete drift of a process can be seen as the difference quotient over a discrete step of the process, we first derive a Lyapunov function~$F$ whose derivative, that is, its continuous-time drift, is negative in a region around the equilibrium.
We then discretize~$F$ such that its important properties are still maintained within this region (\Cref{sec:model}).
Applying \Cref{pre:negativeDrift} to~$C$ with this potential (\Cref{sec:drift}) concludes the proof.

\subsection{Deriving a Suitable Potential}
\label{sec:model}
As explained above, we derive a Lyapunov function that we then discretize into our potential.

Let $G=(V,E)$ be a graph with $\numberOfVertices \in \N_{\geq 1}$ vertices and density $\density \in (0, 1]$, and let~$C$ be an \IRIR-process on~$G$.
Assume that~$C$ has recovery rates $\ra, \rb \in \R_{> 0}$ and effective infection rates $\lae, \lbe \in \R_{> 0}$.
By requiring all transition rates that change the configuration of~$C$ to be equal, we calculate the equilibrium state of~$C$.
The resulting equilibrium values are, using the notation from \Cref{sec:preliminaries},
\begin{align*}
    \Rae = \frac{\rb}{\lbe} \text{,\quad}
    \Rbe = \frac{\ra}{\lae}\text{,\quad}
    \Iae = (\numberOfVertices-\Rae - \Rbe)\frac{\rb}{\ra +\rb}\text{,\quad and\quad}
    \Ibe = (\numberOfVertices-\Rae - \Rbe)\frac{\ra}{\ra +\rb}.
\end{align*}
Note that these values are only between $0$ and $n$ if $\Rae + \Rbe <n$.
Solving for $\lae$ yields $\lae > (\numberOfVertices-\frac{\rb}{\lbe})^{-1}\ra$ as a threshold for a non-trivial equilibrium to arise.
We obtain an analogous condition for~$\lbe$.

In order to show that this equilibrium is globally stable, we define a Lyapunov function similar to \textcite{korobeinikov2002lyapunov} and \textcite{FrGoKlKrPa2024}.
To this end, let $\lyapunovHelper\colon \R^2 \to \R_{\geq 0}, (x, x^*) \mapsto x^* \left( \frac{x}{x^*} - \ln \frac{x}{x^*}-1\right)$.
We define for all $t \in \R_{\geq 0}$ and for the \emph{parameters} $\xa, \xb, \ya, \yb \in \R$ to be defined later
\begin{align*}
    \Lyapunov_t & = \Lyapunov(\Ia{t},\Ra{t},\Ib{t},\Rb{t}) = \xa \cdot \lyapunovHelper[\Iae,\Ia{t}]+ \ya \cdot \lyapunovHelper[\Rae,\Ra{t}]+ \xb \cdot \lyapunovHelper[\Ibe,\Ib{t}]+ \yb \cdot\lyapunovHelper[\Rbe,\Rb{t}].
\end{align*}
The Lyapunov function~$F$ contains one term for each of the four states that measures how close the number of vertices in that state is to its equilibrium.
In the following, we detail how to choose the parameters in order for the derivative of the Lyapunov function by time to be non-positive everywhere.

Let $t \in \R_{\geq 0}$.
By the definition of~$F_t$, its derivative with respect to~$t$ is~$0$ whenever $\Ra{t}=\Rae$ and $\Rb{t}=\Rbe$.
Consequently, requiring that the derivative with respect to~$t$ is non-positive everywhere, its derivative with respect to~$\Ra{t}$ and~$\Rb{t}$ is~$0$ at all states with $\Ra{t}=\Rae$ and $\Rb{t}=\Rbe$.
We use this property in order to derive two constraints for the parameters (\Cref{lem:derivativeR}).
These constraints help us fix their values later.

We obtain a third constraint by expressing the derivative with respect to $\da{t} \coloneqq \Ra{t} - \Rae$ and $\db{t} \coloneqq \Rb{t} - \Rbe$ (\Cref{lem:lastDegree}) and by choosing the parameters that simplify~$F_t$ and make its derivative with respect to~$t$ clearly non-positive (\Cref{lem:derivativeF}).

Given that we have three constraints and four parameters, we resolve the last degree of freedom in a way that makes all parameters constant for the regime of rates we consider in later part of the proof.

In order to derive a potential from~$F$, we require furthermore that configurations that are close to at least one infection dying out to be large, as \Cref{pre:negativeDrift} stops once the potential becomes too large.
Currently,~$F$ is large whenever \emph{any} of the states has few vertices in them, whereas having only few recovered vertices does not imply that any infection dies out quickly.
Hence, we adjust the terms for~$\Ra{t}$ and~$\Rb{t}$ in~$F$ such that they never become too large.
This results for all $t \in \R_{\geq 0}$ in
\begin{align*}
    \Lyapunovb_t & = \xa\lyapunovHelper[\Iae,\Ia{t}]+ 2\ya\lyapunovHelper[2\Rae,\Ra{t}+\Rae]+ \xb\lyapunovHelper[\Ibe,\Ib{t}]+ 2\yb\lyapunovHelper[2\Rbe,\Rb{t}+\Rbe].
\end{align*}
whose derivative is still non-positive everywhere (\Cref{lem:derivativeK}).

Last, we require that the derivative is not only non-positive but also smaller than a negative constant (due to \cref{item:negativeDrift:drift}) for a sufficiently large region of the range.
The function~$\Lyapunovb$ does not satisfy this, as it is~$0$ whenever $\Ra{t}=\Rae$ and $\Rb{t}=\Rbe$.
However, its derivative is already sufficiently small whenever~$\da{t}$ or~$\db{t}$ are at least linear in~$n$, which is a satisfactory regime.
Hence, we adjust~$\Lyapunovb$ slightly in order to make its derivative smaller for small~$\da{t}$ and~$\db{t}$ without changing it greatly for the other values (\Cref{def:lyapunovPotential3}).
We call this final Lyapunov function~$\Lyapunovc$ (indexed by the time $t \in \R_{\geq 0}$) and use it as a potential.

\subsection{Applying the Negative Drift Theorem}
\label{sec:drift}
We apply \Cref{pre:negativeDrift} to the potential~$\Lyapunovc$ derived in \Cref{sec:model}.
We first derive a result for perfectly mixed graphs and then extend the proofs to jumbled-graphs, requiring only minor adjustments.

Recall that \Cref{pre:negativeDrift} only requires us to have negative drift for a sufficiently large region of the potential's range.
To this end, we relate the state space and the potential's range, deriving useful properties on the states that fall into the regime that are of use for further calculations.
Specifically, the potential is only large when~$\Ia{t}$ or~$\Ib{t}$ is small.
We show that there are constants $\varepsilon_l, \varepsilon_h, c_l, c_h \in \R_{>0}$ such that $\Lyapunovc_t < c_h n$ implies $\Ia{t} > \varepsilon_l n$ and $\Ib{t} > \varepsilon_l n$  (\Cref{lem:potentialBothBig}), and additionally $\Lyapunovc_t > c_l n$ implies $\Ia{t} < \varepsilon_h n$ or $\Ib{t} < \varepsilon_h n$ (\Cref{lem:potentialOneSmall}, respectively).
Hence, as long as the potential is less than a specific constant fraction of~$n$, neither~$\Ia{t}$ nor~$\Ib{t}$ is too small. For the other properties we assume that this condition is satisfied.

\Cref{item:negativeDrift:boundedStepSize,item:negativeDrift:noLargeJumps} of \Cref{pre:negativeDrift} are easily satisfied by showing that changing the number of infected or recovered vertices changes the potential, up to additive lower-order terms, by the derivative of~$\Lyapunovc$ with respect to the that state (\Cref{lem:differenceDerivative}), which is bounded from above by a constant (\Cref{lem:constantStep}).

For \cref{item:negativeDrift:drift} of \Cref{pre:negativeDrift}, we first show that the drift is the derivative of~$\Lyapunovc_t$ with respect to~$t$ normalized by the total rate of change~$r_t$ plus lower-order terms (\Cref{lem:driftDerivative}).
As $r_t \in \bigTheta{n}$ in the considered states, the drift depends mostly on the derivative of~$\Lyapunovc_t$, which is negative by construction.
We then show that this derivative in the considered regime is at most of order $-n/\ln(n)$ (\Cref{lem:drift}).
We show this via a case distinction on whether~$\Ra{t}$ or~$\Rb{t}$ is close to its equilibrium.
If one of these values is far away, then the drift of~$\Lyapunovb_t$ is already in $-\bigOmega{n}$, which is almost unaffected by the shift to~$\Lyapunovc_t$.
If both~$\Ra{t}$ and~$\Rb{t}$ are close, the drift of $\Lyapunovb_t$ is small, but the shift to~$\Lyapunovc$ yields a derivative in $-\bigOmega{n/\ln(n)}$.

Combining all these results and thus applying \Cref{pre:negativeDrift}, we obtain a bound on the distribution of the number of (discrete) steps until the infection dies out for perfectly mixed graphs (\Cref{thm:clique}).
We extend this result to jumbled graphs by showing  that the drift on jumbled graphs within a given parameter range is the same as on perfectly mixed graphs up to lower order terms (\Cref{lem:cliqueJumbled}).
All other properties we showed carry over directly.
Hence, the bounds for jumbled graphs and perfectly mixed graphs are the same.
Last, we transfer these results to continuous time by bounding with high probability how much continuous time passes at most between two discrete steps (\Cref{cor:jumbledContinuous}).

\section{Dynamics for More than Two Infections}
\label{sec:multipleInf}
We generalize the \IRIR process to more than two infections and derive the transition equilibria.
With these insights, we speculate on the resulting dynamics, using our insights from \Cref{sec:mainResult}.

The \IRIR process naturally generalizes to more than two infections.
Given $k \in \N_{\geq 3}$ infections, each infection $i \in [1 .. k]$ has its own effective infection rate $\lambda^*_i \in \R_{> 0}$ and its own recovery rate $\varrho_i \in \R_{> 0}$.
If a vertex infected by infection~$i$ recovers, it is susceptible to infection by all infections except infection~$i$.
Moreover, each vertex is either infected by or recovered from exactly one infection at a time.
Using the same notation as for \IRIR introduced in \Cref{sec:preliminaries} but extending the range of indices for all~$k$ infections, we obtain for each $i \in [1..k]$, abbreviating $[k]_i \coloneqq [1 .. k] \smallsetminus \{i\}$, that
\begin{align}
    \frac{\d I_{i,t}}{\d t} & = \lambda_i^* I_{i,t}\left(\sum\nolimits_{j \in [k]_i}{(R_{j,t})} - \frac{\varrho_i}{\lambda_i^*}\right)\label{der:Ii} \\
    \frac{\d R_{i,t}}{\d t} & = \varrho_i I_{i,t} - R_{i,t} \sum\nolimits_{j\in [k]_i}{(\lambda_j^* I_{j,t})} \label{der:Ri}.
\end{align}

The equilibrium states are exactly those where all derivatives of the equations above are~$0$.
This results in some trivial equilibria.
For example, for each $i \in [1 .. k]$ and $t \in \R_{\geq 0}$, we have $\frac{d I_{i,t}}{dt} = 0$ if $I_{i,t}=0$.
If this is the case for some of the $I_{i,t}$, the problem reduces to finding equilibria in a process with one fewer infection.
Hence, for the remainder, we assume that there is a $t \in \R_{\geq 0}$ such that for all $i \in [1 .. k]$, we have $I_{i,t} > 0$.
Let $i \in [1 .. k]$.
Then \cref{der:Ii} implies $\sum_{j\in [k]_i}{(R_{j,t})} = \frac{\varrho_i}{\lambda_i^*}$.
Summing this equation for all $j \in [k]_i$ and subtracting the equation for~$i$ multiplied by $k-2$ and then dividing by $k-1$ yields
\begin{align}
    R_i^*=\frac{\sum_{j\in [k]_i}{(\varrho_j/\lambda_j^*)-(k-2)(\varrho_i/\lambda_i^*)}}{k-1}.\label{eq:Ri}
\end{align}

In order for infection~$i$ to not die out quickly,~$R_i^*$ needs to be positive as otherwise $I_i^*$ is non-positive. Hence, we require
\begin{align}
    \frac{\varrho_i}{\lambda_i^*} < \frac{1}{k-2}\sum\nolimits_{j\in [k]_i}{\frac{\varrho_j}{\lambda_j^*}}.\label{ineq:i}
\end{align}

In a sense, $\frac{\varrho_i}{\lambda_i^*}$ can be seen as the \emph{weakness score} of infection $i$. It says how many vertices susceptible to infection $i$ are required in order to sustain the number of vertices infected by~$i$ in expectation. If the weakness score is too high compared to the weakness scores of the other infections, infection~$i$ is not able to survive against the other infections. \Cref{ineq:i} shows that an infection can only be slightly weaker than the average infection in the process in order to survive, in particular if~$k$ is large.

An interesting observation is that when all but one infection have the same weakness score, they all satisfy \cref{ineq:i} as long as the last infection has a positive recovery rate. Hence, a single strong infection cannot make the other infections die out. Instead, it creates a more competitive environment among the other infections where an infection dies out if it is only slightly weaker than the others.

We get another restriction for survival by summing \cref{eq:Ri} for all $i \in [1 .. k]$, resulting in
\begin{align}
    R^* \coloneq \sum\nolimits_{i \in [1..k]}{R_i^*}=\frac{1}{k-1} \sum\nolimits_{i \in [1..k]}{\frac{\varrho_i}{\lambda_i^*}}\label{eq:sum_Ri}.
\end{align}

As there are only~$n$ vertices,~$R^*$ needs to be smaller than~$n$ in order for the equilibrium to possibly be stable. Thus, the overall weighted weakness score (the right-hand side of \cref{eq:sum_Ri}) of the infections cannot be too large. Noting that \cref{ineq:i} is equivalent to $\frac{1}{k-1} \sum_{i \in [1..k]}{\frac{\varrho_i}{\lambda_i^*}} < \frac{1}{k-2}\sum_{j\in [k]_i}{\frac{\varrho_j}{\lambda_j^*}}$, we see that an infection $i \in [1 .. k]$ has a negative equilibrium value of~$R_i^*$ if and only if its addition increases $R^*$. Consequently, under idealized assumptions, infections die out until the remaining set~$S$ of infections minimizes $\frac{1}{|S|-1} \sum_{i \in S}{\frac{\varrho_i}{\lambda_i^*}}$. Note that a single infection alone cannot survive long, hence $|S| \geq 2$. If $\frac{1}{|S|-1} \sum_{i \in S}{\frac{\varrho_i}{\lambda_i^*}} \geq n$, there is no set of infections that is able to survive long and all infections die out. Otherwise, there is a stable state in which all values are positive.

We shift our attention from the equilibrium values of the~$R$ values to those of the~$I$ values.
Recall that at the equilibrium, all derivatives from \cref{der:Ri} are~$0$. Rearranging them yields for all $i\in [1..k]$ that
\begin{align}
    \frac{\varrho_i I_{i}^*}{R_{i}^*}+\lambda_i^*I_i^*=\sum\nolimits_{j\in [1..k]}{\lambda_j^* I_j^*}.\label{eq:Ii_inter}
\end{align}

Note that the term on the right of \cref{eq:Ii_inter} is independent of~$i$. Hence, the left-hand term is the same for all $i \in [1 .. k]$. Recalling that $\frac{\varrho_i}{\lambda_i^*}= \sum_{j\in [k]_i}{R_j^*}$, the left-hand term of \cref{eq:Ii_inter} is equivalent to $\frac{\lambda_i^*}{R_i^*}I_i^*(\frac{\varrho_i}{\lambda_i^*}+R_i^*)=\frac{\lambda_i^*}{R_i^*}I_i^*R^*$. Thus,~$I_i^*$ is proportional to~$\frac{R_i^*}{\lambda_i^*}$. Using $\sum_{i\in[1..k]}{I_i^*}=n-R^*$, we obtain
\begin{align}
    I_i^*=\frac{R_i^*/\lambda_i^*}{\sum_{j\in[1..k]}R_j^*/\lambda_j^*}(n-R^*).\label{eq:Ii}
\end{align}

All infections combined have $n-R^*$ infected vertices in the equilibrium. Hence, they have more infected vertices the smaller the overall weighted weakness of the infections is. From this pool of infected vertices, the stronger infections get more vertices. However, infections have fewer infected vertices in the equilibrium when the infection rate is larger and the weakness score is the same. In other words, both an increased infection and recovery rate makes the infection have fewer infected vertices in an equilibrium. Roughly speaking, this is a consequence of the weakness score deciding how many vertices become infected by an infection. Increasing both the infection and recovery rate by the same factor does not change the weakness score, but it makes vertices stay infected for a shorter time, meaning that at the same rate of infecting vertices, fewer vertices are infected at any given time.

All in all, the \IRIR dynamics with three of more infections differ from those with two infections substantially to some degree.
For two infections, each vertex is susceptible to at most one of the infections at a time.
That is, the dynamics are fully cooperative.
For $k \in \N_{\geq 3}$ infections, a recovered vertex is susceptible to $k - 1 \geq 2$ infections, which creates a competitive environment among the infections.
This competition is so strong that weak infections die out quickly while the remaining ones continue to thrive.

\section{Outlook}
\label{sec:outlook}
While we discuss in \Cref{sec:multipleInf} the dynamics of an \IRIR process with more than two infections under idealized assumptions, we are currently missing a fully rigorous analysis in the same spirit as our results in \Cref{sec:mainResult}.
In particular, it would be interesting to see whether our predictions translate to the fully rigorous setting.
Moreover, while our discussion for more than two infections predicts how the strongest equilibrium looks like, it does not give estimates for how likely it is.
These probabilities highly depend on the starting configuration and detailed infection interactions, which requires a more careful analysis.

Another interesting direction for future work is the analysis of more complex, heterogeneous graphs.
Currently, the impact of high-degree vertices on the survival time is unknown.
Particularly in the case with more than two infections, the structure of the graph could influence the dynamics drastically.

Another venue to explore is to relax our assumptions.
Our main results determine an epidemic threshold when all equilibrium values are of linear size.
It remains an open problem what happens when this is not the case because one of the infections is much stronger than the other one or because both infections are so strong that there are almost no recovered vertices.

Last, it is interesting to derive a stronger upper bound than $\bigO{n \ln(n)}$ on the survival time for perfectly mixed graphs if $\frac{\ra}{\lae} + \frac{\rb}{\lbe} > (1+c)n$.
Most SIRS-type infection processes exhibit an expected survival time of $\bigO{\ln(n)}$ in a similar regime.
However, in the \IRIR setting, it may be the case that $\bigO{\ln(n)}$ cannot be achieved because $\frac{\ra}{\lae} + \frac{\rb}{\lbe} > (1+c)n$ only implies that \emph{one} of the infections dies out fast. The other infection may then continue, similarly to a percolation process, for longer than $\bigO{\ln(n)}$ in expectation.

\printbibliography

\newpage
\appendix

\section{Mathematical Tools}
\label{sec:tools}
In this section, we state some mathematical tools we use in our analysis. We start by stating a property of $(p,\alpha)$-jumbled graphs.

\begin{lemma}
    Let $G=(V,E)$ be a graph and let $p \in (0,1)$ and $\alpha \in \R_{\geq 1}$. If $G$ is $(p,\alpha)$-jumbled, then for every two disjoint subsets $W,U \subset V$ holds
    \begin{align*}
        \left|e(W,U)-p|U||W|\right| & \leq 2 \alpha (|U|+|W|).\qedhere
    \end{align*}
\end{lemma}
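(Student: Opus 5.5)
The plan is to write the bipartite edge count $e(W,U)$ as a signed combination of edge counts of induced subgraphs, and then bound each piece with the defining inequality of $(p,\alpha)$-jumbled graphs. If $W$ or $U$ is empty, both sides vanish, so we assume both are non-empty. Since $W$ and $U$ are disjoint, every edge of the induced subgraph $G[W\cup U]$ either lies inside $W$, lies inside $U$, or crosses between them. This gives the identity
\begin{align*}
    e(W,U) = e(G[W\cup U]) - e(G[W]) - e(G[U]).
\end{align*}
For the expected counts we use the elementary identity
\begin{align*}
    \binom{|W|+|U|}{2} - \binom{|W|}{2} - \binom{|U|}{2} = |W|\,|U| .
\end{align*}

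Subtracting $p$ times the second identity from the first, we obtain
\begin{align*}
    e(W,U) - p|U||W| &= \Bigl(e(G[W\cup U]) - p\tbinom{|W\cup U|}{2}\Bigr) - \Bigl(e(G[W]) - p\tbinom{|W|}{2}\Bigr) \\
    &\quad - \Bigl(e(G[U]) - p\tbinom{|U|}{2}\Bigr).
\end{align*}
Each bracket is the deviation of an induced subgraph from its expected edge count. By the jumbledness assumption, these deviations are bounded in absolute value by $\alpha(|U|+|W|)$, $\alpha|W|$, and $\alpha|U|$, respectively. The triangle inequality then gives
\begin{align*}
    \bigl|e(W,U)-p|U||W|\bigr| \le \alpha(|U|+|W|) + \alpha|W| + \alpha|U| = 2\alpha(|U|+|W|),
\end{align*}
which is the claimed bound.

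The only step requiring care is the bookkeeping in the decomposition. The argument needs $W$ and $U$ to be disjoint, so that no edge is counted twice and the crossing edges are exactly those of $G[W\cup U]$ minus those inside $W$ and inside $U$. It also uses the fact that the jumbledness condition applies to every induced subgraph, including $G[W]$ and $G[U]$. There is no genuine obstacle beyond this bookkeeping.
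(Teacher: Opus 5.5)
Your proof is correct and is the paper's argument, which it states in one line: write $e(W,U)=e(G[W\cup U])-e(G[W])-e(G[U])$, use $\binom{|W|+|U|}{2}-\binom{|W|}{2}-\binom{|U|}{2}=|W||U|$, and add the three jumbledness errors $\alpha(|U|+|W|)+\alpha|W|+\alpha|U|$. One small slip: when $W$ or $U$ is empty, only the left-hand side vanishes, not both sides; the right-hand side is nonnegative, so the inequality still holds trivially.
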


This follows directly from expressing the number of edges between $W$ and $U$ as the number of edges in the graph induced by $W \cup U$ minus the edges in the graphs induced by $W$ and $U$ and summing up the error terms.

The next lemma shows that \erdosRenyi graphs are almost surely $(p,\sqrt{np})$-jumbled. It follows directly from \cite[Corollary~2.3]{krivelevich2006pseudo}.

\begin{lemma}[{\protect\cite[Corollary~2.3]{krivelevich2006pseudo}}]\label{lem:ErdosJumbled}
    Let $p = p(n)\leq 0.99$. Let $G=G(n,p)$ and let $p'$ be its density. Then $G$ is almost surely $(p',\bigO{\sqrt{np}})$-jumbled.
\end{lemma}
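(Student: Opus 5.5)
The plan is to combine a spectral bound for $G(n,p)$ with the expander mixing lemma, and then to replace the parameter~$p$ by the empirical density~$p'$ at a cost of only lower-order terms.

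\textbf{Step 1: spectral bound.} From \cite[Corollary~2.3]{krivelevich2006pseudo}, I would take the statement that, for $p \leq 0.99$, almost surely all eigenvalues of the adjacency matrix of $G = G(n,p)$ other than the largest are bounded in absolute value by $\lambda = \bigO{\sqrt{np}}$. As far as I recall, this is the form in which the corollary is stated, possibly with an additive $\bigO{\sqrt{\ln n}}$ or a condition such as $np \gtrsim \ln n$. If so, I would absorb these terms by using $\alpha \geq 1$: whenever $\sqrt{np}$ is below a constant, the bound $\bigO{\sqrt{np}}$ is read as $\bigO{1} \leq \alpha$.

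\textbf{Step 2: discrepancy for induced subgraphs.} I would apply the expander mixing lemma, in its version for nearly regular graphs, with $U = W = V(H)$. This gives
\[
\bigl|2e(H) - p|H|^2\bigr| \leq \lambda |H| + (\text{degree-irregularity error}).
\]
The diagonal correction from $p\binom{|H|}{2}$ versus $p|H|^2/2$ is only $p|H|/2 \leq |H|$. The degree-irregularity error is controlled by a Chernoff bound: almost surely every degree is $np \pm \bigO{\sqrt{np \ln n} + \ln n}$.

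\textbf{Step 3 (main obstacle): irregularity error and small $p$.} The delicate point is that the irregularity error must stay $\bigO{\sqrt{np}}\,|H|$ and not pick up a factor of $\sqrt{\ln n}$. To avoid this, I would first try to use directly the discrepancy form in which Krivelevich--Sudakov state their corollary, namely $|e(U) - p|U|^2/2| \leq \bigO{\sqrt{np}}\,|U|$ for all $U$. That form already handles irregularity, so the eigenvalue detour becomes only a fallback. For very small $p$, I would instead use the trivial bound $e(H) \leq |H| \cdot \max\deg$ together with $\alpha \geq 1$.

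\textbf{Step 4: switching from $p$ to $p'$.} Write $p' = e(G)/\binom{n}{2}$. By Chernoff, almost surely
\[
|e(G) - p\tbinom{n}{2}| = \bigO{n\sqrt{p\ln n} + \ln n},
\]
so $|p' - p| = \bigO{\sqrt{p \ln n}/n + \ln n/n^2}$. Then
\[
|p - p'|\tbinom{|H|}{2} \leq |p - p'|\, n|H|/2 = \bigO{\sqrt{p\ln n} + 1}\,|H|,
\]
which is $\bigO{\sqrt{np}}\,|H|$ for $n \geq \ln n$, and otherwise $\bigO{1}\,|H| \leq \alpha|H|$. Finally, the triangle inequality combines the bounds from Steps 2 and 4 into $\bigl|e(H) - p'\binom{|H|}{2}\bigr| \leq \bigO{\sqrt{np}}\,|H|$ for every induced subgraph~$H$, which is exactly $(p', \bigO{\sqrt{np}})$-jumbledness.
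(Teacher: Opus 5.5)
The part of your proposal that works is the option you list as a first try in Step~3, and it is the paper's route. The paper gives no argument beyond citing Krivelevich--Sudakov's Corollary~2.3. As far as I know, that corollary is itself the statement that $G(n,p)$ is almost surely $(p,\bigO{\sqrt{np}})$-jumbled, proved combinatorially. It is not an eigenvalue bound, as you describe it in Step~1.

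Your Step~4 correctly supplies the passage from $p$ to the empirical density $p'$, which the paper leaves implicit. You do not need a separate Chernoff estimate for it. Applying $(p,\alpha)$-jumbledness to $H=G$ gives $\bigl|e(G)-p\binom{n}{2}\bigr|\le\alpha n$, hence $|p-p'|\le 2\alpha/(n-1)$. Then $|p-p'|\binom{h}{2}\le\alpha h$ for all $h\le n$, so $G$ is deterministically $(p',2\alpha)$-jumbled.

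The spectral detour in Steps~1--3 should be dropped, because as a self-contained argument it fails in the sparse regime. For $np=\Theta(1)$ the maximum degree is $\Delta=\Theta(\ln n/\ln\ln n)$. Testing $A-p(J-I)$ on the unit vector $(\sqrt{\Delta}\,e_v+\mathbf{1}_{N(v)})/\sqrt{2\Delta}$, for a maximum-degree vertex~$v$, gives $\lVert A-p(J-I)\rVert\ge(1-\smallO{1})\sqrt{\Delta}$. The lemma, however, requires $\bigO{\sqrt{np}}=\bigO{1}$. The same failure occurs whenever $np=\smallO{\ln n}$ and $\Delta\to\infty$.

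Your absorption via $\alpha\ge1$ does not rescue this. An additive $\sqrt{\ln n}$ is not $\bigO{1}$, and a hypothesis $np\gtrsim\ln n$ simply leaves the range $1\lesssim np\ll\ln n$ uncovered. The max-degree fallback $e(H)\le|H|\Delta/2$ only yields $\alpha\approx\Delta/2$, which is again unbounded for $np=\Theta(1)$. That regime needs a first-moment argument: bound the binomial tail of $e(U)$ above $p\binom{u}{2}+Cu\max(1,\sqrt{np})$, and take a union bound over all $\binom{n}{u}$ sets~$U$ of each size~$u$.

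Your worry about degree irregularity in the dense regime is unnecessary. The identity $\bigl|2e(U)-p|U|(|U|-1)\bigr|=\bigl|\mathbf{1}_U^{\top}(A-p(J-I))\mathbf{1}_U\bigr|\le\lVert A-p(J-I)\rVert\,|U|$ uses no regularity at all. The only input there is a spectral-norm bound of order $\bigO{\sqrt{np}}$, which is available (Feige--Ofek) when $np\gtrsim\ln n$.
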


To bound the survival time of the infection process we use the following drift theorems.

\begin{theorem}[Additive Drift, {\protect\cite[Theorem~3.3]{krejca2019theoretical}}]\label{pre:additiveDrift}
    Let $(X_t)_{t \in \N}$ be random variables over $\R$ adapted to a filtration $(\filtration_t)_{t \in \N}$, and let $T= \inf\{t\in\N \mid X_t \leq 0\}$. Furthermore, suppose that,

    \begin{enumerate}
        \item there is some value $\delta>0$ such that, for all $t\in \N$, it holds that
              $$\E{(X_t-X_{t+1})\cdot \indicator{t<t}\mid \filtration_t}\geq \delta \cdot \indicator{t<T} \text{, and that,}$$
        \item for all $t\in \N$, it holds that $X_t \cdot \indicator{t\leq T}\geq 0$.
    \end{enumerate}

    Then
    \begin{align*}
        \E{T} & \leq \frac{\E{X_0}}{\delta}.\qedhere
    \end{align*}
\end{theorem}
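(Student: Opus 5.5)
We prove this via a stopped supermartingale and optional stopping in its most elementary form, a telescoping sum of expectations followed by monotone convergence. Throughout, read the first condition as $\E{(X_t-X_{t+1})\cdot \indicator{t<T}}[\filtration_t] \geq \delta\cdot\indicator{t<T}$ (the indicator $\indicator{t<t}$ in the statement is evidently a typo for $\indicator{t<T}$). We may assume $\E{X_0}<\infty$, as otherwise there is nothing to show.

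\textbf{Step 1: a supermartingale.} For all $t\in\N$, define
\begin{align*}
    Y_t \coloneqq X_{\min(t,T)} + \delta\cdot \min(t,T).
\end{align*}
The increment $Y_{t+1}-Y_t$ vanishes on $\{t\geq T\}$. On $\{t<T\}$ it equals $X_{t+1}-X_t+\delta$. The event $\{t<T\}$ is $\filtration_t$-measurable, since $T$ is a stopping time with respect to $(\filtration_t)_{t\in\N}$. Hence
\begin{align*}
    \E{Y_{t+1}-Y_t}[\filtration_t] = \indicator{t<T}\cdot\bigl(\E{X_{t+1}-X_t}[\filtration_t]+\delta\bigr) \leq 0
\end{align*}
by the first condition. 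Taking expectations and telescoping yields $\E{Y_t}\leq \E{Y_0}=\E{X_0}$ for all $t\in\N$.

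\textbf{Step 2: dropping the potential term.} Since $\min(t,T)\leq T$, the second condition gives $X_{\min(t,T)}\geq 0$. Therefore
\begin{align*}
    \delta\cdot\E{\min(t,T)} \leq \E{Y_t} \leq \E{X_0}.
\end{align*}

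\textbf{Step 3: limit.} As $t\to\infty$, $\min(t,T)$ increases monotonically to $T$, where we allow $T=\infty$. By the monotone convergence theorem, $\delta\cdot\E{T}\leq\E{X_0}$. In particular, $T$ is almost surely finite whenever $\E{X_0}<\infty$. Dividing by $\delta>0$ gives the claim.

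The only delicate point is integrability, needed so that the conditional expectations in Step~1 and the telescoping of $\E{Y_t}$ are well defined. I would handle this by noting that the drift condition already presupposes conditional expectations of the increments. If needed, one can argue on $\{t<T\}$ with $X_t\geq 0$ and use generalized conditional expectations of the nonnegative parts. Alternatively, one can first prove the bound for the truncated stopping time $\min(T,m)$ and then let $m\to\infty$, which again only uses monotone convergence. Everything else is routine.
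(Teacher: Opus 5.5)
Your proof is correct. The paper contains no proof of this statement, since it imports it as a black-box tool from Krejca's thesis (Theorem~3.3 there), so there is no in-paper argument to compare against. Your argument is the standard proof of additive drift: the stopped process $X_{\min(t,T)}+\delta\min(t,T)$ is a supermartingale, the second condition lets you drop the nonnegative part, and monotone convergence passes to the limit. Such results are commonly derived from an optional-stopping theorem for supermartingales bounded from below. Your telescoping version is equivalent to that route but self-contained, and it gives $T<\infty$ almost surely as a consequence rather than needing it as an input.

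One small correction to Step~1 also settles your integrability concern. Do not factor the indicator out as $\indicator{t<T}\cdot\E{X_{t+1}-X_t}[\filtration_t]$. The hypothesis only concerns the conditional expectation of $(X_{t+1}-X_t)\cdot\indicator{t<T}$, and $X_{t+1}-X_t$ itself need not be integrable. Write instead $\E{Y_{t+1}-Y_t}[\filtration_t]=\E{(X_{t+1}-X_t)\cdot\indicator{t<T}}[\filtration_t]+\delta\cdot\indicator{t<T}\leq 0$.

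For integrability, argue by induction. The second condition at $t=0$ gives $X_0\geq 0$, so $Y_0=X_0$ is integrable once $\E{X_0}<\infty$. Each increment $Y_{t+1}-Y_t$ is integrable because the first condition presupposes it. Hence every $Y_t$ is integrable, and $\E{Y_{t+1}}\leq\E{Y_t}$ follows from the tower property without generalized conditional expectations.
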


\negativeDrift*

To bound the drift, we use following lemma from \cite{FrGoKlKrPa2024} which gives useful bounds for the function $\lyapunovHelper$ defined in \Cref{def:laypunocHelper}.

\begin{lemma}[{\protect\cite[Lemma~4.3]{FrGoKlKrPa2024}}]\label{lem:lyapunovHelper}
    Let $x^* \in \R_{>0}$ and $x \in \R_{>2}$. Then
    \begin{align*}
         & \lyapunovHelper[x^*,x+1] - \lyapunovHelper[x^*,x] \leq 1 - \frac{x^*}{x} + \frac{x^*}{x(x+1)} \textrm{ and}          \\
         & \lyapunovHelper[x^*,x-1] - \lyapunovHelper[x^*,x] \leq -\left(1 - \frac{x^*}{x} - \frac{x^*}{x(x-1)}\right).\qedhere
    \end{align*}
\end{lemma}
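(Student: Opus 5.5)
The plan is to reduce both inequalities to elementary bounds on the logarithm. The lemma writes the arguments as $\lyapunovHelper[x^*,x]$ with the equilibrium value first, so I read it as the function of the current value~$x$
\begin{align*}
    g(x) \coloneqq x^*\Bigl(\frac{x}{x^*} - \ln\frac{x}{x^*} - 1\Bigr) = x - x^* - x^*\ln x + x^*\ln x^* .
\end{align*}
The constant terms cancel in any difference $g(x\pm 1) - g(x)$. Only the linear part and the logarithm remain, and the condition $x > 2$ guarantees that $x-1$, $x$ and $x+1$ are all positive.

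\textbf{First inequality.} I compute
\begin{align*}
    g(x+1) - g(x) = 1 - x^*\ln\Bigl(1 + \frac{1}{x}\Bigr).
\end{align*}
The right-hand side of the claim satisfies $1 - \frac{x^*}{x} + \frac{x^*}{x(x+1)} = 1 - \frac{x^*}{x+1}$. Since $x^* > 0$, it therefore suffices to show $\ln(1 + 1/x) \geq 1/(x+1)$. This is the standard bound $\ln(1+u) \geq \frac{u}{1+u}$ for $u > -1$, applied with $u = 1/x$. That bound follows from $\ln y \geq 1 - 1/y$, which holds for all $y > 0$ by concavity of the logarithm.

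\textbf{Second inequality.} Similarly,
\begin{align*}
    g(x-1) - g(x) = -1 + x^*\ln\Bigl(\frac{x}{x-1}\Bigr) = -1 + x^*\ln\Bigl(1 + \frac{1}{x-1}\Bigr).
\end{align*}
On the other side, $-\bigl(1 - \frac{x^*}{x} - \frac{x^*}{x(x-1)}\bigr) = -1 + x^*\frac{(x-1)+1}{x(x-1)} = -1 + \frac{x^*}{x-1}$. It therefore suffices to show $\ln(1 + v) \leq v$ for $v = 1/(x-1) > 0$, which is again standard.

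No step is a real obstacle; the only point needing care is the argument order of~$\lyapunovHelper$ and simplifying the two right-hand sides to $1 - \frac{x^*}{x+1}$ and $-1 + \frac{x^*}{x-1}$, after which both claims are one-line logarithm inequalities.
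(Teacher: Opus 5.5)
Your proof is correct and complete. You read the argument order of $f$ correctly: the appendix definition puts the target $x^*$ first, unlike the informal definition in the main text. Your simplification of the right-hand sides to $1-\frac{x^*}{x+1}$ and $-1+\frac{x^*}{x-1}$ is also right, so both claims reduce to $\frac{1}{x+1}\le\ln\bigl(1+\frac{1}{x}\bigr)$ and $\ln\bigl(1+\frac{1}{x-1}\bigr)\le\frac{1}{x-1}$.

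The paper gives no proof of this lemma; it imports it from \cite[Lemma~4.3]{FrGoKlKrPa2024}. Your argument is the standard one. It is equivalent to the mean-value view: since $\frac{\partial}{\partial x}f(x^*,x)=1-\frac{x^*}{x}$ is increasing in $x$, the forward difference is at most the derivative at $x+1$, and $f(x^*,x-1)-f(x^*,x)$ is at most minus the derivative at $x-1$. That view is what produces the ``derivative at $x$ plus error'' shape of the stated bounds. Your route also shows that $x>1$ suffices.
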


We use the following theorem to calculate the expectation of the sum of a random number of random variables.

\begin{theorem}[Wald's Equation {\protect\cite[Chapter 10.2, Lemma 9]{grimmett2020probability}}]\label{lem:Wald}
    Let $(X_t)_{t\in \N_{>0}}$ be a sequence of independent, identically distributed random variables over $\R$, and let $T$ be a stopping time with respect to the natural filtration of $X$. Then
    \begin{align*}
        \E{\sum_{t=1}^{T}{X_t}} & = \E{X_1}\cdot \E{T}.\qedhere
    \end{align*}
\end{theorem}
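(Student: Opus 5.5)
We use the standard indicator decomposition of the random sum. We implicitly assume the integrability conditions under which the statement is meaningful, namely $\E{|X_1|} < \infty$ and $\E{T} < \infty$. The cited statement omits them, but the equation can fail without them: with $\E{T} = \infty$ the right-hand side is undefined or infinite. For every outcome we rewrite
\begin{align*}
    \sum_{t=1}^{T} X_t = \sum_{t=1}^{\infty} X_t \cdot \indicator{T \geq t},
\end{align*}
which is a pointwise identity since $T$ is almost surely finite.

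The key structural step is that the event $\{T \geq t\} = \{T \leq t-1\}^{c}$ lies in $\sigma(X_1, \dots, X_{t-1})$. This holds because $T$ is a stopping time for the natural filtration of~$X$. Since the $X_t$ are independent, this event is independent of~$X_t$. Hence, for each $t \in \N_{>0}$,
\begin{align*}
    \E{X_t \cdot \indicator{T \geq t}} = \E{X_t} \cdot \Pr{T \geq t} = \E{X_1} \cdot \Pr{T \geq t},
\end{align*}
where the last equality uses that the $X_t$ are identically distributed. Summing over~$t$ and using $\sum_{t \geq 1} \Pr{T \geq t} = \E{T}$ for $\N$-valued~$T$ yields the claimed value $\E{X_1} \cdot \E{T}$.

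The main obstacle is justifying the exchange of expectation and infinite sum in
\begin{align*}
    \E{\sum_{t=1}^{\infty} X_t \indicator{T \geq t}} = \sum_{t=1}^{\infty} \E{X_t \indicator{T \geq t}}.
\end{align*}
We handle it by applying the same independence argument to $|X_t|$. By monotone convergence,
\begin{align*}
    \E{\sum_{t=1}^{\infty} |X_t| \indicator{T \geq t}} = \sum_{t=1}^{\infty} \E{|X_1|} \cdot \Pr{T \geq t} = \E{|X_1|} \cdot \E{T} < \infty.
\end{align*}
So the double sum is absolutely integrable, and Fubini's theorem (equivalently, dominated convergence applied to the partial sums) permits the interchange. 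This concludes the plan.
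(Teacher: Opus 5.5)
Your proof is correct and is essentially the argument behind the paper's citation: the paper gives no proof of its own but cites Grimmett--Stirzaker, whose proof uses exactly your decomposition $\sum_{t \geq 1} X_t \indicator{T \geq t}$, the independence of $\{T \geq t\} \in \sigma(X_1, \dots, X_{t-1})$ from $X_t$, and a Fubini justification via $\E{|X_1|} \cdot \E{T} < \infty$. You are also right to add the hypotheses $\E{|X_1|} < \infty$ and $\E{T} < \infty$, which belong to the standard form of Wald's equation but are missing from the paper's restatement.
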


We use the following concentration bound to bound the sum of exponential random variables such as the lengths of the steps of the process.

\begin{lemma}[{\protect\cite[Theorem~5.1]{janson2018tail}}]\label{lem:sumExponential}
    Let $X= \sum_{i=1}^{n}{X_i}$ with $X_i \sim \text{Exp}(a_i)$ independent. Further let $\mu \coloneqq \E{X} = \sum_{i=1}^{n}{\frac{1}{a_i}}$ and $a_* \coloneqq \min_i{a_i}$. Then for any $\lambda \in (0,1)$
    \begin{align*}
        \Pr{X \leq \lambda \mu} & \leq e^{-a_*\mu(\lambda - 1 - \ln(\lambda))}.\qedhere
    \end{align*}
\end{lemma}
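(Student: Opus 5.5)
The proof is a lower-tail Chernoff bound, using the Laplace transform of $-X$. The one non-routine step is a concavity argument that makes the bound depend on the rates only through $\mu$ and $a_*$. Fix $t \in \R_{>0}$. By Markov's inequality applied to $\eulerE^{-tX}$ and by independence of the $X_i$,
\begin{align*}
    \Pr{X \leq \lambda\mu} = \Pr{\eulerE^{-tX} \geq \eulerE^{-t\lambda\mu}} \leq \eulerE^{t\lambda\mu}\prod_{i=1}^{n}\E{\eulerE^{-tX_i}} = \exp\Bigl(t\lambda\mu - \sum_{i=1}^{n}\ln\bigl(1+\tfrac{t}{a_i}\bigr)\Bigr),
\end{align*}
using $\E{\eulerE^{-tX_i}} = \frac{a_i}{a_i+t}$ for $X_i \sim \mathrm{Exp}(a_i)$.

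Next I lower-bound $\sum_{i}\ln(1+t/a_i)$ in terms of $\mu = \sum_i 1/a_i$ and $a_*$; this is the main obstacle. Set $u_i \coloneqq t/a_i$ and $u_* \coloneqq t/a_*$, so that $u_i \in (0, u_*]$ because $a_i \geq a_*$. The function $u \mapsto \ln(1+u)$ is concave and vanishes at $0$. It therefore lies above its chord from $0$ to $u_*$ on the interval $[0,u_*]$, which gives $\ln(1+u_i) \geq \frac{u_i}{u_*}\ln(1+u_*)$. Since $u_i/u_* = a_*/a_i$, summing over $i$ yields
\begin{align*}
    \sum_{i=1}^{n}\ln\bigl(1+\tfrac{t}{a_i}\bigr) \geq a_*\mu\,\ln\bigl(1+\tfrac{t}{a_*}\bigr).
\end{align*}

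Substitute $s \coloneqq t/a_*$, so that $t\lambda\mu = a_*\mu\, s\lambda$. The bound becomes
\begin{align*}
    \Pr{X\leq\lambda\mu} \leq \exp\bigl(a_*\mu\,(s\lambda - \ln(1+s))\bigr) \quad\text{for all } s > 0.
\end{align*}
I optimize over $s$. The derivative of the exponent in $s$ vanishes when $\lambda = \frac{1}{1+s}$, that is, at $s = \frac{1}{\lambda}-1$. This value is admissible, i.e. strictly positive, precisely because $\lambda \in (0,1)$. Plugging it in gives $s\lambda = 1-\lambda$ and $\ln(1+s) = -\ln\lambda$. The exponent is therefore $a_*\mu(1-\lambda+\ln\lambda) = -a_*\mu(\lambda-1-\ln(\lambda))$, which is exactly the claimed bound.
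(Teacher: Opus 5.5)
Your proof is correct and complete: the Chernoff bound via $\mathrm{E}[e^{-tX_i}] = a_i/(a_i+t)$, the chord inequality $\ln(1+t/a_i) \geq (a_*/a_i)\ln(1+t/a_*)$ from concavity of $\ln(1+u)$ (the only place $a_*$ enters), and the choice $t = a_*(1/\lambda - 1)$, which is admissible exactly because $\lambda < 1$, together give the claimed exponent. The paper has no proof of its own to compare against, since it imports the statement from \cite[Theorem~5.1]{janson2018tail}; your argument is the standard Chernoff-plus-concavity proof of that result, so including it would simply make the paper self-contained.
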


\section{Deriving the Lyapunov function}\label{ap:model}

We aim to analyze the \IRIR process and show that it survives super-polynomially long on jumbled graphs in expectation. We start by analyzing some basic properties of the process assuming the graph is perfectly mixed, meaning all subsets of vertices are connected by as many edges as would be expected with a given density. Note that cliques are the only unweighted graphs which are actually perfectly mixed as the property enforces each pair of vertices to be connected with $\density$ edges. But jumbled graphs get very close to it and we show later on that the difference to the assumed perfectly mixed graph is not too big. Let $G=(V,E)$ be a graph with $\numberOfVertices$ vertices and density $\density$ and let $C$ be a \IRIR-process on $G$. For the analysis of the survival time we will look at the discrete version of $C$, but in order to get a feeling of the process, we look at the continuous version first. Assuming $G$ to be perfectly mixed, two disjoint subsets of vertices $X$ and $Y$ have exactly $\density |X|\cdot |Y|$ edges between them. As the infection rate is only relevant for infection over edges, it appears in the formulas always multiplied by the density. To make the terms easier we use the effective infection rates $\lae \coloneqq \la \density$ and $\lbe \coloneqq \lb \density$. Recall that \Ia{t}, \Ra{t}, \Ib{t}, and \Rb{t} denote the number of vertices in the respective state at time $t$. For perfectly mixed $G$, we get the following rates at which those values change, which are just the derivatives by $t$:
\begin{align}
    \frac{d \Ia{t}}{dt}= \lae \Ia{t} \Rb{t} - \ra \Ia{t} & \text{\quad \quad and \quad \quad} \frac{d \Ib{t}}{dt}= \lbe \Ib{t} \Ra{t} - \rb \Ib{t}\label{der:I} \\
    \frac{d\Ra{t}}{dt}= \ra \Ia{t} - \lbe \Ib{t} \Ra{t} & \text{\quad \quad and \quad \quad}\frac{d \Rb{t}}{dt}= \rb \Ib{t} - \lae \Ia{t} \Rb{t}.\label{der:R}
\end{align}

We are interested in the equilibrium values of that system. That are the values for \Ia{t}, \Ra{t}, \Ib{t}, and \Rb{t} such that all of the derivatives are 0. At those values the system tends to be relatively stable and we will also show that in most other states the system tends to drift in some sense towards the equilibrium (unless one of the infections died out). We denote the equilibrium values by \Iae, \Rae, \Ibe, and \Rbe. Setting both equations in \Cref{der:I} to 0 immediately gives us that $\Rbe = \frac{\ra}{\lae}$ and $\Rae = \frac{\rb}{\lbe}$. We then get the values for \Iae and \Ibe with \Cref{der:R}, by plugging in $\Rae$ and using that $\Iae+ \Rae+ \Ibe+ \Rbe=\numberOfVertices$. The resulting equilibrium values are:
\begin{align}
    \Rae = \frac{\rb}{\lbe} \text{;\quad}
    \Rbe = \frac{\ra}{\lae}\text{;\quad}
    \Iae = (\numberOfVertices-\Rae - \Rbe)\frac{\rb}{\ra +\rb}\text{;\quad}
    \Ibe = (\numberOfVertices-\Rae - \Rbe)\frac{\ra}{\ra +\rb}\label{eq:equi}.
\end{align}

During the process it is very important how close $\Ra{t}$ and $\Rb{t}$ are to their equilibrium values as this mainly decides the dynamics of the system. Therefore, we define $\da{t} \coloneqq \Ra{t} - \Rae$ and $\db{t} \coloneqq \Rb{t} - \Rbe$.

Note that this process is very symmetric in the sense that swapping all 1's in the index of a true equation with 2's and vice versa yields another true equation. We use this a lot in the coming analysis to only calculate one case and get the symmetric case for free.

Also note that these equilibrium points are only between 0 and $\numberOfVertices$ if $\Rae + \Rbe <n$. This inequality gives us an estimation of the epidemic threshold as it tells us that there exists an equilibrium other than the disease-free equilibrium (all states in which $\Ia{t}=\Ib{t}=0$ are equilibria as they cannot change anymore) if and only if it is fulfilled. Solving it for $\lae$ gives us the condition $\lae > (\numberOfVertices-\frac{\rb}{\lbe})^{-1}\ra$ (and a symmetric inequality for $\lbe$).

\subsection{Global stability of the equilibrium}

We now show that this equilibrium is globally stable if $\Rae + \Rbe <n$. To this end we define a Lyapunov function similar to the one in \cite{korobeinikov2002lyapunov} and \cite{FrGoKlKrPa2024} and show that the equilibrium is its global minimum and that its derivative is non-positive for all states (except for when one variable is zero as it is not defined then). We first define an auxiliary function \lyapunovHelper.

\begin{definition}\label{def:laypunocHelper}
    Let $\lyapunovHelper\colon (\R_{> 0})^2 \to \R$ be such that, for all $x,x^* \in \R_{>0}$, we have
    \begin{align*}
        \lyapunovHelper[x^*,x] & = x^* \left( \frac{x}{x^*} - \ln \frac{x}{x^*}-1\right).\qedhere
    \end{align*}
\end{definition}

Note that the derivative $\frac{\mathrm{d} f(x^*,x)}{\mathrm{d}x}=1- \frac{x^*}{x}$. Hence, for a given $x^* \in \R_{>0}$, the value $x = x^*$ is the only local optimum of $\lyapunovHelper[x^*,x]$, and it is a global minimum. The $x^*$ therefore acts like a target value and $\lyapunovHelper[x^*,x]$ gets larger the further $x$ is away from that target. It is common to use this function for infection processes and plug in the values \Ia{t}, \Ra{t}, \Ib{t}, and \Rb{t} and their respective equilibrium values. This gives us the following Lyapunov function:

\begin{definition}\label{def:lyapunovPotential}
    Let $G$ be a graph with $\numberOfVertices\in \N$ vertices and density $\density \in (0,1]$ and let \contactProcess be a \IRIR process on $G$ with $\Rae + \Rbe <n$.
    For all $t \in \R$, we define $\Lyapunov_t$ as
    \begin{align*}
        \Lyapunov_t & = \Lyapunov(\Ia{t},\Ra{t},\Ib{t},\Rb{t}) = \xa\lyapunovHelper[\Iae,\Ia{t}]+ \ya\lyapunovHelper[\Rae,\Ra{t}]+ \xb\lyapunovHelper[\Ibe,\Ib{t}]+ \yb\lyapunovHelper[\Rbe,\Rb{t}].\qedhere
    \end{align*}
\end{definition}

Note that outside of this section, we set the parameters as described in \Cref{eq:parameters}.

In this section, we aim to choose the parameters \xa, \ya, \xb, and \yb such that the derivative of the Lyapunov function is non-positive for all states. To give an insight on how the parameters are chosen, we first derive some conditions that these parameters have to fulfill in order to yield us a non-positive derivative everywhere.

Note that the derivative of $\Lyapunov_t$ by $t$ becomes 0 when $\Ra{t}=\Rae$ and $\Rb{t}=\Rbe$. The reason for that is that the equilibrium values are chosen such that they make the derivatives of the $I$'s 0. By definition of $\lyapunovHelper$, its derivative by the $R$'s is also 0 at their respective equilibrium.

We aim to find parameters such that this derivative is non-positive for all states. As we know that it is zero whenever $\Ra{t}=\Rae$ and $\Rb{t}=\Rbe$, we know that the derivative of this term with respect to $\Ra{t}$ has to be zero at those points. However, just taking the derivative by $\Ra{t}$ does not make much sense as only increasing or decreasing $\Ra{t}$ without changing the other terms moves us out of the valid state space as the four random variables have to add to $\numberOfVertices$. Hence, we take the derivative with respect to $\Ra{t}-\Ia{t}$ instead. We get the same effect by just replacing $\Ia{t}$ by $\numberOfVertices - \Ra{t}- \Rb{t}- \Ib{t}$ before taking the derivative by $\Ra{t}$. We get the following lemma.

\begin{restatable}{lemma}{derivativeR}\label{lem:derivativeR}
    Let $G$ be a perfectly mixed graph with $\numberOfVertices \in \N$ vertices and density $\density\in (0,1]$ and let \contactProcess be a \IRIR process on $G$ with $\Rae + \Rbe <n$. Consider all states with $\Ra{t}=\Rae$ and $\Rb{t}=\Rbe$ and consider the function $\Lyapunov_t$ from \Cref{def:lyapunovPotential} with the substitution $\Ia{t} = n - \Ib{t} - \Ra{t} - \Rb{t}$. Then we get
    \begin{align*}
        \left(\forall{\Ia{t}\in (0,n-\Rae-\Rbe)}:\frac{d^2\Lyapunov_t}{dt \cdot d \Ra{t}}=0\right) & \Leftrightarrow \xb =\frac{\ra+\rb}{\rb}\ya.\qedhere
    \end{align*}
\end{restatable}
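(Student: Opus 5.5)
The plan is to compute the time derivative of $\Lyapunov_t$ along the mean-field system \eqref{der:I}--\eqref{der:R} explicitly. We then substitute $\Ia{t} = n - \Ib{t} - \Ra{t} - \Rb{t}$, differentiate in $\Ra{t}$, and evaluate at $\Ra{t}=\Rae$, $\Rb{t}=\Rbe$. The result should factor as a product of a parameter-dependent constant and a term that depends on the state.

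First, using $\frac{\mathrm{d} f(x^*,x)}{\mathrm{d}x} = 1 - \frac{x^*}{x}$ and the chain rule, I write
\begin{align*}
\frac{d\Lyapunov_t}{dt} &= \xa\Bigl(1-\frac{\Iae}{\Ia{t}}\Bigr)\frac{d\Ia{t}}{dt} + \ya\Bigl(1-\frac{\Rae}{\Ra{t}}\Bigr)\frac{d\Ra{t}}{dt} + \xb\Bigl(1-\frac{\Ibe}{\Ib{t}}\Bigr)\frac{d\Ib{t}}{dt} + \yb\Bigl(1-\frac{\Rbe}{\Rb{t}}\Bigr)\frac{d\Rb{t}}{dt}.
\end{align*}
Since $\Rbe=\ra/\lae$ and $\Rae=\rb/\lbe$, \eqref{der:I} gives $\frac{d\Ia{t}}{dt}=\lae\Ia{t}\db{t}$ and $\frac{d\Ib{t}}{dt}=\lbe\Ib{t}\da{t}$. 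Hence
\begin{align*}
\frac{d\Lyapunov_t}{dt} = \xa\lae(\Ia{t}-\Iae)\db{t} + \xb\lbe(\Ib{t}-\Ibe)\da{t} + \ya\frac{\da{t}}{\Ra{t}}\bigl(\ra\Ia{t}-\lbe\Ib{t}\Ra{t}\bigr) + \yb\frac{\db{t}}{\Rb{t}}\bigl(\rb\Ib{t}-\lae\Ia{t}\Rb{t}\bigr).
\end{align*}

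Next, I substitute $\Ia{t}=n-\Ib{t}-\Ra{t}-\Rb{t}$ and differentiate with respect to $\Ra{t}$. Every summand is a product with a factor $\da{t}$ or $\db{t}$. The variable $\db{t}$ does not depend on $\Ra{t}$ and vanishes at the evaluation point, so the two summands carrying $\db{t}$ contribute nothing. For the summands carrying $\da{t}$, the product rule leaves only the term in which $\da{t}$ itself is differentiated, because $\frac{d\da{t}}{d\Ra{t}}=1$ and $\da{t}=0$ at the point. This gives
\begin{align*}
\frac{d^2\Lyapunov_t}{dt\,d\Ra{t}} = \xb\lbe(\Ib{t}-\Ibe) + \frac{\ya}{\Rae}\bigl(\ra\Ia{t}-\lbe\Ib{t}\Rae\bigr).
\end{align*}
Set $S\coloneqq n-\Rae-\Rbe=\Iae+\Ibe$. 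Using $\Ia{t}=S-\Ib{t}$, $1/\Rae=\lbe/\rb$ and $\Ibe = S\ra/(\ra+\rb)$ from \eqref{eq:equi}, a short rearrangement turns this into
\begin{align*}
\lbe\Bigl(\xb-\frac{\ra+\rb}{\rb}\ya\Bigr)\bigl(\Ib{t}-\Ibe\bigr).
\end{align*}
The main bookkeeping obstacle is checking that the constant term collapses to exactly $-\Ibe$ times the same coefficient. That is where the value of $\Ibe$ enters.

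Finally, I conclude both directions. If $\xb=\frac{\ra+\rb}{\rb}\ya$, the expression vanishes identically. Conversely, as $\Ia{t}$ ranges over $(0,S)$, the value $\Ib{t}=S-\Ia{t}$ ranges over an open interval. This interval is not the single point $\Ibe$, and $\lbe>0$, so vanishing for all such $\Ia{t}$ forces the coefficient to be zero.
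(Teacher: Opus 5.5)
Your proof is correct and follows essentially the same route as the paper: compute $\frac{d\Lyapunov_t}{dt}$ along the mean-field system, substitute for $\Ia{t}$, differentiate in $\Ra{t}$ at $\Ra{t}=\Rae$, $\Rb{t}=\Rbe$, and obtain a linear function of $\Ib{t}$ whose vanishing on an open interval forces $\xb=\frac{\ra+\rb}{\rb}\ya$. Working from the $\da{t}$/$\db{t}$-factored form and using the product rule at $\da{t}=\db{t}=0$ is a tidy shortcut: it gives $\lbe\bigl(\xb-\frac{\ra+\rb}{\rb}\ya\bigr)(\Ib{t}-\Ibe)$ directly, which is exactly the paper's linear expression in $\Ib{t}$ written in factored form.
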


\begin{proof}
    We start by calculating the derivative of the Lyapunov function.

    \begin{align*}
        \frac{d\Lyapunov_t}{dt}= & \xa \frac{d\lyapunovHelper[\Iae,\Ia{t}]}{dt}+\ya \frac{d\lyapunovHelper[\Rae,\Ra{t}]}{dt}+\xb \frac{d\lyapunovHelper[\Ibe,\Ib{t}]}{dt}+\yb \frac{d\lyapunovHelper[\Rbe,\Rb{t}]}{dt} \\
        =           & \xa \frac{d\lyapunovHelper[\Iae,\Ia{t}]}{d\Ia{t}}\frac{d\Ia{t}}{dt}+\ya \frac{d\lyapunovHelper[\Rae,\Ra{t}]}{d\Ra{t}}\frac{d\Ra{t}}{dt}                                           \\
                    & +\xb \frac{d\lyapunovHelper[\Ibe,\Ib{t}]}{d\Ib{t}}\frac{d\Ib{t}}{dt}+\yb \frac{d\lyapunovHelper[\Rbe,\Rb{t}]}{d\Rb{t}}\frac{d\Rb{t}}{dt}                                           \\
        =           & \xa(1-\frac{\Iae}{\Ia{t}})(\lae \Ia{t} \Rb{t}- \ra \Ia{t})                                                                                                                      \\
                    & +\ya(1-\frac{\Rae}{\Ra{t}})(\ra \Ia{t}-\lbe \Ib{t}\Ra{t})                                                                                                                       \\
                    & +\xb(1-\frac{\Ibe}{\Ib{t}})(\lbe \Ib{t} \Ra{t}- \rb \Ib{t})                                                                                                                     \\
                    & +\yb(1-\frac{\Rbe}{\Rb{t}})(\rb \Ib{t}-\lae \Ia{t}\Rb{t})                                                                                                                       \\
        =           & \xa \lae \Ia{t} \Rb{t} - \xa \ra \Ia{t} - \xa \lae \Iae \Rb{t} + \xa \ra \Iae                                                                                                   \\
                    & +\ya \ra \Ia{t} -\ya \lbe \Ib{t} \Ra{t} - \ya \frac{\ra \rb}{\lbe} \frac{\Ia{t}}{\Ra{t}}+\ya \rb \Ib{t}                                                                         \\
                    & +\xb \lbe \Ib{t} \Ra{t} - \xb \rb \Ib{t} - \xb \lbe \Ibe \Ra{t} + \xb \rb \Ibe                                                                                                  \\
                    & +\yb \rb \Ib{t} -\yb \lae \Ia{t} \Rb{t} - \yb \frac{\ra \rb}{\lae} \frac{\Ib{t}}{\Rb{t}}+\yb \ra \Ia{t}                                                                         \\
        =           & \xa \Iae (\ra - \lae \Rb{t}) + \xb \Ibe (\rb - \lbe \Ra{t})\numberthis \label{eq:derivative}                                                                                    \\
                    & +\ra \Ia{t}(\ya + \yb - \xa) +\rb \Ib{t}(\ya + \yb - \xb)                                                                                                                       \\
                    & +\Ia{t}\left(\lae (\xa-\yb)\Rb{t}-\ya \frac{\ra \rb}{\lbe \Ra{t}}\right)                                                                                                        \\
                    & +\Ib{t}\left(\lbe (\xb-\ya)\Ra{t}-\yb \frac{\ra \rb}{\lae \Rb{t}}\right)                                                                                                        \\
    \end{align*}

    We now replace $\Ia{t}$ by $\numberOfVertices - \Ra{t}- \Rb{t}- \Ib{t}$ before taking the derivative by $\Ra{t}$ of \Cref{eq:derivative}. We get

    \begin{align*}
        \frac{d^2\Lyapunov_t}{dt \cdot d\Ra{t}}= & 0-\lbe \xb \Ibe                                                                                \\
                                     & -\ra (\ya+\yb-\xa)+0                                                                           \\
                                     & -\lae(\xa-\yb)\Rb{t}+\frac{\ya \ra \rb}{\lbe}\frac{\numberOfVertices- \Rb{t}-\Ib{t}}{\Ra{t}^2} \\
                                     & +\lbe (\xb-\ya)\Ib{t}+0.
    \end{align*}

    We now set this derivative to 0 and set $\Ra{t}=\Rae = \frac{\rb}{\lbe}$ and $\Rb{t}=\Rbe = \frac{\ra}{\lae}$. Further let $I^*=\numberOfVertices - \Rae - \Rbe$. Then $\Ibe=\frac{\ra}{\ra+\rb}I^*$. We get

    \begin{align*}
        0= & -\lbe \frac{\ra}{\ra+\rb}\xb I^* - \ra(\ya+\yb-\xa)-\ra(\xa-\yb)                                                                   \\
           & +\frac{\ya\ra \lbe}{\rb}(I^*-\Ib{t}+\frac{\rb}{\lbe}) + \lbe(\xb-\ya)\Ib{t}                                                        \\
        =  & (-\frac{\ra}{\rb}\ya+\xb-\ya)\lbe\Ib{t} +(-\frac{\ra}{\ra+\rb}\xb+\frac{\ra}{\rb}\ya)\lbe I^*\numberthis \label{eq:derivative_R1}.
    \end{align*}

    The right hand side of \Cref{eq:derivative_R1} is a linear equation of $\Ib{t}$. In order for this to be 0 for all $\Ib{t}$, both the coefficient of $\Ib{t}$ and the added constant have to be $0$. Both of those conditions are equivalent to $\xb = \frac{\ra +\rb}{\rb}\ya$ given that \lbe is positive. This concludes the proof.
\end{proof}

Doing the symmetric calculations for the derivative regarding $\Rb{t}$ also gives us the condition $\xa = \frac{\ra +\rb}{\ra}\yb$. Hence in order for the derivative of the Lyapunov function to be non-positive everywhere, the following condition needs to be met:
\begin{align}
    \xb = \frac{\ra +\rb}{\rb}\ya & \text{\quad and \quad}
    \xa = \frac{\ra +\rb}{\ra}\yb.\label{eq:condition_x}
\end{align}

Using these equations, we simplify $\frac{d \Lyapunov_t}{d t}$.

\begin{restatable}{lemma}{lastDegree}\label{lem:lastDegree}
    Let $G$ be a perfectly mixed graph with $\numberOfVertices \in \N$ vertices and density $\density\in (0,1]$ and let \contactProcess be a \IRIR process on $G$ with $\Rae + \Rbe <n$. Let \Cref{eq:condition_x} be true. Then
    \begin{align*}
        \frac{d \Lyapunov_t}{d t}= & -\lbe\frac{\ra}{\rb}\ya \da{t}\left(\Ia{t} \frac{\da{t}}{\Ra{t}}+(\da{t}+\db{t})\right)          \\
                      & -\lae\frac{\rb}{\ra}\yb \db{t}\left(\Ib{t} \frac{\db{t}}{\Rb{t}}+(\da{t}+\db{t})\right).\qedhere
    \end{align*}
\end{restatable}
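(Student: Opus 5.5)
We start from the expanded form \Cref{eq:derivative} of $\frac{d\Lyapunov_t}{dt}$ obtained in the proof of \Cref{lem:derivativeR} and substitute the two relations of \Cref{eq:condition_x}, namely $\xb=\frac{\ra+\rb}{\rb}\ya$ and $\xa=\frac{\ra+\rb}{\ra}\yb$. This gives the coefficient identities
\begin{align*}
    \xa-\yb=\frac{\rb}{\ra}\yb, \qquad \xb-\ya=\frac{\ra}{\rb}\ya, \qquad \ya+\yb-\xa=\ya-\frac{\rb}{\ra}\yb, \qquad \ya+\yb-\xb=\yb-\frac{\ra}{\rb}\ya .
\end{align*}
We then rewrite every occurrence of $\Ra{t}$ and $\Rb{t}$ as $\Rae+\da{t}$ and $\Rbe+\db{t}$, and use the equilibrium relations $\lae\Rbe=\ra$ and $\lbe\Rae=\rb$.

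Under this substitution the first line of \Cref{eq:derivative} becomes $-\xa\Iae\lae\db{t}-\xb\Ibe\lbe\da{t}$. The $\Ia{t}$-terms are collected into
\begin{align*}
    \Ia{t}\Bigl(\ra(\ya-\tfrac{\rb}{\ra}\yb)+\lae\tfrac{\rb}{\ra}\yb(\Rbe+\db{t})-\ya\tfrac{\ra\rb}{\lbe\Ra{t}}\Bigr).
\end{align*}
Since $\lae\frac{\rb}{\ra}\yb\Rbe=\rb\yb$, the $\yb$-constants cancel. What remains is $\Ia{t}\bigl(\ra\ya(1-\frac{\Rae}{\Ra{t}})+\lae\frac{\rb}{\ra}\yb\db{t}\bigr)=\Ia{t}\bigl(\ra\ya\frac{\da{t}}{\Ra{t}}+\lae\frac{\rb}{\ra}\yb\db{t}\bigr)$. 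Symmetrically, the $\Ib{t}$-terms reduce to $\Ib{t}\bigl(\rb\yb\frac{\db{t}}{\Rb{t}}+\lbe\frac{\ra}{\rb}\ya\da{t}\bigr)$.

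The step we expect to be the main obstacle is that these expressions are linear in $\Ia{t}$ and $\Ib{t}$ with signs that do not yet match the target. We therefore eliminate the free infected count using $n=\Iae+\Ibe+\Rae+\Rbe$ together with $\Ia{t}+\Ib{t}+\Ra{t}+\Rb{t}=n$. This yields
\begin{align*}
    \Ia{t}-\Iae+\Ib{t}-\Ibe=-(\da{t}+\db{t}),
\end{align*}
which produces the common factor $(\da{t}+\db{t})$ in the target. To apply it, we express the constant terms $-\xa\lae\Iae\db{t}$ and $-\xb\lbe\Ibe\da{t}$ through $\Iae$ and $\Ibe$, using $\Iae/\Ibe=\rb/\ra$. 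With \Cref{eq:condition_x} this gives $\xa\lae\Iae=\lae\frac{\rb}{\ra}\yb(\Iae+\Ibe)$ and $\xb\lbe\Ibe=\lbe\frac{\ra}{\rb}\ya(\Iae+\Ibe)$.

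Combining the pieces, the coefficient of $\lbe\frac{\ra}{\rb}\ya\da{t}$ becomes $\Ib{t}-\Iae-\Ibe$ and that of $\lae\frac{\rb}{\ra}\yb\db{t}$ becomes $\Ia{t}-\Iae-\Ibe$. The remaining terms are $\ra\ya\Ia{t}\frac{\da{t}}{\Ra{t}}$ and $\rb\yb\Ib{t}\frac{\db{t}}{\Rb{t}}$. We then expect to rewrite $\ra=\lbe\frac{\ra}{\rb}\Rae$, so that the first term carries the prefactor $\lbe\frac{\ra}{\rb}\ya$. Regrouping, and replacing $\Ia{t}+\Ib{t}-\Iae-\Ibe$ by $-(\da{t}+\db{t})$ where needed, should then give exactly the claimed form. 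The delicate point is the bookkeeping that turns the cross terms (for instance the $\Ia{t}\da{t}$-type terms) into $\Ia{t}\frac{\da{t}^2}{\Ra{t}}$ via $\Rae/\Ra{t}=1-\da{t}/\Ra{t}$, which leaves the prefactor $-\lbe\frac{\ra}{\rb}\ya\da{t}$. We would verify the final identity by expanding both sides as polynomials in $\da{t}$, $\db{t}$, $\Ia{t}$, $\Ib{t}$ after multiplying through by $\Ra{t}\Rb{t}$. As in the paper, the $\db{t}$-part follows from the $\da{t}$-part by the $1\leftrightarrow2$ symmetry.
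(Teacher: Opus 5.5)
Your proposal is correct and follows essentially the same route as the paper. Both arguments first bring \Cref{eq:derivative} into the form grouped by $\da{t}$ and $\db{t}$: the paper subtracts the expression evaluated at $\Ra{t}=\Rae$, $\Rb{t}=\Rbe$, while you substitute directly and cancel the $\yb$-constants. Both then use \Cref{eq:condition_x}, $\Iae/\Ibe=\rb/\ra$, the conservation identity $\Ia{t}+\Ib{t}-\Iae-\Ibe=-(\da{t}+\db{t})$, and $\Rae/\Ra{t}=1-\da{t}/\Ra{t}$. The final bookkeeping you hedge on goes through directly, since the $\da{t}$-part equals $\lbe\frac{\ra}{\rb}\ya\da{t}\bigl(\Ia{t}+\Ib{t}-\Iae-\Ibe-\Ia{t}\frac{\da{t}}{\Ra{t}}\bigr)$, which is exactly the claimed term.
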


\begin{proof}
    We start with \Cref{eq:derivative} and rewrite $\Ra{t}=\Rae+\da{t}$ and $\Rb{t}=\Rbe+\db{t}$ in order to calculate the derivative based on how far the state is away from the diagonal where it is zero. Recall that plugging $\Ra{t}=\Rae$ and $\Rb{t}=\Rbe$ into \Cref{eq:derivative} makes it zero for all \Ia{t} and \Ib{t}. Hence we can take \Cref{eq:derivative} and subtract from it \Cref{eq:derivative} with $\Ra{t}=\Rae$ and $\Rb{t}=\Rbe$. We get using $\frac{1}{x}-\frac{1}{x+c}=\frac{c}{x(c+x)}$

    \begin{align*}
        \frac{d \Lyapunov_t}{dt}= & -\lae \xa \Iae \db{t} -\lbe \xb \Ibe \da{t}                                                                                      \\
                    & +0+0                                                                                                                             \\
                    & +\lae (\xa-\yb)\Ia{t} \db{t} + \frac{\ra \ya \Ia{t} \da{t}}{\Ra{t}}                                                              \\
                    & +\lbe (\xb-\ya)\Ib{t} \da{t} + \frac{\rb \yb \Ib{t} \db{t}}{\Rb{t}}                                                              \\
        =           & \da{t}\left(-\lbe \xb \Ibe + \frac{\ra \ya \Ia{t}}{\Ra{t}}+\lbe (\xb-\ya)\Ib{t}\right)                                           \\
                    & + \db{t}\left(-\lae \xa \Iae + \frac{\rb \yb \Ib{t}}{\Rb{t}}+\lae (\xa-\yb)\Ia{t}\right)\numberthis \label{eq:derivative_delta}.
    \end{align*}

    For simplicity we now only rearrange the factor of $\da{t}$ as the other one is symmetric by swapping all 1's and 2's. We again use $I^*=\numberOfVertices - \Rae -\Rbe$ and replace $\Ib{t}= \numberOfVertices - \Ra{t} - \Rb{t}- \Ia{t}= I^* - \Ia{t} - (\da{t}+\db{t})$. We also use $\frac{1}{x+c}=\frac{1}{x}-\frac{c}{x(c+x)}$ again. Finally we use \Cref{eq:condition_x} to replace $\xb = \frac{\ra+\rb}{\rb}\ya$. We get

    \begin{align*}
        \da{t} & \left(-\lbe \xb \Ibe + \frac{\ra \ya \Ia{t}}{\Ra{t}}+\lbe (\xb-\ya)\Ib{t}\right)                                                                                        \\
        =      & \da{t}\left(-\lbe \frac{\ra}{\ra+\rb}\xb I^* + \frac{\ra \ya \Ia{t}}{\Rae+\da{t}}+\lbe (\xb-\ya)(I^* - \Ia{t} - (\da{t}+\db{t}))\right)                                 \\
        =      & \lbe \da{t}\left(- \frac{\ra}{\ra+\rb}\xb I^* + \frac{\ra \ya \Ia{t}}{\rb} - \frac{\ra \ya \Ia{t} \da{t}}{\rb \Ra{t}}+ (\xb-\ya)(I^* - \Ia{t} - (\da{t}+\db{t}))\right) \\
        =      & \lbe \da{t}\left(\Ia{t}(\frac{\ra}{\rb}\ya - \frac{\ra}{\rb}\ya \frac{\da{t}}{\Ra{t}}-\xb+\ya )-(\da{t}+\db{t})(\xb-\ya)+I^*(-\frac{\ra}{\ra+\rb}\xb+\xb-\ya)\right)    \\
        =      & \lbe \da{t}\left(\Ia{t}(- \frac{\ra}{\rb}\ya \frac{\da{t}}{\Ra{t}})-(\da{t}+\db{t})(\frac{\ra}{\rb}\ya)+I^*(0)\right)                                                   \\
        =      & -\lbe\frac{\ra}{\rb}\ya \da{t}\left(\Ia{t} \frac{\da{t}}{\Ra{t}}+(\da{t}+\db{t})\right)\numberthis\label{eq:derivative_delta_1}.
    \end{align*}

    Together with the symmetric calculation for the $\db{t}$ term that concludes the proof.\qedhere
\end{proof}

Note that multiplying all of \xa, \ya, \xb, and \yb by a positive number just multiplies the derivative of the Lyapunov function by that number as well. This does not change the fact whether the derivative is non-positive everywhere. Hence there will be infinitely many solutions for those parameters. Compressing those into one by normalizing in some way gets rid of one degree of freedom. Together with \Cref{eq:condition_x} that means that we have only one degree of freedom left.

Resolving the last degree of freedom by setting $\lbe\frac{\ra}{\rb}\ya= \lae\frac{\rb}{\ra}\yb$ makes all the terms in \Cref{lem:lastDegree} non-negative. We then normalize the parameters to make the terms easy to read and divide all of them by $n$ in order to make them constant for the $\lambda$'s and $\varrho$'s we usually consider. This gives us the following values for the parameters of $\Lyapunov_t$.
\begin{align*}
    \xa=\frac{\ra+\rb}{\rb\lae n}\text{;\quad} \ya=\frac{\rb}{\ra\lbe n} & \text{;\quad} \xb=\frac{\ra+\rb}{\ra\lbe n} \text{;\quad}
    \yb=\frac{\ra}{\rb\lae n}\numberthis\label{eq:parameters}.
\end{align*}

Setting the parameters in \Cref{lem:lastDegree} as in \Cref{eq:parameters} gives us.

\begin{restatable}{corollary}{derivativeF}\label{lem:derivativeF}
    Let $G$ be a perfectly mixed graph with $\numberOfVertices \in \N$ vertices and density $\density\in (0,1]$ and let \contactProcess be a \IRIR process on $G$ with $\Rae + \Rbe <n$. Let the parameters be chosen as in \Cref{eq:parameters}. Then
    \begin{align*}
        \frac{d \Lyapunov_t}{dt} & =-\frac{1}{n}\left(\frac{\Ia{t}}{\Ra{t}}\da{t}^2+\frac{\Ib{t}}{\Rb{t}}\db{t}^2+(\da{t}+\db{t})^2\right).\qedhere
    \end{align*}
\end{restatable}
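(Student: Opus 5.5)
The plan is to substitute the parameter values from \Cref{eq:parameters} directly into the expression from \Cref{lem:lastDegree}. This is legitimate because these values satisfy \Cref{eq:condition_x}, which is the only hypothesis of that lemma besides $\Rae+\Rbe<n$. Checking \Cref{eq:condition_x} takes one line:
\begin{align*}
    \frac{\ra+\rb}{\rb}\ya = \frac{\ra+\rb}{\rb}\cdot\frac{\rb}{\ra\lbe n} = \frac{\ra+\rb}{\ra\lbe n} = \xb,
\end{align*}
and symmetrically $\frac{\ra+\rb}{\ra}\yb = \frac{\ra+\rb}{\rb\lae n} = \xa$.

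Next, I compute the two prefactors appearing in \Cref{lem:lastDegree}:
\begin{align*}
    \lbe\frac{\ra}{\rb}\ya = \lbe\frac{\ra}{\rb}\cdot\frac{\rb}{\ra\lbe n} = \frac{1}{n}
    \quad\text{and}\quad
    \lae\frac{\rb}{\ra}\yb = \frac{1}{n}.
\end{align*}
Both equal $\frac1n$, which matches the intended choice $\lbe\frac{\ra}{\rb}\ya= \lae\frac{\rb}{\ra}\yb$ together with the normalization by $n$.

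Plugging these into \Cref{lem:lastDegree} gives
\begin{align*}
    \frac{d\Lyapunov_t}{dt} = -\frac1n\left(\frac{\Ia{t}}{\Ra{t}}\da{t}^2 + \da{t}(\da{t}+\db{t}) + \frac{\Ib{t}}{\Rb{t}}\db{t}^2 + \db{t}(\da{t}+\db{t})\right).
\end{align*}
The two cross terms combine as $\da{t}(\da{t}+\db{t})+\db{t}(\da{t}+\db{t}) = (\da{t}+\db{t})^2$, which yields exactly the claimed expression.

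The argument is a pure algebraic verification with no real obstacle. The only step needing care is confirming that the chosen parameters satisfy \Cref{eq:condition_x}, so that \Cref{lem:lastDegree} applies, and that the two prefactors coincide so the cross terms merge into a perfect square.
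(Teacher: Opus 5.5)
Your proposal is correct and matches the paper's proof, which consists of substituting the parameters from \Cref{eq:parameters} into \Cref{lem:lastDegree}. The checks you add are exactly the details the paper leaves implicit: that the parameters satisfy \Cref{eq:condition_x}, that both prefactors equal $\frac{1}{n}$, and that the cross terms combine into $(\delta_{1,t}+\delta_{2,t})^2$.
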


\subsection{Shifting the $R$ values}\label{sec:shiftR}

We aim to use the derivative of the potential to apply drift arguments to get a super-polynomial survival time bound similar to \cite{FrGoKlKrPa2024}. To this end we want a high potential to correlate with states that are close to dying out. In the current potential $F$ that is not the case as the potential gets high when any of the four values $\Ia{t}, \Ib{t}, \Ra{t}, \Rb{t}$ becomes small. However small values of $\Ra{t}$ and $\Rb{t}$ do not mean that the infection is close to dying out. To change that, we adjust the potential in a similar way to \cite{FrGoKlKrPa2024} by shifting the values of $\Ra{t}$ and $\Rb{t}$ up by a constant. As $f(x',x)$ only becomes very big when $x$ goes to $0$, shifting both $x$ and $x'$ up by a constant keeps the desired properties while preventing it from becoming too big. We shift by $x'$ for convenience. Note that we calculated the coefficients before that made the drift non-positive everywhere by looking at the derivative around the equilibria of $\Ra{t}$ and $\Rb{t}$. Shifting them by their equilibrium value halves the derivative around the equilibrium, hence we have to multiply these terms by 2 in order to get similar results as before. We get the following adjusted potential function.

\begin{definition}\label{def:lyapunovPotential2}
    Let $G$ be a graph with $\numberOfVertices \in \N$ vertices and density $\density\in (0,1]$ and let \contactProcess be a \IRIR process on $G$ with $\Rae + \Rbe <n$.
    For all $t \in \R$, we define $\Lyapunovb_t$ as
    \begin{align*}
        \Lyapunovb_t & = \Lyapunovb(\Ia{t},\Ra{t},\Ib{t},\Rb{t})                                                                                                                   \\
                     & = \xa\lyapunovHelper[\Iae,\Ia{t}]+ 2\ya\lyapunovHelper[2\Rae,\Ra{t}+\Rae]+ \xb\lyapunovHelper[\Ibe,\Ib{t}]+ 2\yb\lyapunovHelper[2\Rbe,\Rb{t}+\Rbe].\qedhere
    \end{align*}
\end{definition}

We now calculate the derivative of $\Lyapunovb$ using $\da{t}$ and $\db{t}$.

\begin{restatable}{lemma}{derivativeK}\label{lem:derivativeK}
    Let $G$ be a perfectly mixed graph with $\numberOfVertices \in \N$ vertices and density $\density\in (0,1]$ and let \contactProcess be a \IRIR process on $G$ with $\Rae + \Rbe <n$. Let the parameters be chosen as in \Cref{eq:parameters}. Then
    \begin{align*}
        \frac{d \Lyapunovb_t}{dt} & =-\frac{1}{n}\left(\frac{\da{t}^2}{\Ra{t}+\rb/\lbe}\left(\Ia{t}+\frac{\rb}{\ra}\Ib{t}\right)+\frac{\db{t}^2}{\Rb{t}+\ra/\lae}\left(\Ib{t}+\frac{\ra}{\rb}\Ia{t}\right)+(\da{t}+\db{t})^2\right).\qedhere
    \end{align*}
\end{restatable}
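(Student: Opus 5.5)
The plan is not to redo the whole computation of \Cref{lem:derivativeR} and \Cref{lem:lastDegree} for $\Lyapunovb$. Instead I will express $\frac{d\Lyapunovb_t}{dt}$ as $\frac{d\Lyapunov_t}{dt}$ plus a correction and reuse \Cref{lem:derivativeF}. The $I$-terms of $\Lyapunovb$ and $\Lyapunov$ coincide, so only the two $R$-terms differ.

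For the $R$-terms I use the chain rule and $\frac{\mathrm{d}\lyapunovHelper[x^*,x]}{\mathrm{d}x}=1-\frac{x^*}{x}$. This gives
\begin{align*}
\frac{\mathrm{d}}{\mathrm{d}\Ra{t}}\, 2\ya\lyapunovHelper[2\Rae,\Ra{t}+\Rae] = 2\ya\Bigl(1-\frac{2\Rae}{\Ra{t}+\Rae}\Bigr)=\frac{2\ya\da{t}}{\Ra{t}+\Rae},
\end{align*}
whereas in $\Lyapunov$ the corresponding factor was $\ya\bigl(1-\frac{\Rae}{\Ra{t}}\bigr)=\frac{\ya\da{t}}{\Ra{t}}$. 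The difference of the two factors is
\begin{align*}
\ya\da{t}\Bigl(\frac{2}{\Ra{t}+\Rae}-\frac{1}{\Ra{t}}\Bigr)=\frac{\ya\da{t}^2}{\Ra{t}(\Ra{t}+\Rae)}.
\end{align*}
Hence
\begin{align*}
\frac{d\Lyapunovb_t}{dt}=\frac{d\Lyapunov_t}{dt}+\frac{\ya\da{t}^2}{\Ra{t}(\Ra{t}+\Rae)}\frac{d\Ra{t}}{dt}+\frac{\yb\db{t}^2}{\Rb{t}(\Rb{t}+\Rbe)}\frac{d\Rb{t}}{dt}.
\end{align*}

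Next I plug in $\frac{d\Ra{t}}{dt}=\ra\Ia{t}-\lbe\Ib{t}\Ra{t}$ from \Cref{der:R} and $\ya=\frac{\rb}{\ra\lbe n}$ from \Cref{eq:parameters}. Using $\Rae=\rb/\lbe$, the first correction term becomes
\begin{align*}
\frac{1}{n}\frac{\da{t}^2}{\Ra{t}+\Rae}\Bigl(\frac{\Rae\Ia{t}}{\Ra{t}}-\frac{\rb}{\ra}\Ib{t}\Bigr).
\end{align*}
I combine this with the summand $-\frac{1}{n}\frac{\Ia{t}}{\Ra{t}}\da{t}^2$ from \Cref{lem:derivativeF}. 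The two $\Ia{t}$ contributions add up to
\begin{align*}
\frac{\Ia{t}\da{t}^2}{n\Ra{t}}\Bigl(\frac{\Rae}{\Ra{t}+\Rae}-1\Bigr)=-\frac{1}{n}\frac{\Ia{t}\da{t}^2}{\Ra{t}+\Rae}.
\end{align*}
Together with the $\Ib{t}$ contribution this yields $-\frac{1}{n}\frac{\da{t}^2}{\Ra{t}+\rb/\lbe}\bigl(\Ia{t}+\frac{\rb}{\ra}\Ib{t}\bigr)$, which is the first term of the claim.

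The $\db{t}$ part follows by the $1\leftrightarrow 2$ symmetry noted earlier, using $\yb=\frac{\ra}{\rb\lae n}$ and $\Rbe=\ra/\lae$. The term $-\frac{1}{n}(\da{t}+\db{t})^2$ of \Cref{lem:derivativeF} is untouched by the correction and carries over directly.

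The only delicate step is the algebraic simplification of the correction term. In particular, the cancellation $\frac{\Rae}{\Ra{t}+\Rae}-1=-\frac{\Ra{t}}{\Ra{t}+\Rae}$ is what removes the $1/\Ra{t}$ singularity. The rest is bookkeeping.
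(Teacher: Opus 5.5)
Your proof is correct, but it takes a different route from the paper. The paper recomputes $\frac{d\Lyapunovb_t}{dt}$ from scratch. It applies the chain rule to all four terms of $\Lyapunovb$, groups the result into a $\da{t}$-coefficient and a $\db{t}$-coefficient, and inserts \Cref{eq:parameters}. It then redoes for $\Lyapunovb$ the substitution $\Ib{t}=I^*-\Ia{t}-(\da{t}+\db{t})$ together with $\Ibe=\frac{\ra}{\ra+\rb}I^*$, which is the same mechanism that cancels the $I^*$-terms in \Cref{lem:lastDegree}.

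You instead treat $\Lyapunovb$ as a perturbation of $\Lyapunov$ and reuse \Cref{lem:derivativeF}. Your correction term $\frac{\ya\da{t}^2}{\Ra{t}(\Ra{t}+\Rae)}\frac{d\Ra{t}}{dt}$ checks out, as do its simplification via $\ya=\frac{\rb}{\ra\lbe n}$ and $\Rae=\rb/\lbe$, its combination with $-\frac{1}{n}\frac{\Ia{t}}{\Ra{t}}\da{t}^2$, and the symmetric $\db{t}$ part.

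Your route is shorter and more illuminating. It shows that the doubled, shifted $R$-term has the same $\Ra{t}$-derivative as the original one, up to an error that is exactly quadratic in $\da{t}$. Hence the first-order conditions \Cref{eq:condition_x}, which drive the cancellation of the $I^*$-terms, are inherited for free, and only the coefficients of $\da{t}^2$ and $\db{t}^2$ change. This makes precise the paper's heuristic that the factor $2$ compensates for the halved derivative at the equilibrium.

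The paper's direct computation, in turn, is self-contained and never divides by $\Ra{t}$ or $\Rb{t}$. It is therefore valid on the whole domain of $\Lyapunovb$, including states with $\Ra{t}=0$ or $\Rb{t}=0$. There $\Lyapunov$, and hence your intermediate identities, are undefined. You should add one sentence covering these states, for example by continuity: both sides of the claimed identity are rational functions of the state with no singularity at $\Ra{t}=0$ or $\Rb{t}=0$. This case is not vacuous, since such states fall under the first case of \Cref{lem:drift}, where \Cref{lem:derivativeK} is applied.
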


\begin{proof}

    By definition of $\Lyapunovb$, we get

    \begin{align*}
        \frac{d \Lyapunovb_t}{d t}= & \xa(1-\frac{\Iae}{\Ia{t}})(\lae \Ia{t} \Rb{t}- \ra \Ia{t})                                                                                                       \\
                       & +2\ya(1-\frac{2\Rae}{\Ra{t}+\Rae})(\ra \Ia{t}-\lbe \Ib{t}\Ra{t})                                                                                                 \\
                       & +\xb(1-\frac{\Ibe}{\Ib{t}})(\lbe \Ib{t} \Ra{t}- \rb \Ib{t})                                                                                                      \\
                       & +2\yb(1-\frac{2\Rbe}{\Rb{t}+\Rbe})(\rb \Ib{t}-\lae \Ia{t}\Rb{t})                                                                                                 \\
        =              & \xa(\Ia{t}-\Iae)(\lae \db{t})                                                                                                                                    \\
                       & +2\ya(\frac{\da{t}}{\Ra{t}+\rb/\lbe})(\ra \Ia{t}-\lbe \Ib{t}\Ra{t})                                                                                              \\
                       & +\xb(\Ib{t}-\Ibe)(\lbe \da{t})                                                                                                                                   \\
                       & +2\yb(\frac{\db{t}}{\Rb{t}+\ra/\lae})(\rb \Ib{t}-\lae \Ia{t}\Rb{t})                                                                                              \\
        =              & \xa \lae \db{t} \Ia{t} - \xa \lae \Iae \db{t}                                                                                                                    \\
                       & +2\ya \ra \frac{\da{t}\Ia{t}}{\Ra{t}+\rb/\lbe} - 2\ya \lbe \frac{\da{t} \Ib{t} \Ra{t}}{\Ra{t}+\rb/\lbe}                                                          \\
                       & +\xb \lbe \da{t} \Ib{t} - \xb \lbe \Ibe \da{t}                                                                                                                   \\
                       & +2\yb \rb \frac{\db{t}\Ib{t}}{\Rb{t}+\ra/\lae} - 2\yb \lae \frac{\db{t} \Ia{t} \Rb{t}}{\Rb{t}+\ra/\lae}                                                          \\
        =              & \da{t}\left(\xb \lbe(\Ib{t}-\Ibe)+2\ya\ra\frac{\Ia{t}}{\Ra{t}+\rb/\lbe}-2\ya\lbe\frac{\Ib{t}\Ra{t}}{\Ra{t}+\rb/\lbe}\right)                                      \\
                       & +\db{t}\left(\xa \lae(\Ia{t}-\Iae)+2\yb\rb\frac{\Ib{t}}{\Rb{t}+\ra/\lae}-2\yb\lae\frac{\Ia{t}\Rb{t}}{\Rb{t}+\ra/\lae}\right).\numberthis\label{eq:Lyapunovb_pre}
    \end{align*}

    We simplify the factor of $\da{t}$ as the other one follows by symmetry. To this end we plug in the values from \Cref{eq:parameters} and use $I^*=\numberOfVertices - \Rae -\Rbe$ and replace $\Ib{t}= \numberOfVertices - \Ra{t} - \Rb{t}- \Ia{t}= I^* - \Ia{t} - (\da{t}+\db{t})$. Note that $\Ibe = \frac{\ra}{\ra+\rb}I^*$. We get

    \begin{align*}
         & \da{t}\left(\xb \lbe(\Ib{t}-\Ibe)+2\ya\ra\frac{\Ia{t}}{\Ra{t}+\rb/\lbe}-2\ya\lbe\frac{\Ib{t}\Ra{t}}{\Ra{t}+\rb/\lbe}\right)                                                              \\
         & =\frac{\da{t}}{n}\left(\frac{\ra+\rb}{\ra}(\Ib{t}-\Ibe)+2\frac{\rb}{\lbe}\frac{\Ia{t}}{\Ra{t}+\rb/\lbe}-2\frac{\rb}{\ra}\frac{\Ib{t}\Ra{t}}{\Ra{t}+\rb/\lbe}\right)                      \\
         & =\frac{\da{t}}{n}\left(\frac{\ra+\rb}{\ra}(\Ib{t}-\Ibe)+2\frac{\rb}{\lbe}\frac{\Ia{t}}{\Ra{t}+\rb/\lbe}-\frac{\rb}{\ra}\Ib{t}-\frac{\rb}{\ra}\frac{\Ib{t}\da{t}}{\Ra{t}+\rb/\lbe}\right) \\
         & =\frac{\da{t}}{n}\left(\Ib{t}-I^*+2\frac{\rb}{\lbe}\frac{\Ia{t}}{\Ra{t}+\rb/\lbe}-\frac{\rb}{\ra}\frac{\Ib{t}\da{t}}{\Ra{t}+\rb/\lbe}\right)                                             \\
         & =\frac{\da{t}}{n}\left(I^*-\Ia{t}-(\da{t}+\db{t})-I^*+2\frac{\rb}{\lbe}\frac{\Ia{t}}{\Ra{t}+\rb/\lbe}-\frac{\rb}{\ra}\frac{\Ib{t}\da{t}}{\Ra{t}+\rb/\lbe}\right)                         \\
         & =\frac{\da{t}}{n}\left(-(\da{t}+\db{t})-\frac{\Ia{t}\da{t}}{\Ra{t}+\rb/\lbe}-\frac{\rb}{\ra}\frac{\Ib{t}\da{t}}{\Ra{t}+\rb/\lbe}\right)                                                  \\
    \end{align*}

    Plugging that into \Cref{eq:Lyapunovb_pre} gives us

    \begin{align*}
        \frac{d \Lyapunovb_t}{dt} & =-\frac{1}{n}\left(\frac{\da{t}^2}{\Ra{t}+\rb/\lbe}\left(\Ia{t}+\frac{\rb}{\ra}\Ib{t}\right)+\frac{\db{t}^2}{\Rb{t}+\ra/\lae}\left(\Ib{t}+\frac{\ra}{\rb}\Ia{t}\right)+(\da{t}+\db{t})^2\right).\qedhere
    \end{align*}
\end{proof}

Note that this derivative is still non-negative for all states with a positive number of vertices in each state. It is also still 0 whenever both $\Ra{t}$ and $\Rb{t}$ are at their equilibrium value.

\subsection{Changing the $R$ target}\label{sec:changeR}

We aim to show long survival times using a negative drift theorem. To this end we need a region in the potential space in which the process has a constant negative drift. Currently the derivative is 0 whenever both $\Ra{t}$ and $\Rb{t}$ are at their equilibrium value (which we call the diagonal). This makes it impossible to find such a region. The idea of how to solve that issue is to change the potential slightly such that the drift becomes negative at the diagonal and stays negative everywhere else. At the diagonal, the derivative of $\Ia{t}$ and $\Ib{t}$ by $t$ is 0. Hence we have to adjust the terms for $\Ra{t}$ and $\Rb{t}$. At the diagonal, when $\Ia{t}$ is above its equilibrium and $\Ib{t}$ below, $\Ra{t}$ tends to increase and $\Rb{t}$ tends to decrease. To use that fact, we adjust the target for $\Rb{t}$ slightly such that is a bit bigger if $\Ia{t}$ is below its equilibrium and higher if $\Ia{t}$ is above its equilibrium. We do the symmetric adjustment for $\Ra{t}$. We get the following potential function.

\begin{definition}\label{def:lyapunovPotential3}
    Let $G$ be a graph with $\numberOfVertices \in \N$ vertices and density $\density\in (0,1]$ and let \contactProcess be a \IRIR process on $G$ with $\Rae + \Rbe <n$.
    For all $t \in \R$, we define $\Lyapunovc_t$ as
    \begin{align*}
        \Lyapunovc_t = & \Lyapunovc(\Ia{t},\Ra{t},\Ib{t},\Rb{t})                                                                           \\
        =              & \xa\lyapunovHelper[\Iae,\Ia{t}]+ 2\ya\lyapunovHelper[(1+\frac{\ln(\Ibe)}{\ln(\Ib{t})})\Rae,\Ra{t}+\Rae]           \\
                       & +\xb\lyapunovHelper[\Ibe,\Ib{t}]+ 2\yb\lyapunovHelper[(1+\frac{\ln(\Iae)}{\ln(\Ia{t})})\Rbe,\Rb{t}+\Rbe].\qedhere
    \end{align*}
\end{definition}

The problem with this function is that some terms now depend on two variables. Hence, taking the derivative by a variable is not only the derivative of its one term anymore. To calculate the drift, we first calculate the derivative of the $R$ terms by the $I$ term that appears in them. We do this for $\Ra{t}$ and take the derivative by $\Ib{t}$, the other one follows by symmetry. We get

\begin{restatable}{lemma}{derivativeRI}\label{lem:derivativeRI}
    Let $G$ be a graph with $\numberOfVertices \in \N$ vertices and density $\density\in (0,1]$ and let \contactProcess be a \IRIR process on $G$ with $\Rae + \Rbe <n$. Let the parameters be chosen as in \Cref{eq:parameters}. Then
    \begin{align*}
        \frac{d\lyapunovHelper[(1+\frac{\ln(\Ibe)}{\ln(\Ib{t})})\Rae,\Ra{t}+\Rae]}{d \Ib{t}} & =\frac{\Rae \ln(\Ibe)}{\Ib{t} \ln^2(\Ib{t})}\ln(\frac{\Ra{t}+\Rae}{\Rae(1+\frac{\ln(\Ibe)}{\ln(\Ib{t})})}).\qedhere
    \end{align*}
\end{restatable}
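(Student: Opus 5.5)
This is a direct chain-rule computation: only the target argument of \lyapunovHelper depends on \Ib{t}, while the second argument $\Ra{t}+\Rae$ is independent of it. So I would first compute the partial derivative of $\lyapunovHelper[x^*,x]$ with respect to its first argument $x^*$, and then multiply by the derivative of the target with respect to \Ib{t}.

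First, write $\lyapunovHelper[x^*,x] = x - x^*\ln x + x^*\ln x^* - x^*$. Differentiating in $x^*$ with $x$ fixed gives
\begin{align*}
\frac{\partial \lyapunovHelper[x^*,x]}{\partial x^*} = -\ln x + \ln x^* + 1 - 1 = -\ln\frac{x}{x^*}.
\end{align*}
Second, set $x^*(\Ib{t}) = \bigl(1+\frac{\ln(\Ibe)}{\ln(\Ib{t})}\bigr)\Rae$. Its derivative is
\begin{align*}
\frac{d x^*}{d\Ib{t}} = \Rae\ln(\Ibe)\cdot\Bigl(-\frac{1}{\Ib{t}\ln^2(\Ib{t})}\Bigr).
\end{align*}

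Multiplying the two factors, the two minus signs cancel, and we get
\begin{align*}
\frac{\Rae\ln(\Ibe)}{\Ib{t}\ln^2(\Ib{t})}\ln\frac{\Ra{t}+\Rae}{\Rae\bigl(1+\frac{\ln(\Ibe)}{\ln(\Ib{t})}\bigr)},
\end{align*}
which is exactly the claim.

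The only point needing care is the domain of validity. We need $\Ib{t}>1$ so that $\ln(\Ib{t})\neq 0$. We need the target to be positive, which holds since $\Ibe>1$ gives $\ln(\Ibe)>0$. And we need $\Ra{t}+\Rae>0$, which is true because $\Rae>0$.

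The main potential pitfall is the sign bookkeeping in the chain rule. The parameter choice in \Cref{eq:parameters} plays no role here; it is presumably mentioned only for uniformity with the surrounding lemmas.
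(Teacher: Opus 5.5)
Your proposal is correct and follows essentially the same route as the paper: a direct chain-rule computation in which only the target argument $x^*$ depends on $I_{2,t}$, and the two minus signs cancel. The only cosmetic difference is that the paper applies the product rule to $x^*\bigl(\ln(x/x^*)+1\bigr)$ in one go, whereas you first isolate $\partial f/\partial x^* = -\ln(x/x^*)$. The cancellation that results is the same in both.
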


\begin{proof}
    The lemma follows using common derivative laws.

    \begin{align*}
          & \frac{d\lyapunovHelper[(1+\frac{\ln(\Ibe)}{\ln(\Ib{t})})\Rae,\Ra{t}+\Rae]}{d \Ib{t}}                                              \\
        = & \left(\Ra{t}+\Rae-\Rae(1+\frac{\ln(\Ibe)}{\ln(\Ib{t})})(\ln(\frac{\Ra{t}+\Rae}{\Rae(1+\frac{\ln(\Ibe)}{\ln(\Ib{t})})})+1)\right)' \\
        = & -\left(\Rae(1+\frac{\ln(\Ibe)}{\ln(\Ib{t})})\right)'(\ln(\frac{\Ra{t}+\Rae}{\Rae(1+\frac{\ln(\Ibe)}{\ln(\Ib{t})})})+1)            \\
          & -\Rae(1+\frac{\ln(\Ibe)}{\Ib{t}})\left(\ln(\frac{\Ra{t}+\Rae}{\Rae(1+\frac{\ln(\Ibe)}{\ln(\Ib{t})})})+1\right)'                   \\
        = & \frac{\Rae \ln(\Ibe)}{\Ib{t} \ln^2(\Ib{t})}(\ln(\frac{\Ra{t}+\Rae}{\Rae(1+\frac{\ln(\Ibe)}{\ln(\Ib{t})})})+1)                     \\
          & -\Rae(1+\frac{\ln(\Ibe)}{\ln(\Ib{t})})\frac{\ln(\Ibe)}{\Ib{t}\ln^2(\Ib{t})(1+\ln(\Ibe)/\ln(\Ib{t}))}                              \\
        = & \frac{\Rae \ln(\Ibe)}{\Ib{t} \ln^2(\Ib{t})}\ln(\frac{\Ra{t}+\Rae}{\Rae(1+\frac{\ln(\Ibe)}{\ln(\Ib{t})})}).\qedhere
    \end{align*}
\end{proof}

Using that, we calculate the derivative of $L$ in relation to the derivative of $K$ in \Cref{lem:derivativeL}.

\begin{lemma}\label{lem:derivativeL}
    Let $G$ be a perfectly mixed graph with $\numberOfVertices \in \N$ vertices and density $\density\in (0,1]$ and let \contactProcess be a \IRIR process on $G$ with $\Rae + \Rbe <n$. Let the parameters be chosen as in \Cref{eq:parameters}. Then

    \begin{align*}
        \frac{d \Lyapunovc_t}{dt}= & \frac{d \Lyapunovb_t}{dt}                                                                                                                       \\
                       & +2\yb\frac{\Rbe \ln(\Iae)}{ \ln^2(\Ia{t})}\ln(\frac{\Rb{t}+\Rbe}{\Rbe(1+\frac{\ln(\Iae)}{\ln(\Ia{t})})})(\lae\Rb{t}-\ra) \\
                       & +2\ya\frac{\Rae(1-\frac{\ln(\Ibe)}{\ln(\Ib{t})})}{\Ra{t}+\Rae}(\ra\Ia{t}-\lbe\Ib{t}\Ra{t})                               \\
                       & +2\ya\frac{\Rae \ln(\Ibe)}{ \ln^2(\Ib{t})}\ln(\frac{\Ra{t}+\Rae}{\Rae(1+\frac{\ln(\Ibe)}{\ln(\Ib{t})})})(\lbe\Ra{t}-\rb) \\
                       & +2\yb\frac{\Rbe(1-\frac{\ln(\Iae)}{\ln(\Ia{t})})}{\Rb{t}+\Rbe}(\rb\Ib{t}-\lae\Ia{t}\Rb{t}).\qedhere
    \end{align*}
\end{lemma}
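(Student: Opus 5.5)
We will compute $\frac{d\Lyapunovc_t}{dt}$ with the chain rule and compare it term by term with $\frac{d\Lyapunovb_t}{dt}$. The $\Ia{t}$ and $\Ib{t}$ summands of $\Lyapunovc$ are identical to those of $\Lyapunovb$, so they contribute exactly what they contribute to $\frac{d\Lyapunovb_t}{dt}$. The change is confined to the two shifted $R$ summands. Each of these depends on two variables: the $\Ra{t}$ term depends on $\Ra{t}$ and on $\Ib{t}$. We therefore write
\begin{align*}
\frac{d}{dt}\lyapunovHelper[(1+\tfrac{\ln(\Ibe)}{\ln(\Ib{t})})\Rae,\Ra{t}+\Rae] = \partial_{\Ra{t}}(\cdot)\,\frac{d\Ra{t}}{dt} + \partial_{\Ib{t}}(\cdot)\,\frac{d\Ib{t}}{dt}.
\end{align*}
We then handle the two partial derivatives separately. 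The $\Rb{t}$ term is treated symmetrically by swapping all indices.

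\textbf{The partial derivative in $\Ra{t}$.} Since $\partial_x \lyapunovHelper[x^*,x]=1-\frac{x^*}{x}$, we get
\begin{align*}
1-\frac{(1+\ln(\Ibe)/\ln(\Ib{t}))\Rae}{\Ra{t}+\Rae}.
\end{align*}
We compare this with the factor $1-\frac{2\Rae}{\Ra{t}+\Rae}$ that appears in the computation of $\frac{d\Lyapunovb_t}{dt}$ in the proof of \Cref{lem:derivativeK}. The difference is
\begin{align*}
\frac{\Rae(1-\ln(\Ibe)/\ln(\Ib{t}))}{\Ra{t}+\Rae}.
\end{align*}
Multiplying by $2\ya$ and by $\frac{d\Ra{t}}{dt}=\ra\Ia{t}-\lbe\Ib{t}\Ra{t}$ from \cref{der:R} gives exactly the third line of the claim. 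Everything else from this piece is absorbed into $\frac{d\Lyapunovb_t}{dt}$.

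\textbf{The partial derivative in $\Ib{t}$.} We invoke \Cref{lem:derivativeRI}, which yields
\begin{align*}
\frac{\Rae\ln(\Ibe)}{\Ib{t}\ln^2(\Ib{t})}\ln\Bigl(\frac{\Ra{t}+\Rae}{\Rae(1+\ln(\Ibe)/\ln(\Ib{t}))}\Bigr).
\end{align*}
We multiply this by $2\ya$ and by $\frac{d\Ib{t}}{dt}=\Ib{t}(\lbe\Ra{t}-\rb)$ from \cref{der:I}. The factor $\Ib{t}$ cancels, which produces the fourth line of the claim. The symmetric computation for the $\Rb{t}$ term gives the second and fifth lines, using $\frac{d\Ia{t}}{dt}=\Ia{t}(\lae\Rb{t}-\ra)$ and $\frac{d\Rb{t}}{dt}=\rb\Ib{t}-\lae\Ia{t}\Rb{t}$.

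The only delicate point is bookkeeping. We must make sure the ``$\Lyapunovb$ part'' matches precisely the expression obtained in \Cref{lem:derivativeK}, including the factor $2$ and the shift, so that no cross terms are double counted. To guarantee this, we add and subtract $\frac{2\Rae}{\Ra{t}+\Rae}$ explicitly. No perfectly-mixed assumption is needed beyond the use of \cref{der:I,der:R}, which is where that assumption enters.
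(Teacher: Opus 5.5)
Your proposal is correct and takes essentially the same route as the paper. Both apply the chain rule, observe that each shifted $R$ summand of $\Lyapunovc$ also depends on the coupled $I$ variable, and handle that dependence via \Cref{lem:derivativeRI}, where the $I$ factor cancels against $\frac{d\Ib{t}}{dt}$ or $\frac{d\Ia{t}}{dt}$. Both also extract the deviation of the $R$-partial from the factor $1-\frac{2\Rae}{\Ra{t}+\Rae}$ of $\Lyapunovb$; your explicit add-and-subtract is exactly the paper's step of writing out the full derivative and then isolating $\frac{d\Lyapunovb_t}{dt}$.
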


\begin{proof}
    The proof follows from calculating the derivative of $L$ as the sum of the derivatives by the $I$'s and $R$'s times their derivatives. For the derivative by the $I$'s also refer to \Cref{lem:derivativeRI}. We then use \Cref{lem:derivativeK} to isolate $\frac{d \Lyapunovb_t}{dt}$.

    \begin{align*}
        \frac{d \Lyapunovc_t}{dt}= & \left(\xa(1-\frac{\Iae}{\Ia{t}})+2\yb\frac{\Rbe \ln(\Iae)}{\Ia{t} \ln^2(\Ia{t})}\ln(\frac{\Rb{t}+\Rbe}{\Rbe(1+\frac{\ln(\Iae)}{\ln(\Ia{t})})})\right)(\lae\Ia{t}\Rb{t}-\ra\Ia{t})  \\
            & +2\ya(1-\frac{\Rae(1+\frac{\ln(\Ibe)}{\ln(\Ib{t})})}{\Ra{t}+\Rae})(\ra\Ia{t}-\lbe\Ib{t}\Ra{t})                                                                                     \\
            & +\left(\xb(1-\frac{\Ibe}{\Ib{t}})+2\ya\frac{\Rae \ln(\Ibe)}{\Ib{t} \ln^2(\Ib{t})}\ln(\frac{\Ra{t}+\Rae}{\Rae(1+\frac{\ln(\Ibe)}{\ln(\Ib{t})})})\right)(\lbe\Ib{t}\Ra{t}-\rb\Ib{t}) \\
            & +2\yb(1-\frac{\Rbe(1+\frac{\ln(\Iae)}{\ln(\Ia{t})})}{\Rb{t}+\Rbe})(\rb\Ib{t}-\lae\Ia{t}\Rb{t})                                                                                     \\
        =   & \frac{d \Lyapunovb_t}{dt}                                                                                                                                                                                 \\
            & +2\yb\frac{\Rbe \ln(\Iae)}{ \ln^2(\Ia{t})}\ln(\frac{\Rb{t}+\Rbe}{\Rbe(1+\frac{\ln(\Iae)}{\ln(\Ia{t})})})(\lae\Rb{t}-\ra)                                                           \\
            & +2\ya\frac{\Rae(1-\frac{\ln(\Ibe)}{\ln(\Ib{t})})}{\Ra{t}+\Rae}(\ra\Ia{t}-\lbe\Ib{t}\Ra{t})                                                                                         \\
            & +2\ya\frac{\Rae \ln(\Ibe)}{ \ln^2(\Ib{t})}\ln(\frac{\Ra{t}+\Rae}{\Rae(1+\frac{\ln(\Ibe)}{\ln(\Ib{t})})})(\lbe\Ra{t}-\rb)                                                           \\
            & +2\yb\frac{\Rbe(1-\frac{\ln(\Iae)}{\ln(\Ia{t})})}{\Rb{t}+\Rbe}(\rb\Ib{t}-\lae\Ia{t}\Rb{t}).\qedhere
    \end{align*}
\end{proof}

\section{Bounding the Survival Times}

In this section we derive survival time bounds for the IRIR process. To this end we aim to apply a drift theorem on the potential $\Lyapunovc$. For a detailed overview on which properties are needed for that, refer to \Cref{sec:drift}. We start by showing that we have a band in the potential space in which we get some bounds on the $I$ values. The following lemma shows that a small potential implies that both of the $I$ values are relatively big.

\begin{lemma}\label{lem:potentialBothBig}
    Let $C$ be a \IRIR process on a graph $G$ with $n$ vertices and density $\density \in (0,1]$. Let $C$ have constant recovery rates $\ra,\rb \in \R_{>0}$ and effective infection rates $\lae=\ca/n$ and $\lbe=\cb/n$ for constants $\ca,\cb \in \R_{>0}$. Let there be a constant $c\in \R_{>0}$ such that $\Rae + \Rbe < (1-c)n$.

    Let $\varepsilon_l \in \R_{>0}$ be a constant and let

    $$c_h \coloneqq \min\left(\frac{c}{\ca}\left(\ln(\frac{1}{\varepsilon_l})-\ln(\frac{\ra+\rb}{\rb c})-1\right), \frac{c}{\cb}\left(\ln(\frac{1}{\varepsilon_l})-\ln(\frac{\ra+\rb}{\ra c})-1\right)\right).$$

    If $\Lyapunovc_t<c_h n$ then $\Ia{t} > \varepsilon_l n$ and $\Ib{t} > \varepsilon_l n$.
\end{lemma}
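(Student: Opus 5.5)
We prove the contrapositive. Assume $\Ia{t} \leq \varepsilon_l n$; the case $\Ib{t} \leq \varepsilon_l n$ is symmetric, obtained by swapping indices $1$ and $2$. We show that $\Lyapunovc_t \geq c_h n$.

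The first step is to lower-bound $\Lyapunovc_t$ by its $\Ia{t}$-term alone. Each summand of $\Lyapunovc$ in \Cref{def:lyapunovPotential3} is a positive parameter from \Cref{eq:parameters} times $\lyapunovHelper[x^*,x]$. By \Cref{def:laypunocHelper}, the latter is nonnegative for $x, x^* > 0$, since $u - \ln u - 1 \geq 0$. We should check that the shifted targets $(1+\ln(\Ibe)/\ln(\Ib{t}))\Rae$ are positive, which holds whenever $\Ib{t} > 1$. Hence
\[
  \Lyapunovc_t \geq \xa \lyapunovHelper[\Iae,\Ia{t}]
  = \xa \Iae \Bigl(\frac{\Ia{t}}{\Iae} - \ln\frac{\Ia{t}}{\Iae} - 1\Bigr).
\]

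The second step evaluates the constants. We have $\xa \Iae = \frac{\ra+\rb}{\rb \lae n}(n-\Rae-\Rbe)\frac{\rb}{\ra+\rb} = \frac{n-\Rae-\Rbe}{\ca}$. Using $\Rae+\Rbe<(1-c)n$ gives $\xa\Iae \geq \frac{c}{\ca} n$. For the bracket, drop the nonnegative term $\Ia{t}/\Iae$ and use $-\ln(\Ia{t}/\Iae) = \ln\frac{1}{\Ia{t}/n} - \ln\frac{n}{\Iae}$. Then $\ln\frac{1}{\Ia{t}/n} \geq \ln\frac{1}{\varepsilon_l}$, and $\frac{n}{\Iae} = \frac{n(\ra+\rb)}{\rb(n-\Rae-\Rbe)} < \frac{\ra+\rb}{\rb c}$. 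So the bracket is at least $\ln(1/\varepsilon_l) - \ln(\frac{\ra+\rb}{\rb c}) - 1$.

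The third step combines these bounds, and one subtlety must be handled here. Multiplying the lower bound on $\xa\Iae$ by the lower bound on the bracket is only valid if the bracket bound is nonnegative. If it is negative, then $c_h \leq 0 \leq \Lyapunovc_t$ and the claim is trivial. We should also note that the factor $\xa\Iae$ actually equals $(n-\Rae-\Rbe)/\ca$, which is at least $cn/\ca$, so replacing it by the smaller value is legitimate once the bracket is nonnegative. This yields $\Lyapunovc_t \geq \frac{c}{\ca}\bigl(\ln\frac{1}{\varepsilon_l}-\ln\frac{\ra+\rb}{\rb c}-1\bigr) n \geq c_h n$.

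The symmetric case uses $\xb\Ibe = (n-\Rae-\Rbe)/\cb$ and $n/\Ibe < \frac{\ra+\rb}{\ra c}$, which produces the second term in the minimum defining $c_h$. The main obstacle is only the sign and domain bookkeeping. This covers the positivity of the $\lyapunovHelper$ arguments in the modified $R$-terms, and the edge case $\Ia{t}=0$, where $\lyapunovHelper$ is infinite and the bound holds trivially.
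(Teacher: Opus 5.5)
Your proof is correct and follows the paper's argument: you prove the contrapositive, keep only the nonnegative term $x_1 f(I_1^*, I_{1,t})$, use $x_1 I_1^* = (n-R_1^*-R_2^*)/c_1 \geq cn/c_1$, drop $I_{1,t}/I_1^*$, and bound $n/I_1^* < \frac{\varrho_1+\varrho_2}{\varrho_2 c}$. The only difference is the order of steps. The paper replaces $x_1 I_1^*$ by $cn/c_1$ before bounding the bracket, which is already safe because $u-\ln u-1\geq 0$, so your case split on the sign of the bracket bound is harmless but unnecessary.
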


\begin{proof}
    We show the contraposition of the statement. So we show that if $\Ia{t} \leq \varepsilon_l n$ or $\Ib{t} \leq \varepsilon_l n$ then $\Lyapunovc_t\geq c_h n$. Assume that $\Ia{t} \leq \varepsilon_l n$, the other case follows using symmetric argumentation.

    First note that $\lyapunovHelper$ is non-negative. Hence
    \begin{align*}
        \Lyapunovc & \geq \xa \lyapunovHelper(\Iae,\Ia{t})                                                      \\
                   & =\xa\Iae\left(\frac{\Ia{t}}{\Iae}-\ln(\frac{\Ia{t}}{\Iae})-1\right)                        \\
                   & =\frac{\ra+\rb}{\rb \lae n}\Iae\left(\frac{\Ia{t}}{\Iae}-\ln(\frac{\Ia{t}}{\Iae})-1\right) \\
                   & =\frac{I^*}{\ca}\left(\frac{\Ia{t}}{\Iae}-\ln(\frac{\Ia{t}}{\Iae})-1\right)                \\
                   & \geq\frac{c}{\ca}n\left(\frac{\Ia{t}}{\Iae}-\ln(\frac{\Ia{t}}{\Iae})-1\right)              \\
                   & \geq \frac{c}{\ca}n\left(0-\ln(\frac{\varepsilon_l n}{\Iae})-1\right)                      \\
                   & =\frac{c}{\ca}n\left(\ln(\frac{1}{\varepsilon_l})-\ln(\frac{n}{\Iae})-1\right)             \\
                   & \geq\frac{c}{\ca}n\left(\ln(\frac{1}{\varepsilon_l})-\ln(\frac{\ra+\rb}{\rb c})-1\right)   \\
                   & \geq c_h n.\qedhere
    \end{align*}
\end{proof}

The next lemma shows the opposite direction to the previous one. To be precise, it shows that a big potential implies that one of the $I$ values is very small.

\begin{lemma}\label{lem:potentialOneSmall}
    Let $C$ be a \IRIR process on a graph $G$ with $n$ vertices and density $\density \in (0,1]$. Let $C$ have constant recovery rates $\ra,\rb \in \R_{>0}$ and effective infection rates $\lae=\ca/n$ and $\lbe=\cb/n$ for constants $\ca,\cb \in \R_{>0}$. Let there be a constant $c\in \R_{>0}$ such that $\Rae + \Rbe < (1-c)n$.

    Let $\varepsilon_h \in \R_{>0}$ be a constant. Then there exists a constant $c_l \in \R_{>0}$ such that if $\Lyapunovc_t>c_l n$ then $\Ia{t} < \varepsilon_h n$ or $\Ib{t} < \varepsilon_h n$.
\end{lemma}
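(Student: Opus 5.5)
We prove the contrapositive: if $\Ia{t} \geq \varepsilon_h n$ and $\Ib{t} \geq \varepsilon_h n$, then $\Lyapunovc_t \leq c_l n$ for a suitable constant $c_l$. The plan is to bound each of the four summands of $\Lyapunovc_t$ in \Cref{def:lyapunovPotential3} separately by a constant times $n$ and take $c_l$ to be the sum of these constants (plus one, for strictness). Throughout, we use that under the assumptions $\lae=\ca/n$, $\lbe=\cb/n$ and constant $\ra,\rb$, the parameters in \Cref{eq:parameters} are constants. For example, $\xa=\frac{\ra+\rb}{\rb\ca}$. Moreover, $\Rae=\frac{\rb}{\cb}n$ and $\Rbe=\frac{\ra}{\ca}n$ are linear in $n$. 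By $\Rae+\Rbe<(1-c)n$, also $\Iae,\Ibe \geq \min(\frac{\rb}{\ra+\rb},\frac{\ra}{\ra+\rb})\, c n$ are linear in $n$, and all equilibrium values are at most $n$.

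For the $I$ terms, write $\xa\lyapunovHelper[\Iae,\Ia{t}]=\xa\Iae\,g(\Ia{t}/\Iae)$ with $g(z)=z-\ln z-1$. Since $\varepsilon_h n\leq \Ia{t}\leq n$, the ratio $\Ia{t}/\Iae$ lies in a fixed interval $[\varepsilon_h,\, n/\Iae]\subseteq[\varepsilon_h, c']$ with constant $c'$. On this interval $g$ is bounded by a constant, because $g$ is continuous there and only blows up at $0$ and $\infty$. As $\xa\Iae\leq \xa n$, this term is $\bigO{n}$ with an explicit constant such as $\xa(c'+\ln(1/\varepsilon_h))n$. The same holds symmetrically for $\Ib{t}$.

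For the $R$ terms, the target is $x^*=(1+\ln(\Ibe)/\ln(\Ib{t}))\Rae$. Here is the one spot requiring care, namely showing that the factor $\ln(\Ibe)/\ln(\Ib{t})$ is bounded above and below by positive constants. Since $\Ibe$ and $\Ib{t}$ are both in $[\min(\varepsilon_h, c'')n, n]$, we get $\ln\Ib{t}=\ln n-\bigO{1}$ and likewise for $\ln\Ibe$. Hence the ratio is $1+\smallO{1}$, in particular it lies in $[1/2,2]$ for sufficiently large $n$. Therefore $x^*\in[\tfrac32\Rae,3\Rae]$. The argument $\Ra{t}+\Rae$ lies in $[\Rae, n+\Rae]$, so the ratio $(\Ra{t}+\Rae)/x^*$ lies in $[\tfrac13, (n+\Rae)/(\tfrac32\Rae)]$, which is a fixed compact interval away from $0$ since $\Rae$ is linear in $n$. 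Thus $g$ of this ratio is bounded by a constant, and $2\ya x^*\leq 6\ya\Rae=\bigO{n}$. The $\Rb{t}$ term is symmetric.

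Summing the four bounds gives $\Lyapunovc_t\leq c_l n$ for an explicit constant $c_l$ depending only on $\varepsilon_h,c,\ca,\cb,\ra,\rb$. The contrapositive yields the statement. The main obstacle is the log-ratio factor in the modified targets, handled as above for sufficiently large $n$; all other steps are routine bounding of $g$ on compact intervals.
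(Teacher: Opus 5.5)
Your proposal is correct and follows essentially the same route as the paper. Both argue by contrapositive and bound each of the four summands of $\Lyapunovc_t$ by a constant multiple of $n$, and the only delicate point in each is bounding the modified target factor $1+\ln(\Ibe)/\ln(\Ib{t})$ for large $n$. The paper obtains $(1,3)$ from $\Ibe\le n<\varepsilon_h^2n^2\le \Ib{t}^2$, whereas you obtain $[3/2,3]$ from the log-ratio being $1+o(1)$; either bracket suffices.
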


\begin{proof}
    We show the contraposition of the statement. So we show that if $\Ia{t} \geq \varepsilon_h n$ and $\Ib{t} \geq \varepsilon_h n$ then $\Lyapunovc_t\leq c_l n$. There are four terms in $\Lyapunovc_t$. We show for each of them that they are smaller than $\frac{c_l}{4} n$ for $c_l$ chosen high enough.

    We first upper bound the $\Ia{t}$ term and note that the $\Ib{t}$ term can be bound similarly. We know

    \begin{align*}
             & \xa \lyapunovHelper(\Iae,\Ia{t})                                                          \\
        =    & \xa\Iae\left(\frac{\Ia{t}}{\Iae}-\ln(\frac{\Ia{t}}{\Iae})-1\right)                        \\
        =    & \frac{\ra+\rb}{\rb \lae n}\Iae\left(\frac{\Ia{t}}{\Iae}-\ln(\frac{\Ia{t}}{\Iae})-1\right) \\
        =    & \frac{I^*}{\ca}\left(\frac{\Ia{t}}{\Iae}-\ln(\frac{\Ia{t}}{\Iae})-1\right)                \\
        \leq & \frac{n}{\ca}\left(\frac{n}{c \rb n/(\ra+\rb)}-\ln(\frac{\varepsilon_h n}{n})-1\right)    \\
        \leq & \frac{n}{\ca}\left(\frac{\ra+\rb}{c \rb}+\ln(\frac{1}{\varepsilon_h})-1\right)            \\
        \leq & \frac{c_l}{4}n.
    \end{align*}

    In order to bound the other terms, we first bound $l_2 = 1 + \frac{\ln(\Ibe)}{\ln(\Ib{t})}$. As both $\Ibe$ and $\Ib{t}$ are bigger than 1, the logarithms are positive and $l_2>1$. For sufficiently large $n$ we have $\Ibe \leq n < \varepsilon_h^2 n^2 \leq \Ib{t}^2$. Hence $l_2<3$. With this we bound the $\Ra{t}$ term of $\Lyapunovc_t$, noting that the $\Rb{t}$ term can be bound in the same way.

    \begin{align*}
             & 2\ya \lyapunovHelper(l_2 \Rae,\Ra{t}+\Rae)                                                  \\
        =    & 2 \ya l_2 \Rae\left(\frac{\Ra{t}+\Rae}{l_2 \Rae}-\ln(\frac{\Ra{t}+\Rae}{l_2 \Rae})-1\right) \\
        \leq & 2\frac{\rb}{\ra \cb}3n\left(\frac{2n}{\frac{\rb}{\cb}n}-\ln(\frac{\Rae}{3\Rae})-1\right)    \\
        =    & 6\frac{\rb}{\ra \cb}n\left(2\frac{\cb}{\rb}+\ln(3)-1\right)                                 \\
        \leq & \frac{c_l}{4}n.\qedhere
    \end{align*}
\end{proof}

In order to get from the mean-field analysis to an analysis of the original stochastic process, we need to convert the derivatives to steps. The following lemma shows that steps that increase $\Ib{t}$ change the potential term of $\Ra{t}$ by as much as the derivative would suggest except for lower order terms.

\begin{restatable}{lemma}{derivativeDiscrete}\label{lem:derivativeDiscrete}
    Let $G$ be a graph with $\numberOfVertices$ vertices and density $\density \in (0,1]$ and let \contactProcess be a \IRIR process on $G$ with $\Rae + \Rbe <(1-c)n$ for a constant $c\in \R_{>0}$ and with constant recovery rates and effective infection rates in $\bigTheta{n^{-1}}$. Let the parameters be chosen as in \Cref{eq:parameters} and let $\Ib{t} \in \bigTheta{n}$. Then

    \begin{align*}
          & \lyapunovHelper[(1+\frac{\ln(\Ibe)}{\ln(\Ib{t}+1)})\Rae,\Ra{t}+\Rae]-\lyapunovHelper[(1+\frac{\ln(\Ibe)}{\ln(\Ib{t})})\Rae,\Ra{t}+\Rae] \\
        = & \frac{d\lyapunovHelper[(1+\frac{\ln(\Ibe)}{\ln(\Ib{t})})\Rae,\Ra{t}+\Rae]}{d \Ib{t}}\pm \bigO{n^{-1}}.\qedhere
    \end{align*}
\end{restatable}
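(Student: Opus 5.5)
The plan is to treat the left-hand side as a function of a single real variable and apply Taylor's theorem with Lagrange remainder. Fix $\Ra{t}$ and set $g(x) \coloneqq \lyapunovHelper[(1+\frac{\ln(\Ibe)}{\ln(x)})\Rae,\Ra{t}+\Rae]$ for real $x$ in a neighbourhood of $\Ib{t}$. Then the difference equals $g(\Ib{t}+1)-g(\Ib{t}) = g'(\Ib{t}) + \frac{1}{2}g''(\xi)$ for some $\xi \in (\Ib{t},\Ib{t}+1)$. By \Cref{lem:derivativeRI}, $g'(\Ib{t})$ is exactly the derivative in the statement. It therefore suffices to show $\sup_{\xi\in[\Ib{t},\Ib{t}+1]}|g''(\xi)| \in \bigO{n^{-1}}$.

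To bound $g''$, differentiate the formula of \Cref{lem:derivativeRI} once more. Write $h(x) \coloneqq \frac{\ln(\Ibe)}{x\ln^2(x)}$ and $u(x) \coloneqq \ln\bigl(\frac{\Ra{t}+\Rae}{\Rae(1+\ln(\Ibe)/\ln(x))}\bigr)$, so that $g' = \Rae h u$ and $g'' = \Rae(h'u + hu')$. We collect the orders of magnitude, using $\Ibe \in \bigTheta{n}$ (because $\Rae+\Rbe<(1-c)n$ and the rates are constant), $\xi \in \bigTheta{n}$, and $\Rae = \rb/\lbe \in \bigTheta{n}$.

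\begin{itemize}
\item The factor $\ln(\Ibe)/\ln(\xi) = 1 \pm \bigO{1/\ln n}$, hence the bracket $1+\ln(\Ibe)/\ln(\xi)$ lies in $[1,3]$, as in the proof of \Cref{lem:potentialOneSmall}.
\item The argument of $u$ lies between $\frac{1}{3}$ and $\frac{n+\Rae}{\Rae} \in \bigO{1}$, so $|u| \in \bigO{1}$.
\item For $h$, we have $|h| \in \bigO{\frac{1}{n\ln n}}$ and $|h'| \in \bigO{\frac{\ln n}{n^2\ln^2 n}} = \bigO{\frac{1}{n^2\ln n}}$.
\item The derivative $u'$ equals $\frac{\ln(\Ibe)}{x\ln^2(x)(1+\ln(\Ibe)/\ln(x))}$, which is $\bigO{\frac{1}{n\ln n}}$.
\end{itemize}

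Combining these, $|g''(\xi)| \le \Rae\bigl(|h'||u|+|h||u'|\bigr) \in \bigTheta{n}\cdot\bigO{\frac{1}{n^2\ln n}} = \bigO{\frac{1}{n\ln n}} \subseteq \bigO{n^{-1}}$. This gives the claim.

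The main obstacle is making the bounds uniform over the whole interval $[\Ib{t},\Ib{t}+1]$, and in particular keeping the argument of the logarithm in $u$ bounded away from $0$ and $\infty$ uniformly in $\Ra{t}\in[0,n]$. The shift by $\Rae$ makes this work. The numerator $\Ra{t}+\Rae$ is at least $\Rae$, and the denominator is at most $3\Rae$, so the ratio is at least $\frac{1}{3}$. I would state this explicitly, together with the requirement that $n$ is large enough that $\xi\ge 2$ and $\ln \xi \ge \frac12\ln n$. Everything else is routine bookkeeping of constants depending only on $\ra,\rb,\ca,\cb,c$ and the constant hidden in $\Ib{t}\in\bigTheta{n}$.
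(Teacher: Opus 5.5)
Your proof is correct, but it controls the error term differently from the paper. The paper never takes a second derivative. It writes the difference as $xyz - x(y+y')(z+z')$ with $x=\Rae$, $y = 1+\ln(\Ibe)/\ln(\Ib{t})$, and $z$ the logarithmic factor plus one. It computes the increments $y',z'\in\bigO{n^{-1}}$, up to $\bigO{n^{-2}}$ errors, from $\ln(1+1/\Ib{t}) = 1/\Ib{t} - \bigO{n^{-2}}$. It then multiplies out and checks by hand that the surviving first-order terms $-xyz'-xy'z$ recombine into the expression of \Cref{lem:derivativeRI}; every cross term is $\bigO{n^{-1}}$ because $x\in\bigTheta{n}$.

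You instead take the first-order term directly from \Cref{lem:derivativeRI} via the Lagrange form of Taylor's theorem, so the only work left is a uniform bound on $g''$ over $[\Ib{t},\Ib{t}+1]$. Your orders $|h'|\in\bigO{1/(n^2\ln n)}$, $|h|,|u'|\in\bigO{1/(n\ln n)}$ and $|u|\in\bigO{1}$ are right.

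Your route gains several things:
\begin{itemize}
\item It needs no product bookkeeping.
\item It explicitly justifies that the logarithmic factor is bounded, via the ratio lying in $[1/3,(n+\Rae)/\Rae]$; the paper simply asserts $z\in\bigO{1}$.
\item It gives a slightly sharper error of $\bigO{1/(n\ln n)}$.
\item It handles the decrement $\Ib{t}\to\Ib{t}-1$ by the identical argument with $\xi\in(\Ib{t}-1,\Ib{t})$. \Cref{lem:driftDerivative} uses this case for recovery steps of infection~$2$, but the lemma states only the $+1$ case.
\end{itemize}
The paper's route, in exchange, needs only the elementary bound $x-x^2\le\ln(1+x)\le x$ and no second derivative.
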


\begin{proof}
    First note that with the given parameter restrictions, all of $\Ib{t},\Rae,\Ibe$ are in $\bigTheta{n}$. Using known bounds for the natural logarithm, we get that for any $x\in \R_{>-0.68}$ it holds $x-x^2 \leq \ln(1+x) \leq x$. In particular if $x \in \bigO{n^{-1}}$, we get that $\ln(1+x)= x - \bigO{n^{-2}}$. In particular

    \begin{align}
        \ln(\Ib{t}+1)-\ln(\Ib{t}) & = \ln(\frac{\Ib{t}+1}{\Ib{t}})=\ln(1+\frac{1}{\Ib{t}})= \frac{1}{\Ib{t}} - \bigO{n^{-2}}.\label{eq:lnDiff}
    \end{align}

    This also implies that

    \begin{align*}
        \frac{1}{\ln(\Ib{t}+1)\ln(\Ib{t})} & = \frac{1}{\ln^2(\Ib{t})}\cdot \frac{\ln(\Ib{t})}{\ln(\Ib{t}+1)}                  \\
                                           & =\frac{1}{\ln^2(\Ib{t})}\cdot (1-\frac{\ln(\Ib{t}+1)-\ln(\Ib{t})}{\ln(\Ib{t}+1)}) \\
                                           & =\frac{1}{\ln^2(\Ib{t})}\cdot(1-\bigO{n^{-1}}).\numberthis\label{eq:lnProd}
    \end{align*}

    Now consider the equation we aim to show. We calculated the derivative in \Cref{lem:derivativeRI}. We start rearranging the other term by using the definition of $\lyapunovHelper$ and canceling out terms that are the same on both sides of the subtraction.

    \begin{align*}
          & \lyapunovHelper[(1+\frac{\ln(\Ibe)}{\ln(\Ib{t}+1)})\Rae,\Ra{t}+\Rae]-\lyapunovHelper[(1+\frac{\ln(\Ibe)}{\ln(\Ib{t})})\Rae,\Ra{t}+\Rae]                  \\
        = & \Rae(1+\frac{\ln(\Ibe)}{\ln(\Ib{t})})\left(\ln(\frac{\Ra{t}+\Rae}{\Rae(1+\frac{\ln(\Ibe)}{\ln(\Ib{t})})})+1\right)                                       \\
          & -\Rae(1+\frac{\ln(\Ibe)}{\ln(\Ib{t}+1)})\left(\ln(\frac{\Ra{t}+\Rae}{\Rae(1+\frac{\ln(\Ibe)}{\ln(\Ib{t}+1)})})+1\right).\numberthis\label{eq:bigFormula}
    \end{align*}

    The strategy is now to write the factors in the second term as the factors of the first term plus the appropriate difference. Multiplying things out than cancels the entire first term and just leaves us with the difference. We first calculate the difference of the second terms using \Cref{eq:lnDiff} and \Cref{eq:lnProd}.

    \begin{align*}
          & (1+\frac{\ln(\Ibe)}{\ln(\Ib{t}+1)})-(1+\frac{\ln(\Ibe)}{\ln(\Ib{t})})                   \\
        = & \frac{\ln(\Ibe)}{\ln(\Ib{t}+1)}-\frac{\ln(\Ibe)}{\ln(\Ib{t})}                           \\
        = & \frac{\ln(\Ibe)(\ln(\Ib{t})-\ln(\Ib{t}+1))}{\ln(\Ib{t})\ln(\Ib{t}+1)}                   \\
        = & \frac{\ln(\Ibe)}{\ln^2(\Ib{t})}(1\pm \bigO{n^{-1}})(-\frac{1}{\Ib{t}}\pm \bigO{n^{-2}}) \\
        = & -\frac{\ln(\Ibe)}{\Ib{t}\ln^2(\Ib{t})}\pm \bigO{n^{-2}}.\numberthis\label{eq:termTwo}
    \end{align*}

    Doing similar calculation for the third term, we get

    \begin{align*}
          & \left(\ln(\frac{\Ra{t}+\Rae}{\Rae(1+\frac{\ln(\Ibe)}{\ln(\Ib{t}+1)})})+1\right)-\left(\ln(\frac{\Ra{t}+\Rae}{\Rae(1+\frac{\ln(\Ibe)}{\ln(\Ib{t})})})+1\right) \\
        = & -\ln\left(\frac{1+\frac{\ln(\Ibe)}{\ln(\Ib{t}+1)}}{1+\frac{\ln(\Ibe)}{\ln(\Ib{t})}}\right)                                                                    \\
        = & -\ln\left(1+\frac{\frac{\ln(\Ibe)}{\ln(\Ib{t}+1)}-\frac{\ln(\Ibe)}{\ln(\Ib{t})}}{1+\frac{\ln(\Ibe)}{\ln(\Ib{t})}}\right)                                      \\
        = & -\ln\left(1+\frac{\frac{\ln(\Ibe)(\ln(\Ib{t})-\ln(\Ib{t}+1))}{\ln(\Ib{t}+1)\ln(\Ib{t})}}{1+\frac{\ln(\Ibe)}{\ln(\Ib{t})}}\right)                              \\
        = & -\ln\left(1+\frac{\ln(\Ibe)(-1/\Ib{t} \pm \bigO{n^{-2}})}{\ln^2(\Ib{t})(1+\frac{\ln(\Ibe)}{\ln(\Ib{t})})}(1-\bigO{n^{-1}})\right)                             \\
        = & -\ln\left(1-\frac{\ln(\Ibe)}{\Ib{t}\ln^2(\Ib{t})(1+\frac{\ln(\Ibe)}{\ln(\Ib{t})})}\pm \bigO{n^{-2}}\right)                                                    \\
        = & \frac{\ln(\Ibe)}{\Ib{t}\ln^2(\Ib{t})(1+\frac{\ln(\Ibe)}{\ln(\Ib{t})})}\pm \bigO{n^{-2}}.\numberthis\label{eq:termThree}
    \end{align*}

    We aim to plug \Cref{eq:termTwo} and \Cref{eq:termThree} into \Cref{eq:bigFormula} to replace two of the factors on the right. That makes it into a formula of the form

    \begin{align*}
        xyz & - x(y+y'\pm \bigO{n^{-2}})(z+z'\pm \bigO{n^{-2}}).
    \end{align*}

    Note that in this case $x\in \bigTheta{n}$, $y,z \in \bigO{1}$ and $y',z' \in \bigO{n^{-1}}$. Hence multiplying the terms out and grouping all resulting terms in $\bigO{n^{-1}}$ together gives

    \begin{align*}
        -xyz' -xy'z \pm \bigO{n^{-1}}.
    \end{align*}

    Actually applying that to \Cref{eq:bigFormula} gives

    \begin{align*}
          & \Rae(1+\frac{\ln(\Ibe)}{\ln(\Ib{t})})\left(\ln(\frac{\Ra{t}+\Rae}{\Rae(1+\frac{\ln(\Ibe)}{\ln(\Ib{t})})})+1\right)          \\
          & -\Rae(1+\frac{\ln(\Ibe)}{\ln(\Ib{t}+1)})\left(\ln(\frac{\Ra{t}+\Rae}{\Rae(1+\frac{\ln(\Ibe)}{\ln(\Ib{t}+1)})})+1\right)     \\
        = & -\Rae(1+\frac{\ln(\Ibe)}{\ln(\Ib{t})})\frac{\ln(\Ibe)}{\Ib{t}\ln^2(\Ib{t})(1+\frac{\ln(\Ibe)}{\ln(\Ib{t})})}                \\
          & +\Rae \frac{\ln(\Ibe)}{\Ib{t}\ln^2(\Ib{t})}\left(\ln(\frac{\Ra{t}+\Rae}{\Rae(1+\frac{\ln(\Ibe)}{\ln(\Ib{t})})})+1\right)    \\
          & \pm \bigO{n^{-1}}                                                                                                           \\
        = & \Rae \frac{\ln(\Ibe)}{\Ib{t}\ln^2(\Ib{t})}\ln(\frac{\Ra{t}+\Rae}{\Rae(1+\frac{\ln(\Ibe)}{\ln(\Ib{t})})}) \pm \bigO{n^{-1}}.
    \end{align*}

    Noting that the last term is exactly the derivative from \Cref{lem:derivativeRI} except for the $\pm \bigO{n^{-1}}$ concludes the proof.
\end{proof}

As a similar statement to \Cref{lem:derivativeDiscrete} also holds for the change in the $\Ib{t}$ term of the potential, we get that the potential change of a step of size 1 equals the derivative by the changed value plus lower order terms.

\begin{lemma}\label{lem:differenceDerivative}
    Let $C$ be a \IRIR process on a graph $G$ with $n$ vertices and density $\density \in (0,1]$. Let $C$ have constant recovery rates $\ra,\rb \in \R_{>0}$ and effective infection rates $\lae=\ca/n$ and $\lbe=\cb/n$ for constants $\ca,\cb \in \R_{>0}$. Let $\varepsilon_l$ be a constant and let $i \in \N$ with $t\coloneqq \timeContinuous{i}$ such that $\Ia{t}\geq \varepsilon_l n$ and $\Ib{t}\geq \varepsilon_l n$. Then

    \begin{align*}
          & \Lyapunovc(\Ia{t},\Ra{t},\Ib{t}+1,\Rb{t})-\Lyapunovc(\Ia{t},\Ra{t},\Ib{t},\Rb{t}) \\
        = & \frac{d \Lyapunovc(\Ia{t},\Ra{t},\Ib{t},\Rb{t})}{d \Ib{t}} \pm \bigO{n^{-1}}.
    \end{align*}

    Equivalent statements also hold for variable changes and derivatives by $\Ia{t}$, $\Ra{t}$ and $\Rb{t}$ respectively.
\end{lemma}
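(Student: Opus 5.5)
The plan is to split the difference $\Lyapunovc(\Ia{t},\Ra{t},\Ib{t}+1,\Rb{t})-\Lyapunovc(\Ia{t},\Ra{t},\Ib{t},\Rb{t})$ into the contributions of the four terms of \Cref{def:lyapunovPotential3}. Only two of them depend on $\Ib{t}$. One is the term $\xb\lyapunovHelper[\Ibe,\Ib{t}]$. The other is the $\Ra{t}$ term $2\ya\lyapunovHelper[(1+\frac{\ln(\Ibe)}{\ln(\Ib{t})})\Rae,\Ra{t}+\Rae]$, in which $\Ib{t}$ enters through the target. The $\Ia{t}$ term and the $\Rb{t}$ term do not change, and their derivatives by $\Ib{t}$ vanish. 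So it suffices to show that each of the two remaining terms changes by its partial derivative by $\Ib{t}$, up to $\pm\bigO{n^{-1}}$. Summing then gives the claim.

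For the $\Ra{t}$ term I would invoke \Cref{lem:derivativeDiscrete} directly. It applies because $\Ib{t}\geq\varepsilon_l n$ and $\Ib{t}\leq n$ give $\Ib{t}\in\bigTheta{n}$. It yields the difference equal to the derivative from \Cref{lem:derivativeRI} plus $\pm\bigO{n^{-1}}$. The prefactor is $2\ya=\frac{2\rb}{\ra\cb}\in\bigO{1}$ by \Cref{eq:parameters}, so the error stays $\bigO{n^{-1}}$.

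For the $\Ib{t}$ term I would use Taylor's theorem in $x=\Ib{t}$. We have $\frac{\d\lyapunovHelper[\Ibe,x]}{\d x}=1-\frac{\Ibe}{x}$ and $\frac{\d^2\lyapunovHelper[\Ibe,x]}{\d x^2}=\frac{\Ibe}{x^2}$. On $[\Ib{t},\Ib{t}+1]$ the second derivative is at most $\frac{\Ibe}{\varepsilon_l^2n^2}\in\bigO{n^{-1}}$, since $\Ibe\leq n$. Hence the increment equals the derivative plus $\bigO{n^{-1}}$. Alternatively, \Cref{lem:lyapunovHelper} gives the upper bound with error $\frac{\Ibe}{x(x+1)}$, and concavity of $\ln$ gives the matching lower bound. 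Multiplying by $\xb=\frac{\ra+\rb}{\ra\cb}\in\bigO{1}$ preserves the order.

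For the analogous statements, a step in $\Ia{t}$ is symmetric to the one above. A step in $\Ra{t}$ touches only $\lyapunovHelper[l_2\Rae,\Ra{t}+\Rae]$ with fixed target $l_2\Rae$, where $1<l_2<3$ as in \Cref{lem:potentialOneSmall}. Its second derivative in $\Ra{t}$ is $\frac{l_2\Rae}{(\Ra{t}+\Rae)^2}\leq\frac{3}{\Rae}\in\bigO{n^{-1}}$, because $\Rae=\rb/\lbe=\frac{\rb}{\cb}n\in\bigTheta{n}$ keeps the argument bounded away from $0$. The $\Rb{t}$ case is symmetric.

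The only point needing care is the uniformity of the error constants. I would make explicit that $\Ibe$, $\Rae$ and $\Rbe$ are $\bigTheta{n}$ and that the coefficients in \Cref{eq:parameters} are constants. The constant hidden in $\bigO{n^{-1}}$ then depends only on $\varepsilon_l,\ra,\rb,\ca,\cb$ and $c$. This holds under the standing assumption $\Rae+\Rbe<(1-c)n$, which is needed so that \Cref{lem:derivativeDiscrete} applies and $\Ibe\in\bigTheta{n}$.
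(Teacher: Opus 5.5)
Your proposal is correct and follows essentially the paper's route. You decompose $\Lyapunovc$ into its four $f$-terms and treat the dependence of the $\Ra{t}$-term's target on $\Ib{t}$ via \Cref{lem:derivativeDiscrete}; the own term and the $R$-steps (where the shift by $\Rae$ keeps the argument linear in $n$) get a second-order bound. Your Taylor/convexity argument is slightly more careful than the paper's bare appeal to \Cref{lem:lyapunovHelper}, which by itself only gives the upper direction of the $\pm\bigO{n^{-1}}$ error.
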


\begin{proof}
    The lemma statement follows directly from \Cref{lem:lyapunovHelper} and \Cref{lem:derivativeDiscrete}. The difference of the $L$ values can be written as a sum of the difference of the corresponding $f$ values. The $\Ia{t}$ and $\Rb{t}$ terms are unaffected by changing $\Ib{t}$ and hence the difference is 0. The $\Ib{t}$ term difference is by \Cref{lem:lyapunovHelper} only off from the derivative of that term by $\bigO{n^{-1}}$ as $\Ib{t} \in \bigTheta{n}$. The same is true for the $\Ra{t}$ term by \Cref{lem:derivativeDiscrete}. As the derivative of $L$ is the sum of derivatives of the $f$ terms, the lemma statement follows.

    The argumentation for $\Ia{t}$ works the exact same way. For $\Ra{t}$ and $\Rb{t}$ only one of the $f$ derivatives is non-zero, hence by \Cref{lem:lyapunovHelper} alone the statement follows.
\end{proof}

We now show that the absolute value of the derivative of $\Lyapunovc$ by any of the $R's$ and $I's$ is upper bounded by a constant in the region we care for.

\begin{lemma}\label{lem:constantStep}
    Let $C$ be a \IRIR process on a graph $G$ with $n$ vertices and density $\density \in (0,1]$. Let $C$ have constant recovery rates $\ra,\rb \in \R_{>0}$ and effective infection rates $\lae=\ca/n$ and $\lbe=\cb/n$ for constants $\ca,\cb \in \R_{>0}$. Let there be a constant $c\in \R_{>0}$ such that $\Rae + \Rbe < (1-c)n$. Let $\varepsilon_l$ be a constant and let $i \in \N$ with $t\coloneqq \timeContinuous{i}$ such that $\Ia{t}\geq \varepsilon_l n$ and $\Ib{t}\geq \varepsilon_l n$. Then there exists a constant $c_s\in \R_{>0}$ such that

    $$\max\left(\left\lvert\frac{d\Lyapunovc_t}{d \Ia{t}}\right\rvert,\left\lvert\frac{d\Lyapunovc_t}{d \Ra{t}}\right\rvert,\left\lvert\frac{d\Lyapunovc_t}{d \Ib{t}}\right\rvert,\left\lvert\frac{d\Lyapunovc_t}{d \Rb{t}}\right\rvert\right) < c_s.$$
\end{lemma}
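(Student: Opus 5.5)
We compute the four partial derivatives of $\Lyapunovc$ explicitly and bound each by a constant. The computation only needs two facts. First, $\frac{\partial \lyapunovHelper[x^*,x]}{\partial x} = 1-\frac{x^*}{x}$. Second, \Cref{lem:derivativeRI} gives the derivative of the shifted $R$ terms with respect to the $I$ variable hidden in their target. Throughout we use the regime facts already exploited in \Cref{lem:potentialOneSmall}:
\begin{itemize}
\item By \Cref{eq:equi} and $\Rae+\Rbe<(1-c)n$, all of $\Iae,\Ibe,\Rae,\Rbe$ lie in $\bigTheta{n}$. For instance, $\Rae=\rb n/\cb$ and $\Iae\geq c\,n\,\rb/(\ra+\rb)$.
\item By \Cref{eq:parameters}, all four parameters $\xa,\xb,\ya,\yb$ are constants, since $\lae n=\ca$ and $\lbe n=\cb$.
\item Since $\Ia{t},\Ib{t}\in[\varepsilon_l n,n]$, the factors $l_1\coloneqq 1+\frac{\ln(\Iae)}{\ln(\Ia{t})}$ and $l_2\coloneqq 1+\frac{\ln(\Ibe)}{\ln(\Ib{t})}$ lie in $(1,3)$ for sufficiently large $n$.
\end{itemize}

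\textbf{The $R$ derivatives.} The variable $\Ra{t}$ appears only in the term $2\ya\lyapunovHelper[l_2\Rae,\Ra{t}+\Rae]$. Hence
\[
\frac{d\Lyapunovc_t}{d\Ra{t}} = 2\ya\Bigl(1-\frac{l_2\Rae}{\Ra{t}+\Rae}\Bigr).
\]
Because $\Ra{t}\geq 0$, the fraction lies in $(0,3]$, so the absolute value is at most $4\ya$. The case of $\Rb{t}$ is symmetric. This is exactly where the shift from \Cref{sec:shiftR} pays off: without it, small $\Ra{t}$ would make the derivative unbounded.

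\textbf{The $I$ derivatives.} The variable $\Ib{t}$ appears in two terms. Differentiating $\xb\lyapunovHelper[\Ibe,\Ib{t}]$ gives $\xb(1-\Ibe/\Ib{t})$. Since $\Ibe\leq n$ and $\Ib{t}\geq\varepsilon_l n$, this has absolute value at most $\xb(1+1/\varepsilon_l)$. The second term is $2\ya\lyapunovHelper[l_2\Rae,\Ra{t}+\Rae]$, whose derivative is given by \Cref{lem:derivativeRI} as
\[
2\ya\,\frac{\Rae\ln(\Ibe)}{\Ib{t}\ln^2(\Ib{t})}\,\ln\Bigl(\frac{\Ra{t}+\Rae}{l_2\Rae}\Bigr).
\]
We bound its factors separately:
\begin{itemize}
\item The prefactor satisfies $\frac{\Rae}{\Ib{t}}\leq\frac{\rb}{\cb\varepsilon_l}$.
\item The ratio satisfies $\frac{\ln(\Ibe)}{\ln^2(\Ib{t})}\leq\frac{\ln n}{\ln^2(\varepsilon_l n)}\in\bigO{1/\ln n}$.
\item The argument of the logarithm lies between $\frac{\Rae}{3\Rae}=\frac13$ and $\frac{n+\Rae}{\Rae}\leq 1+\cb/\rb$, so the logarithm is $\bigO{1}$.
\end{itemize}
This whole term is therefore $\bigO{1/\ln n}$, in particular bounded by a constant. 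The case of $\Ia{t}$ is symmetric. Taking $c_s$ larger than the maximum of these explicit constants completes the argument.

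The only delicate point is the cross term coming from \Cref{lem:derivativeRI}. One must check that the logarithm of the ratio of $\Ra{t}+\Rae$ to its moving target stays bounded. This holds because the shift by $\Rae$ keeps the numerator at least $\Rae$, and the bound $l_2<3$ keeps the denominator within a constant factor of $\Rae$. Everything else is routine.
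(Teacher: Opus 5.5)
Your proof is correct and takes the same route as the paper. You compute each partial derivative of $\Lyapunovc$ via $1 - x^*/x$ for $\lyapunovHelper$ and \Cref{lem:derivativeRI}, then bound everything using that the equilibrium values are linear in $n$, the parameters are constants, and the moving targets stay within a factor $3$ of $\Rae$ (resp.\ $\Rbe$), so the cross term is only $\bigO{1/\ln n}$. Your bookkeeping is slightly more careful than the paper's: you bound the logarithm's argument below by $1/3$ (the paper only uses the upper bound $2n/\Rbe$ inside the absolute value), and you use $\Ra{t}\ge 0$ to get the cleaner bound $4\ya$.
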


\begin{proof}
    We start by bounding $\left\lvert\frac{d\Lyapunovc_t}{d \Ia{t}}\right\rvert$. We have by \Cref{lem:derivativeL} that

    \begin{align*}
        \frac{d\Lyapunovc_t}{d \Ia{t}} & = \xa(1- \frac{\Iae}{\Ia{t}}) + 2\yb\frac{\Rbe \ln(\Iae)}{\Ia{t} \ln^2(\Ia{t})}\ln(\frac{\Rb{t}+\Rbe}{\Rbe(1+\frac{\ln(\Iae)}{\ln(\Ia{t})})}).
    \end{align*}

    As $\Rae + \Rbe < (1-c)n$, $\Iae$ is linear and thus $\sqrt{\Iae} < \varepsilon_l n \leq \Ia{t}$. Hence,

    $$1 \leq 1+\frac{\ln(\Iae)}{\ln(\Ia{t})} \leq 3.$$

    For large enough $c_s$ we get

    \begin{align*}
        \left\lvert\frac{d\Lyapunovc_t}{d \Ia{t}}\right\rvert = & \left\lvert\xa(1- \frac{\Iae}{\Ia{t}}) + 2\yb\frac{\Rbe \ln(\Iae)}{\Ia{t} \ln^2(\Ia{t})}\ln(\frac{\Rb{t}+\Rbe}{\Rbe(1+\frac{\ln(\Iae)}{\ln(\Ia{t})})})\right\rvert \\
        \leq                                                    & \xa(1+\frac{n}{\varepsilon_l n}) +2\yb \frac{n \ln(n)}{\varepsilon_l n \ln^2(\varepsilon_l n)}\ln(\frac{2n}{\Rbe})                                                 \\
        \leq                                                    & \xa(1+\frac{1}{\varepsilon_l}) +\frac{c_s}{\ln(n)}                                                                                                                 \\
        <                                                       & c_s.
    \end{align*}

    The bound for $\Ib{t}$ follows symmetrically. For $\Ra{t}$ (and symmetrical for $\Rb{t}$) we get for large enough $c_s$

    \begin{align*}
        \left\lvert\frac{d\Lyapunovc_t}{d \Ra{t}}\right\rvert = & \left\lvert2\ya(1-\frac{\Rae(1+\frac{\ln(\Ibe)}{\ln(\Ib{t})})}{\Ra{t}+\Rae})\right\rvert \\
        \leq                                                    & 2\ya (1+\frac{3n}{\Rae})                                                                 \\
        <                                                       & c_s.\qedhere
    \end{align*}
\end{proof}

We now define the drift of the process rigorously.

\begin{definition}
    Let $C$ be a \IRIR process on a graph $G$ with $n$ vertices and density $\density \in (0,1]$. Let $C$ have constant recovery rates $\ra,\rb \in \R_{>0}$ and effective infection rates $\lae=\ca/n$ and $\lbe=\cb/n$ for constants $\ca,\cb \in \R_{>0}$. Let $i \in \N$ and let $t\coloneqq \timeContinuous{i}$ and $t'\coloneqq \timeContinuous{i+1}$ such that $\Ia{t}\geq 2$ and $\Ib{t}\geq 2$. We define the \emph{drift} at time step $i$ to be
    \begin{align*}
        D_i & \coloneqq \E{\Lyapunovc_{t'}-\Lyapunovc_{t}}[\filtration_{t}].\qedhere
    \end{align*}
\end{definition}

Using the previously shown bounds we get the following lemma. It shows that the drift of the process equals the derivative normalized by the rate of change plus lower order terms.

\begin{lemma}\label{lem:driftDerivative}
    Let $C$ be a \IRIR process on a perfectly mixed graph $G$ with $n$ vertices and density $\density \in (0,1]$. Let $C$ have constant recovery rates $\ra,\rb \in \R_{>0}$ and effective infection rates $\lae=\ca/n$ and $\lbe=\cb/n$ for constants $\ca,\cb \in \R_{>0}$. Let $\varepsilon_l$ be a constant and let $i \in \N$ with $t\coloneqq \timeContinuous{i}$ such that $\Ia{t}\geq \varepsilon_l n$ and $\Ib{t}\geq \varepsilon_l n$. Let $r_t= \lae \Ia{t} \Rb{t} + \ra \Ia{t} +\lbe \Ib{t} \Ra{t} + \rb \Ib{t}$ the total rate of all clocks at time $t$. Then

    \begin{align*}
        D_i & = \frac{d \Lyapunovc_t}{d t}\cdot \frac{1}{r_t}\pm \bigO{n^{-1}}.\qedhere
    \end{align*}

\end{lemma}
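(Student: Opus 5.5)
We condition on $\filtration_t$ and use the embedded jump chain of the continuous-time Markov chain. On a perfectly mixed graph, the next state change is one of exactly four types. With probability $\lae\Ia{t}\Rb{t}/r_t$ we have $\Ia{t}\to\Ia{t}+1$ and $\Rb{t}\to\Rb{t}-1$. With probability $\ra\Ia{t}/r_t$ we have $\Ia{t}\to\Ia{t}-1$ and $\Ra{t}\to\Ra{t}+1$. The two remaining types are the symmetric ones for infection~$2$, with probabilities $\lbe\Ib{t}\Ra{t}/r_t$ and $\rb\Ib{t}/r_t$. Triggers that do not change the state are ignored by the definition of steps, so these are all transitions. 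Hence $D_i$ is the sum over the four types of (probability) times (change of $\Lyapunovc$).

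The first main step is to write each change of $\Lyapunovc$ as a directional derivative. Each transition changes two coordinates by $\pm1$. We split it into two unit moves, for example $(\Ia{t},\Rb{t})\to(\Ia{t}+1,\Rb{t})\to(\Ia{t}+1,\Rb{t}-1)$, and apply \Cref{lem:differenceDerivative} to each. This gives a change of $\partial_{\Ia{t}}\Lyapunovc-\partial_{\Rb{t}}\Lyapunovc\pm\bigO{n^{-1}}$. The second unit move is evaluated at a point shifted by one in $\Ia{t}$, so we also need the partial derivatives to change by only $\bigO{n^{-1}}$ under a unit shift of another coordinate. This follows from the explicit forms in \Cref{lem:derivativeL} and \Cref{lem:constantStep}: all of $\Ia{t},\Ib{t},\Rae,\Rbe,\Iae,\Ibe$ are in $\bigTheta{n}$, and each mixed second derivative is a constant parameter ($\xa,\ya,\dots\in\bigO{1}$) times a quantity of order $1/n$. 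One caveat: \Cref{lem:differenceDerivative} needs the $R$ coordinates in a range where the $\lyapunovHelper$ bounds of \Cref{lem:lyapunovHelper} apply. Because the arguments are shifted to $\Ra{t}+\Rae\geq\Rae\in\bigTheta{n}$, this holds even if $\Ra{t}$ is small, so no extra assumption on $R$ is needed.

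Summing with the probabilities gives
\[
D_i=\frac{1}{r_t}\sum_{\text{types}}(\text{rate})\cdot(\text{directional derivative})\pm\bigO{n^{-1}},
\]
because the probabilities sum to $1$ and the error is uniform. The weighted sum of directional derivatives equals $\sum_X \partial_X\Lyapunovc\cdot\frac{dX}{dt}$, taken over $X\in\{\Ia{t},\Ra{t},\Ib{t},\Rb{t}\}$ with the mean-field rates of \Cref{der:I,der:R}. By the chain rule this is exactly $\frac{d\Lyapunovc_t}{dt}$, which is the quantity computed in \Cref{lem:derivativeL}. This yields the claim.

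The main obstacle is controlling the error terms uniformly. First, the two-step decomposition must not accumulate more than $\bigO{n^{-1}}$. Second, the derivatives with respect to $\Ia{t}$ and $\Ib{t}$ include the log-target terms from \Cref{lem:derivativeRI}, so we have to check that the discretization in \Cref{lem:derivativeDiscrete} stays valid when $\Rb{t}$ or $\Ra{t}$ is simultaneously shifted by one. We would handle this by bounding the second derivatives directly with the $\bigTheta{n}$ estimates above and using $1\leq 1+\ln(\Iae)/\ln(\Ia{t})\leq 3$.
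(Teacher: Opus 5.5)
Your proposal is correct and takes essentially the same route as the paper: expand $D_i$ over the four transition types with probabilities $\text{rate}/r_t$, replace each change of $\Lyapunovc$ by the corresponding partial derivatives up to $\pm\bigO{n^{-1}}$ via \Cref{lem:differenceDerivative}, and regroup by coordinate to recover $\frac{d \Lyapunovc_t}{dt}\cdot\frac{1}{r_t}$. Your explicit handling of the fact that each transition moves two coordinates at once fills in a step that the paper's proof passes over silently; you split each transition into two unit moves and bound the mixed partials of $\Lyapunovc$ by $\bigO{n^{-1}}$.
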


\begin{proof}
    How the state of $C$ changes in the next step is decided by Poisson clocks. The probability of each of those clocks triggering next is proportional to their rates. Hence

    \begin{align*}
        D_i= & \frac{\lae \Ia{t} \Rb{t}}{r_t}\left(\Lyapunovc(\Ia{t}+1,\Ra{t},\Ib{t},\Rb{t}-1)-\Lyapunovc(\Ia{t},\Ra{t},\Ib{t},\Rb{t})\right)   \\
             & + \frac{\ra \Ia{t}}{r_t}\left(\Lyapunovc(\Ia{t}-1,\Ra{t}+1,\Ib{t},\Rb{t})-\Lyapunovc(\Ia{t},\Ra{t},\Ib{t},\Rb{t})\right)         \\
             & + \frac{\lbe \Ib{t} \Ra{t}}{r_t}\left(\Lyapunovc(\Ia{t},\Ra{t}-1,\Ib{t}+1,\Rb{t})-\Lyapunovc(\Ia{t},\Ra{t},\Ib{t},\Rb{t})\right) \\
             & + \frac{\rb \Ib{t}}{r_t}\left(\Lyapunovc(\Ia{t},\Ra{t},\Ib{t}-1,\Rb{t}+1)-\Lyapunovc(\Ia{t},\Ra{t},\Ib{t},\Rb{t})\right).
    \end{align*}

    We calculated the derivatives of $\Lyapunovc_t$ by all $I$'s and $R$'s before (see \Cref{lem:derivativeL}). From \Cref{lem:differenceDerivative} we get that increasing/decreasing one of the $I$'s or $R$'s by one changes the value of $\Lyapunovc_t$ by its derivative $\pm\bigO{n^{-1}}$. Also, as $\Ia{t}\geq \varepsilon_l n$ and $\Ib{t}\geq \varepsilon_l n$, all rates are in $\bigO{n}$, $r_t \in \bigTheta{n}$ and the derivatives are in $\bigO{1}$. Hence

    \begin{align*}
        D_i= & \frac{\lae \Ia{t} \Rb{t}-\ra \Ia{t}}{r_t}(\frac{d\Lyapunovc_t}{d\Ia{t}}\pm \bigO{n^{-1}})  \\
             & +\frac{\ra \Ia{t}-\lbe \Ib{t} \Ra{t}}{r_t}(\frac{d\Lyapunovc_t}{d\Ra{t}}\pm \bigO{n^{-1}}) \\
             & +\frac{\lbe \Ib{t} \Ra{t}-\rb \Ib{t}}{r_t}(\frac{d\Lyapunovc_t}{d\Ib{t}}\pm \bigO{n^{-1}}) \\
             & +\frac{\rb \Ib{t}-\lae \Ia{t} \Rb{t}}{r_t}(\frac{d\Lyapunovc_t}{d\Rb{t}}\pm \bigO{n^{-1}}) \\
        =    & \frac{d \Lyapunovc_t}{dt} \cdot \frac{1}{r_t}\pm \bigO{n^{-1}}.\qedhere
    \end{align*}
\end{proof}

We now bound the derivative of $\Lyapunovc$ in the region we care for. We show that it is sufficiently negative in the given region to apply a negative drift theorem later.

\begin{lemma}\label{lem:drift}
    Let $C$ be a \IRIR process on a perfectly mixed graph $G$ with $n$ vertices and density $\density \in (0,1]$. Let $C$ have constant recovery rates $\ra,\rb \in \R_{>0}$ and effective infection rates $\lae=\ca/n$ and $\lbe=\cb/n$ for constants $\ca,\cb \in \R_{>0}$. Let there be a constant $c\in \R_{>0}$ such that $\Rae + \Rbe < (1-c)n$. Let $\varepsilon_l$ be a constant and let $i \in \N$ with $t\coloneqq \timeContinuous{i}$ such that $\Ia{t}\geq \varepsilon_l n$ and $\Ib{t}\geq \varepsilon_l n$ and let there be a small enough constant $\varepsilon_h \in \R_{>0}$ such that $\Ia{t}\leq \varepsilon_h n$. Then there exists a constant $c_d\in \R_{>0}$ such that
    \begin{align*}
        \frac{d \Lyapunovc_t}{dt} & \leq -c_d\frac{n}{\ln(n)}.\qedhere
    \end{align*}
\end{lemma}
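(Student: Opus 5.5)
The plan is to start from \Cref{lem:derivativeL}, which writes $\frac{d\Lyapunovc_t}{dt}$ as $\frac{d\Lyapunovb_t}{dt}$ plus four correction terms. I would first bound these corrections and then split into cases according to how far $\Ra{t}$ and $\Rb{t}$ are from equilibrium. Throughout, in the regime $\varepsilon_l n\le \Ia{t}$ and $\Ib{t}\ge\varepsilon_l n$, all of $\Ia{t},\Ib{t},\Rae,\Rbe,\Iae,\Ibe$ are $\bigTheta{n}$ and $\xa,\xb,\ya,\yb\in\bigTheta{1}$. The ratios $\ln(\Iae)/\ln(\Ia{t})$ and $\ln(\Ibe)/\ln(\Ib{t})$ equal $1+\bigO{1/\ln n}$. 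Writing $\eta_1\coloneqq 1-\ln(\Iae)/\ln(\Ia{t})$ and $\eta_2\coloneqq 1-\ln(\Ibe)/\ln(\Ib{t})$, one gets $\eta_1=\ln(\Ia{t}/\Iae)/\ln(\Ia{t})$, so $|\eta_j|=\bigTheta{1/\ln n}$ whenever the ratio $\Ia{t}/\Iae$ (respectively $\Ib{t}/\Ibe$) is bounded away from $1$. The hypothesis $\Ia{t}\le\varepsilon_h n$ with $\varepsilon_h$ small forces $\Ia{t}\le \Iae/2$, hence $\eta_1\le -c'/\ln n$ for a constant $c'>0$.

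\textbf{Correction terms.} The second and fourth lines of \Cref{lem:derivativeL} (those with factor $\ln(\cdot)(\lambda^*R-\varrho)$) contain $\ln\bigl((\Rb{t}+\Rbe)/(\Rbe(2-\eta_1))\bigr)$. Up to $\bigO{1/\ln n}$ this is $\ln(1+\db{t}/(2\Rbe))=\bigO{|\db{t}|/n}$. The factor $\lae\Rb{t}-\ra=\lae\db{t}$ is $\bigO{|\db{t}|/n}$, and the prefactor is $\bigTheta{n/\ln^2 n}$. So these two terms are $\bigO{\db{t}^2/(n\ln^2 n)+|\db{t}|/\ln^3 n}$ and symmetrically in $\da{t}$. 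The third and fifth lines carry $\eta_2$ and $\eta_1$ times $\Theta(1)\cdot(\text{net rate of } R)$. On the diagonal $\da{t}=\db{t}=0$, the net rate $\rb\Ib{t}-\lae\Ia{t}\Rb{t}$ equals $\rb\Ib{t}-\ra\Ia{t}$. This is exactly the mechanism motivating \Cref{def:lyapunovPotential3}: with $\Ia{t}$ well below $\Iae$ (hence, on the diagonal, $\Ib{t}$ above $\Ibe$ since $\Ia{t}+\Ib{t}=I^*$), the sign of $\eta_1$ and the sign of $\rb\Ib{t}-\ra\Ia{t}$ are opposite. Both factors are of linear, respectively $1/\ln n$, size. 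The same holds for the $\eta_2$ term, with $\eta_2\ge0$ and $\ra\Ia{t}-\rb\Ib{t}<0$. Thus at the diagonal the correction is $\le -c''n/\ln n$. By Lipschitz continuity in $\da{t},\db{t}$, it stays $\le -c''n/(2\ln n)$ as long as $|\da{t}|+|\db{t}|\le \beta n$ for a small constant $\beta$. Away from the diagonal the corrections are in absolute value $\bigO{n/\ln n}$ in total.

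\textbf{Case split.} \emph{Case 1}, $|\da{t}|+|\db{t}|\ge\beta n$: by \Cref{lem:derivativeK}, $\frac{d\Lyapunovb_t}{dt}\le -\frac1n\cdot\Theta(1)\cdot(\da{t}^2+\db{t}^2)\le -\Omega(\beta^2 n)$, since the coefficients $(\Ia{t}+\frac{\rb}{\ra}\Ib{t})/(\Ra{t}+\Rae)$ are $\bigTheta{1}$. This dominates the $\bigO{n/\ln n}$ corrections for large $n$. \emph{Case 2}, $|\da{t}|+|\db{t}|<\beta n$: $\frac{d\Lyapunovb_t}{dt}\le0$ by \Cref{lem:derivativeK}, and the correction terms give $\le -c''n/(2\ln n)$ by the argument above. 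The small $\bigO{|\delta|/\ln^3n}$ and $\bigO{\delta^2/(n\ln^2n)}$ pieces are of lower order, or absorbed by $\frac{d\Lyapunovb_t}{dt}$. Taking $c_d$ as the smaller of the two constants gives the claim.

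\textbf{Main obstacle.} The hardest step is Case 2: verifying the sign and the linear size of $\eta_1(\rb\Ib{t}-\lae\Ia{t}\Rb{t})+\eta_2(\ra\Ia{t}-\lbe\Ib{t}\Ra{t})$ near the diagonal. This needs $\varepsilon_h$ chosen small enough that $\rb\Ib{t}-\ra\Ia{t}\ge\Omega(n)$, using $\Ib{t}=I^*-\Ia{t}-(\da{t}+\db{t})\ge cn-\varepsilon_h n-\beta n$. It also requires the perturbation from nonzero $\da{t},\db{t}$, which is of size $\bigO{\beta n/\ln n}$, to stay below half the main term; I would first try $\beta$ a small multiple of $c\rb$.
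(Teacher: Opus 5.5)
Your proposal is correct and follows the paper's proof essentially step for step. You start from \Cref{lem:derivativeL} and split on whether $\da{t},\db{t}$ are of linear size or small. In the first case $\frac{d\Lyapunovb_t}{dt}\le-\Omega(n)$ swamps the $\bigO{n/\ln(n)}$ corrections. In the second case $\frac{d\Lyapunovb_t}{dt}\le 0$, and the two terms carrying $1-\ln(\Ibe)/\ln(\Ib{t})$ and $1-\ln(\Iae)/\ln(\Ia{t})$ contribute $-\Omega(n/\ln(n))$, since each of these factors has the sign opposite to its rate factor; the remaining terms are made negligible by choosing the threshold constant small.

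There is one minor slip. The prefactor $2\yb\Rbe\ln(\Iae)/\ln^2(\Ia{t})$ is $\bigTheta{n/\ln(n)}$, not $\bigTheta{n/\ln^2(n)}$, so those terms are $\bigO{\db{t}^2/(n\ln(n))+|\db{t}|/\ln^2(n)}$ (symmetrically in $\da{t}$). This is still harmless: the quadratic part is absorbed by $\frac{d\Lyapunovb_t}{dt}\le-\Omega(\db{t}^2/n)$, as you already suggest. Alternatively, as in the paper, it can be bounded by $\bigO{\beta n/\ln(n)}$ and beaten by taking $\beta$ small.
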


\begin{proof}
    Consider $\da{t}$ and $\db{t}$. We do a case distinction on whether both of the $\delta$'s are small or not. To this end let $\varepsilon_\delta \in \R_{>0}$ be a small enough constant that we choose later.

    First assume that $\max(\lvert\da{t}\rvert,\lvert\db{t}\rvert)\geq \varepsilon_\delta n$. We get using \Cref{lem:derivativeL} that there is a large enough constant $A\in \R_{>0}$ such that

    \begin{align*}
        \frac{d \Lyapunovc_t}{dt}  = & \frac{d \Lyapunovb_t}{dt}                                                                                                            \\
                        & +2\yb\frac{\Rbe \ln(\Iae)}{ \ln^2(\Ia{t})}\ln(\frac{\Rb{t}+\Rbe}{\Rbe(1+\frac{\ln(\Iae)}{\ln(\Ia{t})})})(\lae\Rb{t}-\ra) \\
                        & +2\ya\frac{\Rae(1-\frac{\ln(\Ibe)}{\ln(\Ib{t})})}{\Ra{t}+\Rae}(\ra\Ia{t}-\lbe\Ib{t}\Ra{t})                               \\
                        & +2\ya\frac{\Rae \ln(\Ibe)}{ \ln^2(\Ib{t})}\ln(\frac{\Ra{t}+\Rae}{\Rae(1+\frac{\ln(\Ibe)}{\ln(\Ib{t})})})(\lbe\Ra{t}-\rb) \\
                        & +2\yb\frac{\Rbe(1-\frac{\ln(\Iae)}{\ln(\Ia{t})})}{\Rb{t}+\Rbe}(\rb\Ib{t}-\lae\Ia{t}\Rb{t})                               \\
        =               & \frac{d \Lyapunovb_t}{dt}                                                                                                            \\
                        & +2\yb\frac{\Rbe \ln(\Iae)}{ \ln^2(\Ia{t})}\ln(\frac{\Rb{t}+\Rbe}{\Rbe(1+\frac{\ln(\Iae)}{\ln(\Ia{t})})})(\lae\db{t})     \\
                        & +2\ya\frac{\Rae(\ln(\Ib{t})-\ln(\Ibe))}{(\Ra{t}+\Rae)\ln(\Ib{t})}(\ra\Ia{t}-\lbe\Ib{t}\Ra{t})                            \\
                        & +2\ya\frac{\Rae \ln(\Ibe)}{ \ln^2(\Ib{t})}\ln(\frac{\Ra{t}+\Rae}{\Rae(1+\frac{\ln(\Ibe)}{\ln(\Ib{t})})})(\lbe\da{t})     \\
                        & +2\yb\frac{\Rbe(\ln(\Ia{t})-\ln(\Iae))}{(\Rb{t}+\Rbe)\ln(\Ia{t})}(\rb\Ib{t}-\lae\Ia{t}\Rb{t})                            \\
        \leq            & \frac{d \Lyapunovb_t}{dt}                                                                                                            \\
                        & +2\yb\frac{n \ln(n)}{ \ln^2(\varepsilon_l n)}\ln(\frac{2n}{\Rbe})(\ca)                                                   \\
                        & +2\ya\frac{n \ln(n/\Ibe)}{\Rae\ln(\varepsilon_l n)}(\ra n)                                                               \\
                        & +2\ya\frac{n \ln(n)}{ \ln^2(\varepsilon_l n)}\ln(\frac{2n}{\Rae})(\cb)                                                   \\
                        & +2\yb\frac{n \ln(n/\Iae)}{\Rbe\ln(\varepsilon_l n)}(\rb n)                                                               \\
        \leq            & \frac{d \Lyapunovb_t}{dt} + A \frac{n}{\ln(n)}.
    \end{align*}

    Using \Cref{lem:derivativeK} with the fact that $\max(\lvert\da{t}\rvert,\lvert\db{t}\rvert)\geq \varepsilon_\delta n$, $\Ia{t} \geq \varepsilon_l n$, and $\Ib{t} \geq \varepsilon_l n$, we get that there is a constant $a \in \R_{>0}$ such that $\frac{d \Lyapunovb_t}{dt} < -an$. That is the case as all terms in \Cref{lem:derivativeK} are negative and the term including the larger $\delta$ is linear in $n$. Plugging this into the above equation gives us

    \begin{align*}
        \frac{d \Lyapunovc_t}{dt} & \leq -an + A \frac{n}{\ln(n)} \\
                      & < -\frac{a}{2}n
    \end{align*}
    As this is even asymptotically smaller than what is claimed in the lemma, that concludes this case.

    For the second case assume $\max(\lvert\da{t}\rvert,\lvert\db{t}\rvert)< \varepsilon_\delta n$. Note that for small enough $\varepsilon_\delta$ and $\varepsilon_h$, this together with $\Ia{t} \leq \varepsilon_h n$ implies that $\Ib{t}$ is above its equilibrium by a constant factor, making $\ln(\Ib{t})- \ln(\Ibe)$ positive. At the same time, $\Ia{t}$ is below its equilibrium, hence $\ln(\Ia{t})- \ln(\Iae)$ is negative. It also holds that $(\ra\Ia{t}-\lbe\Ib{t}\Ra{t})$ is upper bounded by a negative constant and $(\rb\Ib{t}-\lae\Ia{t}\Rb{t})$ is lower bounded by a positive constant. Hence in the derivative, the two terms including these rates both end up negative. We showed that $\frac{d \Lyapunovb_t}{dt}\leq 0$ in every state. Using these facts, we get that there is a small enough constant $a \in \R_{>0}$ and a large enough constant $A \in \R_{>0}$ both independent of $\varepsilon_\delta$ such that

    \begin{align*}
        \frac{d \Lyapunovc_t}{dt}  = & \frac{d \Lyapunovb_t}{dt}                                                                                                        \\
                        & +2\yb\frac{\Rbe \ln(\Iae)}{ \ln^2(\Ia{t})}\ln(\frac{\Rb{t}+\Rbe}{\Rbe(1+\frac{\ln(\Iae)}{\ln(\Ia{t})})})(\lae\db{t}) \\
                        & +2\ya\frac{\Rae(\ln(\Ib{t})-\ln(\Ibe))}{(\Ra{t}+\Rae)\ln(\Ib{t})}(\ra\Ia{t}-\lbe\Ib{t}\Ra{t})                        \\
                        & +2\ya\frac{\Rae \ln(\Ibe)}{ \ln^2(\Ib{t})}\ln(\frac{\Ra{t}+\Rae}{\Rae(1+\frac{\ln(\Ibe)}{\ln(\Ib{t})})})(\lbe\da{t}) \\
                        & +2\yb\frac{\Rbe(\ln(\Ia{t})-\ln(\Iae))}{(\Rb{t}+\Rbe)\ln(\Ia{t})}(\rb\Ib{t}-\lae\Ia{t}\Rb{t})                        \\
        \leq            & 0                                                                                                                    \\
                        & +2\yb\frac{n \ln(n)}{ \ln^2(\varepsilon_l n)}\ln(\frac{2n}{\Rbe})(\ca \varepsilon_\delta)                            \\
                        & -2\ya\frac{\Rae (\ln(\Ib{t})-\ln(\Ibe))}{2n\ln(n)}(\lbe \Ib{t}\Ra{t}-\ra \varepsilon_h n)                            \\
                        & +2\ya\frac{n \ln(n)}{ \ln^2(\varepsilon_l n)}\ln(\frac{2n}{\Rae})(\cb \varepsilon_\delta)                            \\
                        & -2\yb\frac{\Rbe (\ln(\Iae)-\ln(\Ia{t})))}{2n\ln(n)}(\rb \Ib{t}-\lae \Rb{t}\varepsilon_h n)                           \\
        \leq            & \varepsilon_\delta A \frac{n}{\ln(n)}                                                                                \\
                        & - a \frac{n}{\ln(n)}                                                                                                 \\
                        & + \varepsilon_\delta A \frac{n}{\ln(n)}                                                                              \\
                        & - a \frac{n}{\ln(n)}                                                                                                 \\
        \leq            & - a \frac{n}{\ln(n)}.
    \end{align*}

    The last inequality holds when choosing $\varepsilon_\delta \leq \frac{a}{2A}$. This concludes the second case and therefore the proof.
\end{proof}

We now show the main technical theorem which is an application of the negative drift theorem. It gives a bound on the distribution of the survival time of an IRIR process on perfectly mixed graphs.

\begin{theorem}\label{thm:clique}
    Let $C$ be a \IRIR process on a perfectly mixed graph $G$ with $n$ vertices and density $\density \in (0,1]$. Let $C$ have constant recovery rates $\ra,\rb \in \R_{>0}$ and effective infection rates $\lae=\ca/n$ and $\lbe=\cb/n$ for constants $\ca,\cb \in \R_{>0}$. Let there be a constant $c\in \R_{>0}$ such that $\Rae + \Rbe < (1-c)n$. Let there be a small enough constant $\varepsilon_h \in \R_{>0}$ such that $\Ia{0}\geq \varepsilon_h n$ and $\Ib{0}\geq \varepsilon_h n$. Let $T_{\mathrm{d}}$ be the number of steps until the infections in $C$ die out. Then there exists a constant $c_T \in \R_{>0}$ such that for sufficiently large $n$
    \begin{align*}
        \Pr{T_{\mathrm{d}}\leq t} & \leq t^2 \cdot e^{-c_T\frac{n}{\ln(n)}}.\qedhere
    \end{align*}
\end{theorem}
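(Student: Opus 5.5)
We apply \Cref{pre:negativeDrift} to a shifted and stopped version of the potential $\Lyapunovc$ along the discrete steps $\timeContinuous{i}$.

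\textbf{Choice of constants.} First fix the constant $\varepsilon_h$ from the statement. It must be small enough for \Cref{lem:drift} to apply. Let $c_l$ be the constant that \Cref{lem:potentialOneSmall} provides for a suitable $\varepsilon_h' \geq \varepsilon_h$. Next choose $\varepsilon_l < \varepsilon_h$ so small that the constant $c_h$ of \Cref{lem:potentialBothBig} satisfies $c_h > c_l + 1$; this is possible since $c_h$ grows like $\ln(1/\varepsilon_l)$. Define
\begin{align*}
    X_i \coloneqq \Lyapunovc_{\timeContinuous{i}} - c_h' n, \qquad b \coloneqq (c_h - c_h')n, \qquad a \coloneqq (c_l - c_h')n,
\end{align*}
with $c_h'$ chosen strictly between $c_l$ and $c_h$ so that $a<0<b$. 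Further choose $\varepsilon_h$ small enough that $\Ia{0},\Ib{0}\geq \varepsilon_h n$ yields $\Lyapunovc_0 \leq c_h' n$, so that $X_0 \leq 0$. This requires care: \Cref{lem:potentialOneSmall} bounds $\Lyapunovc$ from above when both $I$'s are large, but the bound depends on the threshold. I would instead start the drift argument from the first time the potential enters $[a+c_h'n, b+c_h'n)$ from below, and otherwise handle the start by bounding $\Lyapunovc_0$ directly via the term estimates in the proof of \Cref{lem:potentialOneSmall}.

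\textbf{Region where drift is needed.} Whenever $X_i \in [a,b)$, \Cref{lem:potentialBothBig} gives $\Ia{t},\Ib{t} > \varepsilon_l n$. By \Cref{lem:potentialOneSmall}, one of $\Ia{t},\Ib{t}$ is below $\varepsilon_h' n$, and by symmetry we may assume it is $\Ia{t}$. Then \Cref{lem:drift} gives $\frac{d\Lyapunovc_t}{dt} \leq -c_d n/\ln(n)$. Since $r_t\in\bigTheta{n}$ in this region, \Cref{lem:driftDerivative} yields
\begin{align*}
    D_i \leq -\frac{c_d}{\bigO{1}\ln(n)} + \bigO{n^{-1}} \leq -\varepsilon,
\end{align*}
with $|\varepsilon| = \bigTheta{1/\ln(n)}$. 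This is \cref{item:negativeDrift:drift}.

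\textbf{Step sizes.} For \cref{item:negativeDrift:boundedStepSize}, each step changes two of the four counts by one. By \Cref{lem:differenceDerivative} and \Cref{lem:constantStep}, the potential therefore changes by at most $2c_s + \bigO{n^{-1}} < c$ for a constant $c$. For \cref{item:negativeDrift:noLargeJumps}, if $X_i < a$ then $X_{i+1} < a + c \leq 0$ for large $n$. This holds because the same step bound applies: values below $a$ still keep both $I$'s above $\varepsilon_l n$ by \Cref{lem:potentialBothBig}. Note also that \Cref{lem:lyapunovHelper} requires arguments greater than $2$, which holds here.

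\textbf{Conclusion.} \Cref{pre:negativeDrift} gives $\Pr{T\leq t}\leq t^2\exp(-b|\varepsilon|/(2c^2)) = t^2 e^{-c_T n/\ln(n)}$. Dying out requires some $I$ to reach $0$, hence to pass below $\varepsilon_l n$, which forces $\Lyapunovc \geq c_h n$ and thus $X \geq b$. Therefore $T_{\mathrm{d}} \geq T$, and the bound transfers to $T_{\mathrm{d}}$.

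\textbf{Main obstacle.} The delicate point is the consistent choice of constants. The requirement $c_l < c_h$, together with the conditions that the region $X\geq a$ forces one $I$ below the $\varepsilon_h$ of \Cref{lem:drift} and that $X_0\leq 0$, must all hold simultaneously. I would fix $\varepsilon_h$ first, then $c_l$, and then $\varepsilon_l$.
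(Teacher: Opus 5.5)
Your proposal follows the paper's proof. It uses the same shifted potential $\Lyapunovc$ and the same band $[c_l n, c_h n)$ obtained from \Cref{lem:potentialOneSmall} and \Cref{lem:potentialBothBig}. It fixes constants in the same order ($\varepsilon_h$, then $c_l$, then $\varepsilon_l$) and verifies the three conditions of \Cref{pre:negativeDrift} with the same lemmas (\Cref{lem:drift,lem:driftDerivative,lem:differenceDerivative,lem:constantStep}). It concludes via $T_{\mathrm{d}} \geq T$.

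The one muddled point is the start, and it is not a real obstacle. Apply \Cref{lem:potentialOneSmall} with the theorem's own $\varepsilon_h$, that is, take $\varepsilon_h' = \varepsilon_h$. Its contrapositive at time $0$ gives $\Lyapunovc_0 \le c_l n < c_h' n$, hence $X_0 \le 0$. The same $\varepsilon_h$ is also small enough for \Cref{lem:drift}, which is exactly what the paper does.

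Your choice $\varepsilon_h' \geq \varepsilon_h$ points the wrong way for both uses. The start needs $\varepsilon_h' \le \varepsilon_h$, and the drift lemma needs $\varepsilon_h'$ itself to be small. Two of your proposed remedies would not work, so drop them:
\begin{itemize}
\item Shrinking $\varepsilon_h$ only weakens the hypothesis on $\Ia{0},\Ib{0}$, so it cannot help bound $\Lyapunovc_0$.
\item Restarting the argument at the first entry into the band from below does not cover a start above the band.
\end{itemize}
Keep the direct bound on $\Lyapunovc_0$ from the term estimates of \Cref{lem:potentialOneSmall}, which you also mention.
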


\begin{proof}
    The idea is to use the negative drift theorem \Cref{pre:negativeDrift} on the potential function $\Lyapunovc$ from \Cref{def:lyapunovPotential3}. To this end we first show that there is an interval of linear length in the potential space in which all states that fall into this interval have a linear number of infected vertices but one of the infected vertices is below $\varepsilon_h n$. Those restrictions are necessary for most of our lemmas to apply. We then apply the lemmas proven in this section to show that in this region the step size is bounded by a constant and the drift is negative. This lets us apply \Cref{pre:negativeDrift} to bound the survival time.

    By \Cref{lem:potentialOneSmall} there exists a constant $c_l$ such that $\Lyapunovc_t> c_l n$ implies $\Ia{t}< \varepsilon_h n$ or $\Ib{t} < \varepsilon_h n$. Note that this means that $\Lyapunovc_0 \leq c_l n$.

    Let $\varepsilon_l \in \R_{>0}$ be a constant such that the corresponding $c_h$ from \Cref{lem:potentialBothBig} is at least $2c_l$. We then get by applying \Cref{lem:potentialBothBig} that $\Lyapunovc_t < c_h n$ implies $\Ia{t}> \varepsilon_l n$ and $\Ib{t} > \varepsilon_l n$.

    That means that as long as $\Lyapunovc_t < c_h n$, the conditions for \Cref{lem:constantStep} and \Cref{lem:differenceDerivative} are met and applying both of them together gives us that there is a constant $c_s\in \R_{>0}$ such that the change of $\Lyapunovc_t$ within one step is upper bounded by $c_s$.

    We now define the process $(X_i)_{i\in \N}$ which is the discrete version of $\Lyapunovc_t$ shifted and scaled so that we can apply the negative drift theorem. It is defined as

    \begin{align*}
        X_i & = \frac{\Lyapunovc_{\tau_{i}}-c_l n}{c_s}-1.
    \end{align*}

    We now show that all conditions of \Cref{pre:negativeDrift} are met. Let $(\filtration_i)_{i\in \N}$ be the natural filtration of $X_i$. Let $a=-1$, $b=\frac{c_h n - c_l n}{c_s}-1$, $c=1$ and let $T_{\mathrm{d}}'=\inf\{i \in \N \mid X_i \geq b\}$. First note that $\Lyapunovc_0 \leq c_l n$ implies that $X_0 \leq 0$. For all $i<T_{\mathrm{d}}'$ it holds that $X_i < b$ which implies $\Lyapunovc_{\tau_{i}} \leq c_h n$ and therefore $\Ia{\tau_{i}}> \varepsilon_l n$ and $\Ib{\tau_{i}} > \varepsilon_l n$. We use this to show that the three items are fulfilled for all $i<T_{\mathrm{d}}'$.

    For the first point note that $\E{(X_{i+1}-X_i)}[\filtration_i]= \frac{D_i}{c_s}$. For states with $X_i \geq a$ we get by \Cref{lem:drift} that there is a constant $c_d' \in \R_{>0}$ such that $\frac{d \Lyapunovc_t}{dt} \leq -c_d'\frac{n}{\ln(n)}$. Together with \Cref{lem:driftDerivative} and the fact that with $\Ia{\tau_i} \geq \varepsilon_l n$ it holds $r_{\tau_i} \in \bigTheta{n}$ we get that there is a constant $c_d \in \R_{>0}$ such that $D_i \leq -\frac{c_d}{\ln(n)}$. Hence

    \begin{align*}
        \E{(X_{i+1}-X_i)\cdot \indicator{X_i\geq a}}[\filtration_i]\leq -\frac{c_d}{\ln(n)}\cdot \indicator{X_i\geq a}.
    \end{align*}

    The second point follows directly from the fact that $\Lyapunovc_{\tau_i}$ changes by at most $c_s$ in one step for all $i<T_{\mathrm{d}}'$. Hence the step size of $X_i$ is upper bounded by $c=1$.

    This also directly implies the third point as $a=-1$, hence one step cannot change $X_i<a$ to $X_{i+1}>0$.

    Applying \Cref{pre:negativeDrift} gives us for all $t \in \N$
    \begin{align*}
        \Pr{T_{\mathrm{d}}'\leq t} & \leq t^2 \cdot e^{-\frac{(\frac{c_h n - c_l n}{c_s}-1)\frac{c_d}{\ln(n)}}{2}}.
    \end{align*}

    As for all $i<T_{\mathrm{d}}'$ holds $\Ia{\tau_{i}}> \varepsilon_l n$ and $\Ib{\tau_{i}} > \varepsilon_l n$, we know $T_{\mathrm{d}}\geq T_{\mathrm{d}}'$. Hence using the above inequality we get that there is a constant $c_T \in \R_{>0}$ such that for sufficiently large $n$

    \begin{align*}
        \Pr{T_{\mathrm{d}}\leq t} & \leq t^2 \cdot e^{-c_T\frac{n}{\ln(n)}}.\qedhere
    \end{align*}
\end{proof}

We extend this theorem to jumbled graphs. To this end we first show that the drift on jumbled graphs does only differ by the drift on cliques by lower order terms.

\begin{lemma}\label{lem:cliqueJumbled}
    Let $C$ be a \IRIR process on a perfectly mixed graph $G$ with $n$ vertices and density $\density \in (0,1]$. Let $C$ have constant recovery rates $\ra,\rb \in \R_{>0}$ and effective infection rates $\lae=\ca/n$ and $\lbe=\cb/n$ for constants $\ca,\cb \in \R_{>0}$. Let $\varepsilon_l$ be a constant and let $i \in \N$ with $t\coloneqq \timeContinuous{i}$ such that $\Ia{t}\geq \varepsilon_l n$ and $\Ib{t}\geq \varepsilon_l n$.

    Let $k \in \bigOmega{1}$ and let $C^*$ be a \IRIR process on a $(p,p n/k)$-jumbled graph $G^*$ with $n$ vertices. Let $C^*$ also have the recovery rates $\ra,\rb$ and effective infection rates $\lae, \lbe$. Furthermore, at step $i \in \N$, let $C^*$ have the same number of vertices in each state as in $C$. Then
    \begin{align*}
        D^*_i & = D_i \pm \bigO{k^{-1}}.\qedhere
    \end{align*}

\end{lemma}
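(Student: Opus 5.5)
We write out $D^*_i$ exactly as in the proof of \Cref{lem:driftDerivative}, as a sum over the four possible transition types, weighted by their probabilities of occurring next. The only difference between $C$ and $C^*$ is in the rates of the two infection transitions. In $C$ the rate of an infection of type~$1$ is $\lae \Ia{t}\Rb{t}$. In $C^*$ it is $\la e(\Ia{t}',\Rb{t}')$, and we interpret the effective rate as $\lae = \la p$. Recovery rates $\ra\Ia{t}$ and $\rb\Ib{t}$ are identical in both processes.

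The potential differences $\Lyapunovc(\cdot)-\Lyapunovc(\cdot)$ depend only on the counts of vertices in each state. They are therefore identical in both processes, and by \Cref{lem:constantStep} and \Cref{lem:differenceDerivative} they are bounded by a constant $c_s+\bigO{n^{-1}}$.

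The key estimate comes from the jumbledness lemma stated in \Cref{sec:tools}. For disjoint $U,W$ it gives $|e(U,W)-p|U||W||\le 2(pn/k)(|U|+|W|)\le 4pn^2/k$. Multiplying by $\la=\lae/p=\ca/(pn)$, the infection rate of type~$1$ in $C^*$ is
\begin{align*}
\lae\Ia{t}\Rb{t}\pm 4\ca n/k ,
\end{align*}
and analogously for type~$2$. Both infection rates thus agree with those of $C$ up to an additive $\bigO{n/k}$. The total rate satisfies $r^*_t=r_t\pm\bigO{n/k}$. Since $\Ia{t},\Ib{t}\ge\varepsilon_l n$, we have $r_t\ge \ra\varepsilon_l n\in\bigTheta{n}$. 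Hence, for $k$ larger than a suitable constant (the case of bounded $k$ is trivial, since both drifts are $\bigO{1}$), $r^*_t\in\bigTheta{n}$ as well.

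Each transition probability $q^*_j=\text{rate}^*_j/r^*_t$ then differs from $q_j=\text{rate}_j/r_t$ by at most
\begin{align*}
\frac{|\text{rate}^*_j-\text{rate}_j|}{r^*_t}+\text{rate}_j\left|\frac{1}{r^*_t}-\frac{1}{r_t}\right| \le \frac{\bigO{n/k}}{\bigTheta{n}}+\bigO{n}\cdot\frac{\bigO{n/k}}{\bigTheta{n^2}}=\bigO{k^{-1}}.
\end{align*}
We then compute $D^*_i-D_i=\sum_j (q^*_j-q_j)\Delta_j$, where $\Delta_j$ are the common potential changes. With four terms, each bounded by $\bigO{k^{-1}}\cdot\bigO{1}$, this gives $D^*_i=D_i\pm\bigO{k^{-1}}$.

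The main obstacle is bookkeeping rather than depth. The jumbledness bound must be translated correctly into rate errors relative to the normalization $\lae=\la p$, checking that the factor $p$ cancels so the error is $\bigO{n/k}$ rather than $\bigO{n/(pk)}$. The perturbed denominator $r^*_t$ must also stay of order~$n$, and this is exactly where the lower bounds $\Ia{t},\Ib{t}\ge\varepsilon_l n$ are used.
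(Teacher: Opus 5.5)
Your proposal is correct and takes essentially the same route as the paper. It uses the same four-transition expansion of $D^*_i$, and the same jumbled edge-count bound, which gives an $\bigO{n/k}$ rate error once the factor $p$ cancels against $\lae=\la p$. It also uses \Cref{lem:differenceDerivative} and \Cref{lem:constantStep} in the same way to bound the common potential changes by a constant. Your explicit treatment of the perturbed denominator $r^*_t$ only spells out a step the paper does in one line, and the case split on bounded $k$ is unnecessary: the identical recovery terms already give $r^*_t\geq \ra\varepsilon_l n$.
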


\begin{proof}
    As both $C$ and $C^*$ have the same number of vertices in each state, by definition of $\Lyapunovc$ and by the choice of the model parameters, the values of $\Lyapunovc$ and $\Lyapunovc^*$ are the same. Hence the only difference in the drift are the rates at which vertices change states. The recovery rate is the same in both processes, hence the total rates of vertices recovering are also the same. Let $e^*_t(I_2,R_1)$ be the number of edges in $G^*$ from vertices in $I_2$ to vertices in $R_1$ at time $t$ (and equivalently for 1 and 2 swapped). Then in $C^*$ vertices get infected by $I_2$ at a rate of $\lbe/p \cdot e^*_t(I_2,R_1)$. As $G^*$ is $(p,p /k)$-jumbled, this can be written as $\lbe/p \cdot (p \Ib{t} \Ra{t} \pm \bigO{p  n^2/k}) = \lbe \Ib{t} \Ra{t} \pm \bigO{n/k}$. Let $r_t= \lae \Ia{t} \Rb{t} + \ra \Ia{t} +\lbe \Ib{t} \Ra{t} + \rb \Ib{t}$ the total rate of all clocks at time $t$ in $C$. Let $r^*_t$ be the equivalent rate in $C^*$. Then $r_t \in \bigTheta{n}$ and $r^*_t = r_t \pm \bigO{n/k}$. By \Cref{lem:differenceDerivative} and \Cref{lem:constantStep} the changes is the $L$ values in one step are in $\bigO{1}$. Using all of that we get

    \begin{align*}
        D^*_i= & \frac{\lae/p \cdot e^*_t(I_1,R_2)}{r^*_t}\left(\Lyapunovc^*(\Ia{t}+1,\Ra{t},\Ib{t},\Rb{t}-1)-\Lyapunovc^*(\Ia{t},\Ra{t},\Ib{t},\Rb{t})\right)                   \\
               & + \frac{\ra \Ia{t}}{r^*_t}\left(\Lyapunovc^*(\Ia{t}-1,\Ra{t}+1,\Ib{t},\Rb{t})-\Lyapunovc^*(\Ia{t},\Ra{t},\Ib{t},\Rb{t})\right)                                  \\
               & + \frac{\lbe/p \cdot e^*_t(I_2,R_1)}{r^*_t}\left(\Lyapunovc^*(\Ia{t},\Ra{t}-1,\Ib{t}+1,\Rb{t})-\Lyapunovc^*(\Ia{t},\Ra{t},\Ib{t},\Rb{t})\right)                 \\
               & + \frac{\rb \Ib{t}}{r^*_t}\left(\Lyapunovc^*(\Ia{t},\Ra{t},\Ib{t}-1,\Rb{t}+1)-\Lyapunovc^*(\Ia{t},\Ra{t},\Ib{t},\Rb{t})\right)                                  \\
        =      & \frac{\lae \Ia{t} \Rb{t} \pm \bigO{n/k}}{r_t \pm \bigO{n/k}}\left(\Lyapunovc(\Ia{t}+1,\Ra{t},\Ib{t},\Rb{t}-1)-\Lyapunovc(\Ia{t},\Ra{t},\Ib{t},\Rb{t})\right)    \\
               & + \frac{\ra \Ia{t}}{r_t \pm \bigO{n/k}}\left(\Lyapunovc(\Ia{t}-1,\Ra{t}+1,\Ib{t},\Rb{t})-\Lyapunovc(\Ia{t},\Ra{t},\Ib{t},\Rb{t})\right)                         \\
               & + \frac{\lbe \Ib{t} \Ra{t} \pm \bigO{n/k}}{r_t \pm \bigO{n/k}}\left(\Lyapunovc(\Ia{t},\Ra{t}-1,\Ib{t}+1,\Rb{t})-\Lyapunovc(\Ia{t},\Ra{t},\Ib{t},\Rb{t})\right)  \\
               & + \frac{\rb \Ib{t}}{r_t \pm \bigO{n/k}}\left(\Lyapunovc(\Ia{t},\Ra{t},\Ib{t}-1,\Rb{t}+1)-\Lyapunovc(\Ia{t},\Ra{t},\Ib{t},\Rb{t})\right)                         \\
        =      & \left(\frac{\lae \Ia{t} \Rb{t} }{r_t } \pm \bigO{k^{-1}}\right)\left(\Lyapunovc(\Ia{t}+1,\Ra{t},\Ib{t},\Rb{t}-1)-\Lyapunovc(\Ia{t},\Ra{t},\Ib{t},\Rb{t})\right) \\
               & + \left(\frac{\ra \Ia{t}}{r_t }\pm \bigO{k^{-1}}\right)\left(\Lyapunovc(\Ia{t}-1,\Ra{t}+1,\Ib{t},\Rb{t})-\Lyapunovc(\Ia{t},\Ra{t},\Ib{t},\Rb{t})\right)         \\
               & + \left(\frac{\lbe \Ib{t} \Ra{t}}{r_t}\pm \bigO{k^{-1}}\right)\left(\Lyapunovc(\Ia{t},\Ra{t}-1,\Ib{t}+1,\Rb{t})-\Lyapunovc(\Ia{t},\Ra{t},\Ib{t},\Rb{t})\right)  \\
               & + \left(\frac{\rb \Ib{t}}{r_t}\pm \bigO{k^{-1}}\right)\left(\Lyapunovc(\Ia{t},\Ra{t},\Ib{t}-1,\Rb{t}+1)-\Lyapunovc(\Ia{t},\Ra{t},\Ib{t},\Rb{t})\right)          \\
        =      & D_i \pm \bigO{k^{-1}}.\qedhere
    \end{align*}
\end{proof}

Doing some small adjustments to the proof of \Cref{thm:clique}, we get the same result for jumbled graphs instead of perfectly mixed graphs.

\begin{corollary}\label{cor:jumbled}
    Let $k \in \R_{>0}$ with $k\in \smallOmega{\ln(n)}$, $p \in [0,1]$ (possibly dependent on $n$) and let $C$ be a \IRIR process on a $(p,p n/k)$-jumbled graph $G$ with $n$ vertices. Let $C$ have constant recovery rates $\ra,\rb \in \R_{>0}$ and effective infection rates $\lae=\ca/n$ and $\lbe=\cb/n$ for constants $\ca,\cb \in \R_{>0}$. Let there be a constant $c\in \R_{>0}$ such that $\frac{\ra}{\lae} + \frac{\rb}{\lbe} < (1-c)n$. Let there be a small enough constant $\varepsilon_h \in \R_{>0}$ such that $\Ia{0}\geq \varepsilon_h n$ and $\Ib{0}\geq \varepsilon_h n$. Let $T_{\mathrm{d}}$ be the number of steps until the infections in $C$ die out. Then there exists a constant $c_T \in \R_{>0}$ such that for sufficiently large $n$
    \begin{align*}
        \Pr{T_{\mathrm{d}}\leq t} & \leq t^2 \cdot e^{-c_T\frac{n}{\ln(n)}}.\qedhere
    \end{align*}
\end{corollary}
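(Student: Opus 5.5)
We rerun the proof of \Cref{thm:clique} on $C$ with the same potential $\Lyapunovc$ and the same constants $c_l$, $c_h$, $\varepsilon_l$, $c_s$. The only ingredient that depends on the graph is the drift, and \Cref{lem:cliqueJumbled} handles it.

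\textbf{Step 1: graph-independent facts.} \Cref{lem:potentialBothBig} and \Cref{lem:potentialOneSmall} only involve the function $\Lyapunovc$ evaluated at the state counts. Note that $\frac{\ra}{\lae}+\frac{\rb}{\lbe} = \Rbe+\Rae$, so the hypothesis matches theirs. Hence both lemmas hold verbatim. In particular:
\begin{itemize}
\item $\Lyapunovc_0 \le c_l n$, because $\Ia{0},\Ib{0}\ge \varepsilon_h n$.
\item Whenever $\Lyapunovc < c_h n$, both $\Ia{t}$ and $\Ib{t}$ exceed $\varepsilon_l n$.
\end{itemize}
Likewise, \Cref{lem:differenceDerivative} and \Cref{lem:constantStep} bound the change of $\Lyapunovc$ when a single count changes by one. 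This depends only on the counts, not on which edge triggered the event. So every step changes $\Lyapunovc$ by at most $c_s$ in the region $\Lyapunovc<c_h n$. We then define $X_i$, $a$, $b$, $c$ exactly as in the proof of \Cref{thm:clique}. \Cref{item:negativeDrift:boundedStepSize,item:negativeDrift:noLargeJumps} of \Cref{pre:negativeDrift} follow unchanged.

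\textbf{Step 2: drift.} Fix a step $i$ with $X_i\ge a$ and $i<T_{\mathrm d}'$. Let $C$ be the comparison \IRIR process on a perfectly mixed graph with the same counts as $C^*$ at this step. The proof of \Cref{thm:clique} gives $D_i \le -c_d/\ln(n)$. \Cref{lem:cliqueJumbled} gives $D_i^* = D_i \pm \bigO{k^{-1}}$. Since $k\in\smallOmega{\ln(n)}$, the error is $\smallO{1/\ln(n)}$, so for sufficiently large $n$ we get $D_i^*\le -\frac{c_d}{2\ln(n)}$. Dividing by $c_s$ yields \cref{item:negativeDrift:drift} with $\varepsilon=-\frac{c_d}{2c_s\ln(n)}$.

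\textbf{Step 3: conclusion.} Applying \Cref{pre:negativeDrift} with $b\in\bigTheta{n}$ gives
\[
\Pr{T_{\mathrm d}'\le t}\le t^2\exp\bigl(-\Omega(n/\ln(n))\bigr).
\]
As before, $T_{\mathrm d}\ge T_{\mathrm d}'$, which gives the claim after renaming $c_T$.

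\textbf{Main obstacle.} The delicate point is checking that \Cref{lem:cliqueJumbled} really applies here. Two things need care.
\begin{itemize}
\item \emph{Edge-count error.} The rate $\lbe/p\cdot e^*(I_2,R_1)$ must equal $\lbe \Ib{t}\Ra{t}\pm\bigO{n/k}$. This uses the disjoint-set jumbled lemma: its error is $2(pn/k)\cdot 2n$, and after multiplying by $\lbe/p=\cb/(pn)$ this becomes $\bigO{n/k}$. The bound is uniform over states, which the union over steps requires.
\item \emph{Role of $k$.} The error must be $\smallO{1/\ln n}$ rather than merely $\bigO{1}$. This is exactly why the hypothesis $k\in\smallOmega{\ln n}$ is needed, instead of the $k\in\bigOmega{1}$ stated in the lemma.
\end{itemize}
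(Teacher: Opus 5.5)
Your proposal is correct and is essentially the paper's own proof. You rerun the argument of \Cref{thm:clique} unchanged except for the drift, and invoke \Cref{lem:cliqueJumbled} so that the $O(k^{-1})$ perturbation is $o(1/\ln n)$ for $k\in\omega(\ln n)$ and is absorbed by the $\Theta(1/\ln n)$ drift; your explicit edge-count check via the disjoint-set jumbled bound just unpacks what that lemma already does.
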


\begin{proof}
    The proof is almost the same as the proof for \Cref{thm:clique}, so we only highlight the changes. The only change is that the drift $D_i$ is slightly different. However, in the region considered in the proof, by \Cref{lem:cliqueJumbled} it only differs from the drift of a perfectly mixed graph by $\pm \bigO{k^{-1}}$ which, as $k\in \smallOmega{\ln(n)}$, does not change the bound used in the proof which is of order $\bigTheta{\ln(n)^{-1}}$.

    With this adjustment the proof of \Cref{thm:clique} applies for $(p,p n/k)$-jumbled graphs as well, resulting in exactly the same bound (except for the $c_T$ being a slightly different constant).
\end{proof}

We now plug in some value for $t$ and transfer the result to the continuous survival time of the process.

\jumbledContinuous*

\begin{proof}
    The idea is to use \Cref{cor:jumbled} and plug in an appropriate $t$ that is large but still gives a super-polynomially small probability of the discrete survival time being smaller than $t$. We then use the fact that each step is exponentially distributed together with \Cref{lem:sumExponential} to bound the actual survival time.

    By \Cref{cor:jumbled} there exists a constant $c_T \in \R_{>0}$ such that for all $t\in \N$ holds $\Pr{T_{\mathrm{d}}\leq t} \leq t^2 \cdot e^{-c_T\frac{n}{\ln(n)}}$. Choosing $t= e^{\frac{c_T n}{4 \ln(n)}}$ gives $\Pr{T_{\mathrm{d}}\leq e^{\frac{c_T n}{4 \ln(n)}}} \leq e^{-\frac{c_T n}{2 \ln(n)}}$.

    Note that the total rate $r$ of all clocks in the system is linear in $n$. Hence, while the rate of change at each step depends on the current state, we can define an exponential clock with rate $r$ for each step that are all independent such that the time of each step dominates the corresponding clock. Therefore $T$ dominates the sum of $T_{\mathrm{d}}$ exponential random variables of rate $r$ each. Let $X$ be that sum. We get

    \begin{align*}
        \Pr{T\leq e^{\frac{c_T n}{8 \ln(n)}}} \leq & \Pr{X\leq e^{\frac{c_T n}{8 \ln(n)}}}                                                                                                                               \\
        \leq                                       & \Pr{X\leq e^{\frac{c_T n}{8 \ln(n)}}}[T_{\mathrm{d}}\leq e^{\frac{c_T n}{4 \ln(n)}}]\cdot \Pr{T_{\mathrm{d}}\leq e^{\frac{c_T n}{4 \ln(n)}}}                        \\
                                                   & + \Pr{X\leq e^{\frac{c_T n}{8 \ln(n)}}}[T_{\mathrm{d}}> e^{\frac{c_T n}{4 \ln(n)}}]\cdot \Pr{T_{\mathrm{d}}> e^{\frac{c_T n}{4 \ln(n)}}}                            \\
        \leq                                       & \Pr{T_{\mathrm{d}}\leq e^{\frac{c_T n}{4 \ln(n)}}} + \Pr{X\leq e^{\frac{c_T n}{8 \ln(n)}}}[T_{\mathrm{d}}> e^{\frac{c_T n}{4 \ln(n)}}].\numberthis\label{eq:TUpper}
    \end{align*}

    We already bounded the first term, so it is left to bound the second one. The expected value of $X$ is $T_{\mathrm{d}}/r$, hence for sufficiently large $n$, $e^{\frac{c_T n}{8 \ln(n)}}$ is much smaller than half the expectation of $T_{\mathrm{d}}$ when $T_{\mathrm{d}}> e^{\frac{c_T n}{4 \ln(n)}}$. We get using \Cref{lem:sumExponential} that conditioned on $T_{\mathrm{d}}> e^{\frac{c_T n}{4 \ln(n)}}$ it holds

    \begin{align*}
        \Pr{X\leq e^{\frac{c_T n}{8 \ln(n)}}} \leq & \Pr{X \leq T_{\mathrm{d}}/2r}               \\
        \leq                                       & e^{-T_{\mathrm{d}}(1/2 -1 - \ln(1/2))}      \\
        \leq                                       & e^{-\frac{1}{6}e^{\frac{c_T n}{4 \ln(n)}}}.
    \end{align*}

    Plugging that into \Cref{eq:TUpper} gives

    \begin{align*}
        \Pr{T\leq e^{\frac{c_T n}{8 \ln(n)}}} \leq & \Pr{T_{\mathrm{d}}\leq e^{\frac{c_T n}{4 \ln(n)}}} + \Pr{X\leq e^{\frac{c_T n}{8 \ln(n)}}}[T_{\mathrm{d}}> e^{\frac{c_T n}{4 \ln(n)}}] \\
        \leq                                       & e^{-\frac{c_T n}{2 \ln(n)}} + e^{-\frac{1}{6}e^{\frac{c_T n}{4 \ln(n)}}}                                                               \\
        \leq                                       & 2e^{-\frac{c_T n}{2\ln(n)}}.\qedhere
    \end{align*}
\end{proof}

Using the fact that \erdosRenyi graphs are almost surely $(p',\sqrt{np})$-jumbled, we get almost surely super-polynomial survival times for them.

\begin{corollary}\label{cor:ErdosSurvival}
    Let $n \in \N$, let $k \in \R_{>0}$ with $k\in \smallOmega{\ln^2(n)}$ and $p = k/n \leq 0.99$ and let $C$ be a \IRIR process on a graph $G= G(n,p)$. Let $C$ have constant recovery rates $\ra,\rb \in \R_{>0}$ and effective infection rates $\lae=\ca/n$ and $\lbe=\cb/n$ for constants $\ca,\cb \in \R_{>0}$. Let there be a constant $c\in \R_{>0}$ such that $\frac{\ra}{\lae} + \frac{\rb}{\lbe} < (1-c)n$. Let there be a small enough constant $\varepsilon_h \in \R_{>0}$ such that $\Ia{0}\geq \varepsilon_h n$ and $\Ib{0}\geq \varepsilon_h n$. Let $T$ be the time until the infections in $C$ die out. Then there exists a constant $c_T \in \R_{>0}$ such that for sufficiently large $n$ almost surely
    \begin{align*}
        T & \in \bigOmega{e^{\frac{c_T n}{8 \ln(n)}}}.\qedhere
    \end{align*}
\end{corollary}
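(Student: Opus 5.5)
The plan is to reduce \Cref{cor:ErdosSurvival} to \Cref{cor:jumbledContinuous} by showing that, almost surely, $G(n,p)$ satisfies that corollary's jumbledness hypothesis with some $k' \in \smallOmega{\ln(n)}$. The argument conditions on the random graph and then applies the corollary to the process on the fixed graph obtained.

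\textbf{Step 1 (jumbledness).} Since $p = k/n \leq 0.99$, \Cref{lem:ErdosJumbled} shows that $G$ is almost surely $(p', \alpha)$-jumbled with $\alpha \in \bigO{\sqrt{np}} = \bigO{\sqrt{k}}$, where $p'$ is the realized density. We want to write $\alpha = p' n / k'$, that is, $k' = p' n/\alpha$. A Chernoff bound on the edge count gives $p' = p(1 \pm \smallO{1})$ almost surely, because the expected number of edges is $\bigTheta{nk}$ with $k \in \smallOmega{\ln^2(n)}$. Hence
\[
k' \in \bigOmega{\frac{k}{\sqrt{k}}} = \bigOmega{\sqrt{k}} \subseteq \smallOmega{\ln(n)},
\]
using $k \in \smallOmega{\ln^2(n)}$. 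This is exactly why the hypothesis on $k$ is squared. Since jumbledness is monotone in $\alpha$, we may enlarge $\alpha$ (and shrink $k'$) if needed to keep $\alpha \geq 1$.

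\textbf{Step 2 (rates).} The hypotheses refer to effective rates $\lae = \ca/n$ and $\lbe = \cb/n$. For the process on the jumbled graph, interpret these with respect to the density $p'$ of $G$, i.e.\ $\lambda_i = \lambda_i^*/p'$, which is how \Cref{lem:cliqueJumbled} uses them. If the statement instead fixes $\lambda_i^* = \lambda_i p$, then the relative error $p'/p - 1 = \smallO{1}$ perturbs $\ca, \cb$ by a $1 \pm \smallO{1}$ factor. The strict inequality $\frac{\ra}{\lae} + \frac{\rb}{\lbe} < (1-c)n$ then still holds with $c/2$ in place of $c$ for large $n$. The remaining hypotheses (constant recovery rates, $\Ia{0}, \Ib{0} \geq \varepsilon_h n$) carry over unchanged.

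\textbf{Step 3 (conclusion).} On the almost-sure event from Steps 1--2, \Cref{cor:jumbledContinuous} gives
\[
\Pr{T \leq \exp\bigl(\tfrac{c_T n}{8\ln(n)}\bigr)} \leq 2\exp\bigl(-\tfrac{c_T n}{2\ln(n)}\bigr) \to 0 .
\]
Combining this with the vanishing failure probability of the graph event, we get that with probability $1 - \smallO{1}$ over both graph and process, $T \geq \exp\bigl(\frac{c_T n}{8\ln(n)}\bigr)$. This is the claimed $T \in \bigOmega{\exp(c_T n/(8\ln(n)))}$, reading ``almost surely'' as with probability tending to one, as in \Cref{lem:ErdosJumbled}.

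The main obstacle is Step 1. We must confirm that the $\bigO{\sqrt{np}}$ from \Cref{lem:ErdosJumbled} indeed translates into the form $p'n/k'$ with $k' \in \smallOmega{\ln(n)}$, and that $p'$ concentrates around $p$ well enough. Both should follow from the stated growth $k \in \smallOmega{\ln^2(n)}$.
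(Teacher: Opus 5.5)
Your proposal is correct and follows the paper's own proof. Both use \Cref{lem:ErdosJumbled} together with $p' \in \bigTheta{p}$ to rewrite $\sqrt{np} = \sqrt{k}$ as $p'n/k'$ with $k' \in \bigTheta{\sqrt{k}} \subseteq \smallOmega{\ln(n)}$, and then invoke \Cref{cor:jumbledContinuous}. Your additional remarks only make explicit what the paper leaves implicit: the Chernoff concentration of $p'$, which density the effective rates refer to, and combining the graph and process failure probabilities.
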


\begin{proof}
    Let $p'$ be the density of $G$. Not that almost surely $p' \in \bigTheta{p}$. Using \Cref{lem:ErdosJumbled} we get that $G$ is almost surely $(p',\sqrt{np})$-jumbled. Doing some rearranging we get

    \begin{align*}
        \sqrt{np} & = \sqrt{k} = p \frac{\sqrt{k}}{p} = p n/\sqrt{k}.
    \end{align*}

    As almost surely $p' \in \bigTheta{p}$, $G$ is almost surely $(p',\bigTheta{p'n/\sqrt{k}})$-jumbled. The corollary statement follows from $k\in \smallOmega{\ln^2(n)}$ and applying \Cref{cor:jumbledContinuous}.
\end{proof}

We obtained a super-polynomial survival time lower bound for the case when $\frac{\ra}{\lae} + \frac{\rb}{\lbe} < (1-c)n$. In the next theorem we use an easier potential function together with the additive drift theorem to give a polynomial survival time upper bound if $\frac{\ra}{\lae} + \frac{\rb}{\lbe} > (1+c)n$, showing that this condition gives some kind of threshold behavior.

\fastDieOut*

\begin{proof}
    The idea of this proof is to define a simpler potential function than we needed for the other direction and then use an additive drift theorem (\Cref{pre:additiveDrift}) to bound the expected number of steps until one of the infection dies out. We then bound the time until the second infection also dies out separately. At last we obtain a survival time bound from the bound on the number of steps.

    Let $T_1'$ be the number of steps until one of the two infection dies out. Then for all $i \in \N_{<T_1'}$ we define the potential (defining $\ln(0)=-1$)
    \begin{align*}
        P_i & \coloneqq \frac{1}{\lae}(\ln(\Ia{\tau_i})+1) + \frac{1}{\lbe}(\ln(\Ib{\tau_i})+1).
    \end{align*}

    Let $r_i$ be the total rate of change at step $i$ and let $e(I_1,R_2)$ and $e(I_2,R_1)$ be the number of edges between vertices in these states respectively at step $i$. Note that $r_i \in \bigO{n}$ so let $c_r \in \R_{>0}$ be a constant such that $r_i < c_r n$ for all $i\in \N_{<T_1'}$. We now calculate the drift for all $i \in \N_{<T_1'}$ using $\frac{\ra}{\lae} + \frac{\rb}{\lbe} > (1+c)n$, $\Ra{\tau_i} + \Rb{\tau_i} \leq n$ and $\frac{1}{x+1} \leq \ln(x+1) - \ln(x) \leq \frac{1}{x}$ for all $x \in \N_{>0}$ (and $\frac{1}{x+1} \leq \ln(x+1) - \ln(x)$ for $x=0$ using our definition of $\ln(0)$). We get

    \begin{align*}
        \E{P_{i+1}-P_i} = & \frac{\la e(I_1,R_2)}{r_i}\left(\frac{1}{\lae}(\ln(\Ia{\tau_i}+1)- \ln(\Ia{\tau_i}))\right)      \\
                          & + \frac{\ra \Ia{\tau_i}}{r_i}\left(\frac{1}{\lae}(\ln(\Ia{\tau_i}-1)- \ln(\Ia{\tau_i}))\right)   \\
                          & + \frac{\lb e(I_2,R_1)}{r_i}\left(\frac{1}{\lbe}(\ln(\Ib{\tau_i}+1)- \ln(\Ib{\tau_i}))\right)    \\
                          & + \frac{\rb \Ib{\tau_i}}{r_i}\left(\frac{1}{\lbe}(\ln(\Ib{\tau_i}-1)- \ln(\Ib{\tau_i}))\right)   \\
        \leq              & \frac{\lae \Ia{\tau_i} \Rb{\tau_i}}{r_i}\left(\frac{1}{\lae}\cdot \frac{1}{\Ia{\tau_i}}\right)   \\
                          & + \frac{\ra \Ia{\tau_i}}{r_i}\left(\frac{1}{\lae}\cdot -\frac{1}{\Ia{\tau_i}}\right)             \\
                          & + \frac{\lbe \Ib{\tau_i} \Ra{\tau_i}}{r_i}\left(\frac{1}{\lbe}\cdot \frac{1}{\Ib{\tau_i}}\right) \\
                          & + \frac{\rb \Ib{\tau_i}}{r_i}\left(\frac{1}{\lbe}\cdot -\frac{1}{\Ib{\tau_i}}\right)             \\
        =                 & \frac{1}{r_i}\left(\Rb{\tau_i} - \frac{\ra}{\lae} + \Ra{\tau_i} - \frac{\rb}{\lbe}\right)        \\
        =                 & \frac{1}{r_i}\left(\Rb{\tau_i} + \Ra{\tau_i} - (\frac{\ra}{\lae} + \frac{\rb}{\lbe})\right)      \\
        \leq              & \frac{1}{r_i}\left(n-(1+c)n\right)                                                               \\
        =                 & -\frac{cn}{r_i}                                                                                  \\
        \leq              & -\frac{c}{c_r}.
    \end{align*}

    Furthermore note that for all $i \in \N_{<T_1'}$ holds $P_i \geq 0$ and $P_i\in \bigO{n \ln(n)}$. In particular $P_0 \in \bigO{n \ln(n)}$. We aim to apply \Cref{pre:additiveDrift}. However, the stopping time in this theorem is defined as the time until the potential hits $0$. Our stopping time might stop earlier than that. To make the drift theorem directly applicable we just redefine the potential of all states at which we stopped to be $0$. This just decreases the drift further and does not break anything else. Applying \Cref{pre:additiveDrift} we get

    \begin{align*}
        \E{T_1'} & \in \bigO{n \ln(n)}.
    \end{align*}

    Let $T_2'$ be the number of steps it takes for the second infection to die out after the first did. Let w.l.o.g. $I_1$ be the first infection to die out. Then after that no vertex can enter or leave the $I_1$ state anymore. As each vertex changes states deterministically in a circle, each vertex can change its state at most twice more before the infection dies out completely. Hence $T_2' \leq 2n$.

    While the infection did not die out yet, in each state the total rate of change $r_i$ is lower bounded by $\min(\ra,\rb)$. Hence the expected length of each step is at most $\min(\ra,\rb)^{-1}$ independent of all other steps. In particular, the time until the infection dies out dominates the sum of $(T_1'+T_2')$ independent exponential variables of expected size $\min(\ra,\rb)^{-1}$. Using Wald's equation (\Cref{lem:Wald}) we get that

    \begin{align*}
        \E{T} & \leq (T_1' + T_2')\cdot \min(\ra,\rb)^{-1} \in \bigO{n \ln(n)}.\qedhere
    \end{align*}
\end{proof}

\end{document}